\newcommand{\R}{\mathbb{R}}
\newcommand{\N}{\mathbb{N}}
\newcommand{\argmin}{{\mathrm{argmin}}}
\renewcommand{\d}{\mathrm{d}}
\newcommand{\mfd}{{\mathcal M}}
\newcommand{\partpt}[1]{\pi_{#1}}
\newcommand{\lift}[2]{#1^{\#}T#2}
\newcommand{\Rm}{{\mathrm{R}}} 
\newcommand{\linInt}{{\mathcal I^1_0}}
\newcommand{\constInt}{{\mathcal I^0_0}}
\newtheorem{theorem}{Theorem}
\newtheorem{proposition}[theorem]{Proposition}
\newtheorem{lemma}[theorem]{Lemma}
\newtheorem{corollary}[theorem]{Corollary}
\newtheorem{definition}[theorem]{Definition}
\newtheorem{remark}[theorem]{Remark}
\newcommand{\notinclude}[1]{}
\title{Quartic $L^p$-convergence of cubic Riemannian splines}
\author{Hanne Hardering \and Benedikt Wirth}
\date{}
\begin{document}

\maketitle

\begin{abstract}
We prove quartic convergence of cubic spline interpolation for curves into Riemannian manifolds as the grid size of the interpolation grid tends to zero.
In contrast to cubic spline interpolation in Euclidean space, where this result is classical, the interpolation operator is no longer linear.
Still, concepts from the linear setting may be generalized to the Riemannian case,
where we try to use intrinsic Riemannian formulations and avoid charts as much as possible.
\end{abstract}

\section{Introduction}
Let a smooth function $\gamma:[0,1]\to\R$ be given as well as a grid on $[0,1]$ of knots $t_i=ih$, $i=0,\ldots,N$,
where $h=\frac1N$ denotes the knot distance or grid size.
A standard approximation of $\gamma$ is obtained by evaluating $\gamma$ at the knots
and then computing the cubic spline interpolant $\gamma_h:[0,1]\to\R$ of the resulting data $(t_i,\gamma(t_i))$, $i=0,\ldots,N$,
for instance with Hermite boundary conditions $\dot\gamma_h(0)=\dot\gamma(0)$, $\dot\gamma_h(1)=\dot\gamma(1)$.
It is a classic result that this approximation is of quartic order, $\|\gamma_h-\gamma\|_{L^\infty}\leq ch^4$,
proved for instance by de Boor \cite{dB74} (analogous convergence results hold for spline interpolation of different order).

The well-known minimum curvature property of cubic splines states
that cubic spline interpolants uniquely minimize the accumulated squared acceleration $\int_0^1|\ddot\gamma_h(t)|^2\,\d t$ among all functions interpolating $(t_i,\gamma(t_i))$, $i=0,\ldots,N$,
which allows to generalize the notion of a cubic spline interpolant to functions or curves $\gamma:[0,1]\to\mfd$ with values in a smooth Riemannian manifold $\mfd$.

\begin{definition}[Riemannian cubic spline interpolation]\label{def:cubicSpline}
Let $\gamma:[0,1]\to\mfd$ be a continuous curve and set $t_i=ih$ for $i=0,\ldots,N$ and $h=\frac1N$.
A corresponding \emph{cubic spline interpolation} of $\gamma$ is defined as
\begin{equation*}
\gamma_h\in\argmin\left\{\int_0^1|D_t^2\gamma_h|^2\,\d t\,\middle|\,\gamma_h(t_i)=\gamma(t_i)\text{ for }i=0,\ldots,N\text{ and }\dot\gamma_h(t)=\dot\gamma(t)\text{ for }t=0,1\right\}.
\end{equation*}
\end{definition}

Above, $D_t^2\gamma_h$ denotes the intrinsic second derivative, the covariant derivative of $\dot\gamma_h$ along the curve $\gamma_h$,
and $|\cdot|$ represents the Riemannian norm on the tangent space bundle $T\mfd$.
A natural question is whether the approximation properties of cubic splines transfer from the setting of real-valued functions to the manifold-valued generalization.
Our main result is that they do.

\begin{theorem}[Quartic convergence of Riemannian cubic spline interpolation]\label{thm:mainResultCubic}
Let $\mfd$ be complete with bounded Riemann curvature and its derivatives.
Let $\gamma:[0,1]\to\mfd$ be four times differentiable.
For $h$ small enough (depending on $\mfd$ and $\gamma$)
\begin{enumerate}
\item\label{enm:wellPosednessCubic}
the cubic spline interpolation $\gamma_h$ exists and is unique,
\item
and it satisfies
\begin{equation*}
\|d(\gamma_h,\gamma)\|_{L^p}
\leq ch^4,
\end{equation*}
where the constant $c>0$ only depends on $\mfd$, $\gamma$, and $p\in[1,\infty]$.
\end{enumerate}
\end{theorem}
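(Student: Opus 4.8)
The plan is to linearize the variational problem about the target curve $\gamma$ and thereby reduce it to a perturbation of de~Boor's classical estimate. First I would record the Euler--Lagrange characterization of a minimizer: on each open subinterval $(t_{i-1},t_i)$ it solves the Riemannian cubic equation $D_t^3\dot\gamma_h + \Rm(D_t^2\gamma_h,\dot\gamma_h)\dot\gamma_h = 0$, it is $C^2$ across interior knots, interpolates the prescribed values, and meets the Hermite boundary conditions. The minimum-curvature property then supplies a cost-free a~priori bound: because $\gamma$ is itself admissible, $\int_0^1|D_t^2\gamma_h|^2\,\d t\le\int_0^1|D_t^2\gamma|^2\,\d t$. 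Combined with the interpolation constraints, which pin $\gamma_h$ to $\gamma$ at points spaced $h$ apart, this confines $\gamma_h$ to a tubular neighborhood of $\gamma$ of radius $O(h)$ once $h$ is small, so the whole analysis can be carried out intrinsically on the pullback bundle $\lift{\gamma}{\mfd}$ without recourse to charts.

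On that bundle I would write every competitor as $\gamma_h(t)=\exp_{\gamma(t)}(u(t))$ for a section $u$. The crucial simplification is that the interpolation constraint $\gamma_h(t_i)=\gamma(t_i)$ becomes the linear condition $u(t_i)=0$ and the Hermite conditions become $D_tu=0$ at the endpoints, so that $u=0$ (that is, $\gamma$ itself) is admissible. Transporting the energy gives a functional $E(u)=\int_0^1|D_t^2\exp_\gamma(u)|^2\,\d t$ on the linear space $V_0$ of admissible sections, and $\gamma_h$ corresponds to its critical point $u_h$. The point is that $u=0$ is admissible but \emph{not} critical, and $u_h$ is precisely the deviation we must bound.

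To estimate $u_h$ I would expand the criticality condition $DE(u_h)[v]=0$ (tested against $v\in V_0$) about $u=0$, obtaining to leading order the linear problem $(A_0+C)u=-DE(0)$ on $V_0$, where $A_0[u,v]=2\int_0^1\langle D_t^2u,D_t^2v\rangle\,\d t$ is the Euclidean cubic-spline form and $C$ is the curvature part of the second variation, built from $\Rm$ and its derivatives. The principal part of $-A_0^{-1}DE(0)$ is exactly the classical cubic-spline interpolation error of $\gamma$ read off in the parallel trivialization, which de~Boor's theorem bounds by $ch^4$; the curvature-generated forcing contributes at the same order because membership in $V_0$ forces, through a subinterval Poincar\'e inequality on intervals of length $h$, the scaling $\|u\|_{L^2}\le Ch^2\|D_t^2u\|_{L^2}$, so that an $O(1)$ forcing produces an $O(h^4)$ solution. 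Treating $C$ and the genuinely nonlinear (quadratic) remainder as perturbations of these already-small quantities, a contraction or implicit-function argument yields simultaneously the existence and uniqueness of $u_h$ near $0$ (item~\ref{enm:wellPosednessCubic}) and the bound $\|u_h\|_{L^p}\le ch^4$, i.e.\ $\|d(\gamma_h,\gamma)\|_{L^p}\le ch^4$ after passing back through $\exp_\gamma$.

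The main obstacle is the stability needed to upgrade energy-type control to the sharp $L^p$ rate uniformly in $h$. Energy coercivity of $A_0$ on $V_0$ is comparatively easy, and the subinterval Poincar\'e scaling even shows that $C$ is a small perturbation for small $h$; but de~Boor's decisive input in the scalar case is the stronger fact that the cubic-spline interpolation projector is bounded on $L^\infty$ by a constant independent of the mesh. What is delicate is to propagate this mesh-independent $L^\infty$-stability through the nonlinear, curvature-perturbed linearization so that the $O(h^4)$ estimate holds in $L^\infty$ and not merely in an energy norm. This is exactly where the hypotheses of bounded curvature and bounded covariant derivatives enter, and where keeping the computation intrinsic on $\lift{\gamma}{\mfd}$ via parallel transport---rather than in charts, whose transition terms would pollute the estimates---is what keeps the perturbation terms controllable. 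The full range $p\in[1,\infty]$ then follows by interpolating between the $L^\infty$ bound and the elementary $L^2$ estimate.
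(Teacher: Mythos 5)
You have chosen the strategy that the paper explicitly mentions in its introduction and then declines to follow: write $\gamma_h=\exp_{\gamma}(u)$, linearize the energy about $u=0$, and treat the problem as a curvature perturbation of de~Boor's Euclidean estimate. The outline is coherent up to and including the $L^2$ (energy-norm) estimate: the forcing $DE(0)[v]=2\int_0^1(D_t^2\gamma,\,D_t^2v+\Rm(v,\dot\gamma)\dot\gamma)\,\d t$ does split into the classical part, whose $A_0^{-1}$-image is the Euclidean interpolation error, and a curvature part that is $O(h^2)$ in the $H^2$-dual norm on $V_0$ by the subinterval Poincar\'e inequality, so coercivity of $A_0$ gives $\|u_h\|_{L^2}\le ch^4$. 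This is essentially equivalent to what the paper obtains in \cref{thm:quadraticConvergenceI} and the $L^2$ part of \cref{thm:BoundSecondDeriv}, just organized around $\gamma$ instead of around $\gamma_h$.

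The genuine gap is the step you yourself flag as ``delicate'' and then do not carry out: upgrading from the energy norm to $L^\infty$ uniformly in $h$. Invoking the mesh-independent $L^\infty$-boundedness of the Euclidean spline projector is not enough, because what you need is an $L^\infty$ stability estimate for the \emph{perturbed} operator $A_0+C$ on $V_0$, uniform in $h$; the perturbation $C$ is small in the energy norm but $L^\infty$-stability of spline projections is a much more rigid property (it rests on the exponential decay of $A^{-1}$ in de~Boor's argument), and you give no mechanism by which it survives the curvature terms and the quadratic remainder. This is precisely where the paper spends most of its effort: it proves a Galerkin-orthogonality-type $L^2$ bound for $D_t^2\gamma_h-\linInt D_t^2\gamma$ (\cref{thm:approximationErrorSecondDeriv}), upgrades it to $L^\infty$ via an inverse inequality at the cost of $h^{-1/2}$, absorbs the loss by a bootstrap to get $\|D_t^2\gamma_h\|_{L^\infty}\le 3\|D_t^2\gamma\|_{L^\infty}+ch^{3/2}$ (\cref{thm:BoundSecondDeriv}), and only then reduces to the Euclidean $L^\infty$ result by checking that $g=\pi_{\gamma_h}\log_{\gamma_h}\!\gamma$ has bounded fourth derivative and applying \cref{thm:perturbedSpline}. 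Some analogue of this chain (an $L^\infty$ bound on $D_t^2\gamma_h$ or on $D_t^2u_h$, obtained by an inverse estimate plus absorption) is indispensable and is missing from your proposal. Two smaller points: existence of a minimizer does not follow from a contraction/IFT argument alone (you get a critical point; you still need the direct method or convexity on the a~priori-bounded admissible set, as in \cref{thm:wellPosedness}); and the final remark about ``interpolating between $L^\infty$ and $L^2$'' is unnecessary, since on $[0,1]$ the $L^\infty$ bound already implies every $L^p$ bound.
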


Above, $d(\cdot,\cdot)$ denotes the Riemannian distance and $\|d(\gamma_h,\gamma)\|_{L^p}$ the $L^p$-norm of the function $t\mapsto d(\gamma_h(t),\gamma(t))$ for $t\in[0,1]$.
Note that the conditions for existence and uniqueness can be a little relaxed, see \cref{thm:wellPosedness}.
One could envision two strategies to arrive at this approximation result.
One strategy would be to reduce the problem to the real-valued case
by reformulating the Riemannian cubic spline interpolation as an interpolation problem in a linear space (using charts or a tangent space to the manifold)
which can be viewed as a perturbation of Euclidean or real-valued cubic spline interpolation.
The complementary strategy consists in the generalization of all ingredients to the Euclidean analysis to the Riemannian setting,
using a mostly coordinate-free intrinsic Riemannian formulation.
We will follow the latter strategy (but also discuss the former along the way).
In particular, the Euclidean proof in \cite{dB74} makes use of the fact
that the second derivative of the cubic spline interpolant $\gamma_h$ is the $L^2$-best approximation to the the second derivative of $\gamma$,
and for our proof in the nonlinear manifold setting we will quantify the deviation from a Riemannian generalization of that property.
This will result in an $L^\infty$-bound for the second derivative $D_t^2\gamma_h$, which is crucial for the convergence estimate.

For the sake of completeness, along the way we will deal with linear spline interpolation
since it is a widespread alternative for applications in Riemannian manifolds
and since its convergence properties follow without additional effort.

\begin{definition}[Riemannian linear spline interpolation]\label{def:linearSpline}
Let $\gamma:[0,1]\to\mfd$ be a continuous curve and set $t_i=ih$ for $i=0,\ldots,N$ and $h=\frac1N$.
A corresponding \emph{linear spline interpolation} of $\gamma$ is defined as
\begin{equation*}
\gamma_h\in\argmin\left\{\int_0^1|\dot\gamma_h|^2\,\d t\,\middle|\,\gamma_h(t_i)=\gamma(t_i)\text{ for }i=0,\ldots,N\right\}.
\end{equation*}
\end{definition}

\begin{theorem}[Quadratic convergence of Riemannian linear spline interpolation]\label{thm:mainResultLinear}
Let $\mfd$ be complete with bounded Riemann curvature and its derivatives.
Let $\gamma:[0,1]\to\mfd$ be twice differentiable.
For $h$ small enough (depending on $\mfd$ and $\gamma$)
\begin{enumerate}
\item\label{enm:wellPosednessLinear}
the linear spline interpolation $\gamma_h$ exists and is unique,
\item
and it satisfies
\begin{equation*}
\|d(\gamma_h,\gamma)\|_{L^p}
\leq ch^2,
\end{equation*}
where the constant $c>0$ only depends on $\mfd$, $\gamma$, and $p\in[1,\infty]$.
\end{enumerate}
\end{theorem}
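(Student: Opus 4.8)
The plan is to exploit the fact that the energy $\int_0^1|\dot\gamma_h|^2\,\d t$ decouples across the subintervals $[t_i,t_{i+1}]$, since the only constraints are the interpolation conditions at the knots. On each subinterval the restriction of a minimizer $\gamma_h$ must therefore minimize $\int_{t_i}^{t_{i+1}}|\dot\gamma_h|^2\,\d t$ with its endpoints pinned to $\gamma(t_i)$ and $\gamma(t_{i+1})$, and the unique such minimizer is the constant-speed minimizing geodesic between these two data points. Thus any linear spline interpolation is piecewise geodesic, and the whole problem reduces to an interval-by-interval analysis.

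For the well-posedness claim \ref{enm:wellPosednessLinear} I would first observe that $\gamma([0,1])$ is compact, so by completeness and the bounds on the curvature there is a uniform convexity radius $\rho>0$ on a tubular neighborhood of this image. Since $\gamma$ is $C^1$ on the compact interval, $d(\gamma(t_i),\gamma(t_{i+1}))\le\|\dot\gamma\|_{L^\infty}h$, which falls below $\rho$ once $h$ is small. Each pair of consecutive data points then lies in a geodesically convex ball, in which the connecting minimizing geodesic exists and is unique; concatenating the pieces gives existence and uniqueness of $\gamma_h$.

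The heart of the proof is the pointwise error estimate, which I would carry out in normal (or otherwise uniformly controlled) charts covering $\gamma([0,1])$, with $C^2$-bounds on the metric and Christoffel symbols $\Gamma^k_{ij}$ that are uniform thanks to the curvature assumptions. Writing the coordinate representations $x(t)$ of $\gamma_h$ and $y(t)$ of $\gamma$ on a single subinterval, $x$ solves the geodesic equation $\ddot x^k+\Gamma^k_{ij}(x)\dot x^i\dot x^j=0$ while $y$ solves $\ddot y^k+\Gamma^k_{ij}(y)\dot y^i\dot y^j=a^k$, where $a$ is the coordinate expression of the acceleration $D_t\dot\gamma$, bounded say by $M$. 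Both speeds $\dot x,\dot y$ are $O(1)$: $\dot\gamma$ is bounded on the compact interval, and $\gamma_h$ is a constant-speed geodesic of speed $d(\gamma(t_i),\gamma(t_{i+1}))/h\le\|\dot\gamma\|_{L^\infty}$. The error $e\colonequals y-x$ satisfies $e(t_i)=e(t_{i+1})=0$ and, subtracting the two equations and applying the mean value theorem to the quadratic-in-velocity Christoffel terms, $\ddot e=a+O(|e|+|\dot e|)$ with constants controlled by the uniform chart bounds.

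The final step is a Green's-function estimate for this two-point boundary value problem on an interval of length $h$. Combining $\|e\|_{L^\infty}\le\frac{h^2}{8}\|\ddot e\|_{L^\infty}$ and $\|\dot e\|_{L^\infty}\le\frac h2\|\ddot e\|_{L^\infty}$ with $\|\ddot e\|_{L^\infty}\le M+C(\|e\|_{L^\infty}+\|\dot e\|_{L^\infty})$, the self-referential lower-order terms are absorbed for $h$ small, yielding $\|e\|_{L^\infty}\le Ch^2$ on each subinterval with a constant independent of $i$. Local equivalence of the coordinate and Riemannian distances then gives $\sup_t d(\gamma_h(t),\gamma(t))\le ch^2$, i.e.\ the $L^\infty$-bound, from which the $L^p$-bound for every $p\in[1,\infty]$ follows immediately because $[0,1]$ has finite measure. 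I expect the main obstacle to be organizing the uniform constants — in particular the uniform convexity radius and the uniform chart bounds — so that the per-interval estimate holds with a single constant across all $N=1/h$ subintervals. The intrinsic, chart-free reformulation of the comparison via the Jacobi equation for the variation field $W(t,s)=\partial_s\exp_{\gamma_h(t)}(s\exp_{\gamma_h(t)}^{-1}\gamma(t))$ is attractive, but it requires parallel transport to compare tangent vectors living in different fibers, so I would invoke it only as an alternative route to the same bound.
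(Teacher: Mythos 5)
Your proposal is correct, and the well-posedness part coincides with the paper's (\cref{thm:wellPosedness}: piecewise constant-speed minimizing geodesics, unique once consecutive data points are closer than the injectivity/convexity radius on the compact image of $\gamma$). The convergence part, however, takes the chart-based perturbation route that the paper explicitly mentions in the introduction and then deliberately avoids. The paper argues intrinsically: it applies the transported Euclidean approximation result (\cref{thm:approximationLemma}) to the vector field $v=\log_{\gamma_h}\!\gamma$, which vanishes at the knots, to get $\|d(\gamma_h,\gamma)\|_{L^p(I)}\leq ch^2\|D_t^2\log_{\gamma_h}\!\gamma\|_{L^p(I)}$ (\cref{thm:quadraticConvergence}), and then bounds $D_t^2\log_{\gamma_h}\!\gamma$ by expanding it through the derivatives of the bivariate logarithm (\cref{thm:logDerivs}) and using $D_t^2\gamma_h=0$ together with $\|\dot\gamma_h\|_{L^\infty}\leq\|\dot\gamma\|_{L^\infty}$ (\cref{thm:boundFirstDeriv}, \cref{thm:quadraticConvergenceII}). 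Your version instead subtracts the coordinate ODEs $\ddot x^k+\Gamma^k_{ij}(x)\dot x^i\dot x^j=0$ and $\ddot y^k+\Gamma^k_{ij}(y)\dot y^i\dot y^j=a^k$ and absorbs the $O(|e|+|\dot e|)$ terms via the two-point boundary value estimates on an interval of length $h$; this is sound, since the uniform chart bounds, the containment of both curves in a single chart per subinterval, and the uniform equivalence of coordinate and Riemannian distance all follow from compactness of $\gamma([0,1])$ and the curvature bounds, and the self-referential inequality $\|\ddot e\|_{L^\infty}\leq M+C(h^2+h)\|\ddot e\|_{L^\infty}$ closes for small $h$. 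What each approach buys: yours is more elementary and self-contained for the linear case, needing neither the Jacobi-field estimates behind \cref{thm:logDerivs} nor the interpolation machinery; the paper's intrinsic formulation pays off because the same approximation lemma, logarithm expansion, and velocity bounds are exactly the ingredients reused (with a Galerkin-orthogonality and bootstrapping argument on $D_t^2\gamma_h$) for the quartic cubic-spline result, where a chart-based Taylor comparison would become, as the paper itself remarks, quite cumbersome. Two cosmetic points: the constant in $\|\dot e\|_{L^\infty}\leq\tfrac h2\|\ddot e\|_{L^\infty}$ should be $h$ in general (Rolle gives a zero of $\dot e$ somewhere in the interval, not at its midpoint), which does not affect the absorption argument; and the uniform injectivity-radius lower bound should be stated as holding on a compact neighborhood of $\gamma([0,1])$, since completeness plus bounded curvature alone does not give it globally.
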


Higher order spline interpolation of odd degree could in principle be tackled in a similar way as cubic spline interpolation, but of course requires more technical estimates.

In \cref{sec:notation} we will summarize the required basic Riemannian notions and our notation
after which we prove the well-posedness of (linear and) cubic spline interpolation in \cref{sec:wellPosedness} and derive the corresponding optimality conditions.
We then present two convergence proofs for the classical Euclidean setting in \cref{sec:Euclidean}
before adapting these to the Riemannian setting in \cref{sec:convergence}.

\section{Required Riemannian notions and preliminaries}\label{sec:notation}
Throughout we will assume $\mfd$ to be a smooth, complete, connected, finite-dimensional Riemannian manifold with Riemannian metric $g$
such that the Riemann curvature tensor and all its derivatives are bounded.
The tangent space at a point $p\in\mfd$ will be denoted $T_p\mfd$, and for $v,w\in T_p\mfd$ we will write
\begin{equation*}
(v,w)=g_p(v,w)
\qquad\text{as well as}\qquad
|v|=\sqrt{(v,v)},
\end{equation*}
where the explicit dependence on the base point $p$ is suppressed for better readability.
The induced Riemannian distance is denoted by $d:\mfd\times\mfd\to[0,\infty)$,
the Levi-Civita connection on $\mfd$ is denoted by $\nabla$,
and the Riemann curvature tensor is denoted by $\Rm$.
For an absolutely continuous curve $\gamma:[0,1]\to\mfd$ we will denote the parameterization variable by $t$
and its velocity by $\partial_t\gamma(t)=\dot\gamma(t)\in T_{\gamma(t)}\mfd$
(time derivatives of functions into a vector space will also be denoted with powers of $\partial_t$ or dots).
A lift of the curve $\gamma$ is a mapping $v$ from $[0,1]$ into the tangent bundle $T\mfd$ such that $v(t)\in T_{\gamma(t)}\mfd$ for all $t\in[0,1]$;
the vector space of all liftings is abbreviated $\lift\gamma\mfd$.
For a vector field $v\in\lift\gamma\mfd$ along the curve $\gamma$ and $k\in\N$ we will write
\begin{equation*}
D_t^kv=\nabla_{\dot\gamma}^kv,
\end{equation*}
where with a slight misuse of notation the covariant derivative of $v$ along $\gamma$ shall be
\begin{equation*}
\nabla_{\dot\gamma}v(t)=\lim_{\epsilon\to0}\tfrac1\epsilon(\partpt{\gamma}v(t+\epsilon)-v(t))
\end{equation*}
with $\partpt\gamma$ being the parallel transport along $\gamma$ from $\gamma(t+\epsilon)$ to $\gamma(t)$
(start and end point of the parallel transport along a curve will usually be clear from the context).
We will also write
\begin{equation*}
D_t^k\gamma=\nabla_{\dot\gamma}^{k-1}\dot\gamma
\end{equation*}
for $k\in\N$.
A family of curves $\gamma(s,\cdot):[0,1]\to\mfd$ will be parameterized by the variable $s\in\R$,
and analogously to before, for a family of vector fields $v(s,\cdot)$ along the curves $\gamma(s,\cdot)$ we introduce the notation
\begin{equation*}
D_sv(s,t)=\nabla_{\partial_s\gamma}v(s,t)=\lim_{\epsilon\to0}\tfrac1\epsilon(\pi_{\gamma(\cdot,t)}v(s+\epsilon,t)-v(s,t)),
\end{equation*}
where $\pi_{\gamma(\cdot,t)}$ is parallel transport along the curve $s\mapsto\gamma(s,t)$.
Below we collect a few classical Riemannian calculus rules to be used in the sequel.

\begin{lemma}[Differentiation rules]\label{thm:rules}
\begin{enumerate}
\item\label{enm:transportedDerivative}
Parallel transport along a curve commutes with differentiation along the curve in the following sense.
Let $\gamma:[0,1]\to\mfd$ be differentiable and denote by $\pi_\gamma:T_{\gamma(t)}\mfd\to T_{\gamma(0)}\mfd$ the parallel transport along $\gamma$,
then for any differentiable $v\in\lift\gamma\mfd$ we have
\begin{equation*}
\tfrac\d{\d t}\pi_\gamma v=\pi_\gamma D_tv.
\end{equation*}
\item
Let $\gamma:\R\times[0,1]\to\mfd$ be a differentiable family of differentiable curves, then
$D_s\partial_t\gamma=D_t\partial_s\gamma$.
\item
Let $\gamma:\R\times[0,1]\to\mfd$ be a differentiable family of differentiable curves and $v\in\lift\gamma\mfd$ differentiable, then
$D_sD_tv=D_tD_sv+\Rm(\partial_s\gamma,\partial_t\gamma)v$.
\end{enumerate}
\end{lemma}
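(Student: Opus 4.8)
The plan is to dispatch all three rules as standard consequences of the defining properties of the Levi-Civita connection, proving the first intrinsically from the definition of $D_t$ and the latter two by a one-line computation in a local coordinate chart. For the first rule I would exploit the composition property of parallel transport, $\tpt{t+\epsilon}{0}=\tpt{t}{0}\circ\tpt{t+\epsilon}{t}$, together with the fact that each $\tpt{t}{0}$ is a fixed linear isometry $T_{\gamma(t)}\mfd\to T_{\gamma(0)}\mfd$. Writing $\partpt\gamma v(t)=\tpt{t}{0}v(t)$ and forming the difference quotient of $\frac{\d}{\d t}\partpt\gamma v$ gives
\begin{equation*}
\tfrac1\epsilon\bigl(\tpt{t+\epsilon}{0}v(t+\epsilon)-\tpt{t}{0}v(t)\bigr)
=\tpt{t}{0}\,\tfrac1\epsilon\bigl(\tpt{t+\epsilon}{t}v(t+\epsilon)-v(t)\bigr).
\end{equation*}
Letting $\epsilon\to0$, the inner difference quotient converges to $D_tv(t)$ by the very definition of the covariant derivative along $\gamma$, and since $\tpt{t}{0}$ is linear and continuous it may be pulled through the limit, yielding $\frac{\d}{\d t}\partpt\gamma v=\tpt{t}{0}D_tv=\partpt\gamma D_tv$.

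For the second and third rules I would pass to a coordinate chart around a point $\gamma(s,t)$ and use the coordinate form of the covariant derivative with Christoffel symbols $\Gamma^k_{ij}$. The $k$-th component of $D_s\partial_t\gamma$ reads $\partial_s\partial_t\gamma^k+\Gamma^k_{ij}(\gamma)\,\partial_s\gamma^i\,\partial_t\gamma^j$, and that of $D_t\partial_s\gamma$ is obtained by exchanging $s$ and $t$; equality then follows from the symmetry of mixed partials $\partial_s\partial_t\gamma^k=\partial_t\partial_s\gamma^k$ and the torsion-freeness $\Gamma^k_{ij}=\Gamma^k_{ji}$, which is exactly the intrinsic statement that the torsion vanishes on the commuting coordinate fields $\partial_s,\partial_t$. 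For the third rule I would differentiate the coordinate expression of $D_tv$ once more in $s$; the second-derivative terms are symmetric in $s,t$ and cancel against $D_tD_sv$, while the remaining combination of Christoffel symbols and their derivatives assembles precisely into $\Rm(\partial_s\gamma,\partial_t\gamma)v$. Equivalently, this is the defining property $\Rm(X,Y)Z=\nabla_X\nabla_YZ-\nabla_Y\nabla_XZ-\nabla_{[X,Y]}Z$ of the curvature tensor applied to the connection pulled back along $\gamma$, with the bracket term dropping out because $[\partial_s,\partial_t]=0$.

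None of the three identities is deep: they are the standard compatibility, symmetry, and curvature identities for the Levi-Civita connection, and I expect the write-up to be short. The only genuinely delicate point is in the first rule, where the interchange of the $\epsilon\to0$ limit with the transport $\tpt{t}{0}$ must be justified from continuity, and where one must keep careful track of which parallel transport appears in each difference quotient (the infinitesimal $T_{\gamma(t+\epsilon)}\mfd\to T_{\gamma(t)}\mfd$ versus the fixed $T_{\gamma(t)}\mfd\to T_{\gamma(0)}\mfd$). The coordinate computations for the remaining two are routine once the index bookkeeping is fixed.
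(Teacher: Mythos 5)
Your proposal is correct, and for the first rule it takes a genuinely different route from the paper. The paper proves $\tfrac\d{\d t}\pi_\gamma v=\pi_\gamma D_tv$ by duality: it pairs $\pi_\gamma v$ with an arbitrary fixed $w\in T_{\gamma(0)}\mfd$, uses that parallel transport is an isometry to rewrite $(\pi_\gamma v,w)=(v,\pi_\gamma^{-1}w)$, applies metric compatibility of the connection, and kills the term $(v,D_t\pi_\gamma^{-1}w)$ because $\pi_\gamma^{-1}w$ is parallel along $\gamma$. Your argument instead works directly from the paper's own limit definition of $\nabla_{\dot\gamma}v$, factoring the difference quotient through the composition property $\tpt{t+\epsilon}{0}=\tpt{t}{0}\circ\tpt{t+\epsilon}{t}$ and pulling the fixed linear isometry $\tpt{t}{0}$ through the limit. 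Both are complete; yours is arguably the more economical given how the paper defines $D_t$, since it needs neither metric compatibility nor the product rule, only the semigroup property of transport and continuity of a fixed linear map on a finite-dimensional space. For the second and third rules the substance is the same as the paper's: the paper argues intrinsically that torsion-freeness gives $D_s\partial_t\gamma=D_t\partial_s\gamma+[\partial_s\gamma,\partial_t\gamma]$ with a vanishing bracket of coordinate fields, and then invokes the defining identity of $\Rm$ with the bracket term again dropping out; your coordinate computation with symmetric Christoffel symbols and symmetry of mixed partials is exactly this argument in local coordinates, and you even note the intrinsic formulation as the equivalent. Nothing is missing; the only point to watch, which you already flag, is keeping the infinitesimal transport $T_{\gamma(t+\epsilon)}\mfd\to T_{\gamma(t)}\mfd$ distinct from the fixed transport to $T_{\gamma(0)}\mfd$ in the first rule.
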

\begin{proof}
\begin{enumerate}
\item
For all $w\in T_{\gamma(0)}\mfd$ we have
\begin{equation*}
(\tfrac\d{\d t}\pi_\gamma v,w)
=\tfrac\d{\d t}(\pi_\gamma v,w)
=\tfrac\d{\d t}(v,\pi_\gamma^{-1}w)
=(D_tv,\pi_\gamma^{-1}w)+(v,D_t\pi_\gamma^{-1}w)
=(D_tv,\pi_\gamma^{-1}w)
=(\pi_\gamma D_tv,w).
\end{equation*}
\item
Since the Levi-Civita connection is torsion-free, we have $D_s\partial_t\gamma=D_t\partial_s\gamma+[\partial_s\gamma,\partial_t\gamma]$ for the Lie bracket $[\cdot,\cdot]$ of vector fields.
Now for any smooth function $f:\mfd\to\R$ we have
\begin{equation*}
[\partial_s\gamma,\partial_t\gamma]f=\partial_s\gamma(\partial_t\gamma(f))-\partial_t\gamma(\partial_s\gamma(f))=\partial_s\partial_tf\circ\gamma-\partial_t\partial_sf\circ\gamma=0.
\end{equation*}
\item
We have
\begin{equation*}
\Rm(\partial_s\gamma,\partial_t\gamma)v=D_sD_tv-D_tD_sv-\nabla_{[\partial_s\gamma,\partial_t\gamma]}v,
\end{equation*}
where the last term is zero as shown in the previous point.
\qedhere
\end{enumerate}
\end{proof}

For a Lebesgue measurable function $v:I\to T\mfd$ on some measurable domain $I$ we denote its $L^p$-norm for $p\in[1,\infty]$ by
\begin{equation*}
\|v\|_{L^p}=\|\,|v|\,\|_{L^p(I)}.
\end{equation*}
If the domain is not clear from the context, we will write $\|v\|_{L^p(I)}$ instead.
Similarly, for a function $v\in\lift\gamma\mfd$ we introduce its H\"older norm of exponent $\alpha\in(0,1]$ by
\begin{equation*}
\|v\|_{C^{0,\alpha}}=\|v\|_{L^\infty}+\sup_{r,t\in[0,1]}\frac{|\partpt\gamma v(t)-\partpt\gamma v(r)|}{|t-r|^\alpha},
\end{equation*}
where $\partpt\gamma$ is parallel transport to $\gamma(0)$ along $\gamma$.

Finally, the Riemannian exponential with base point $p\in\mfd$ is denoted by $\exp_p:T_p\mfd\to\mfd$.
Letting $\rho$ be the injectivity radius of $\mfd$, the inverse of $\exp_p$ can be defined on the geodesic ball $B_\rho(p)\subset\mfd$ of radius $\rho$
and is called the Riemannian logarithm $\log_p:B_\rho(p)\to T_p\mfd$.
We close by providing some bounds on the Riemannian logarithm and exponential.

\begin{lemma}[Bounds on Riemannian logarithm and exponential]\label{thm:logDerivs}
Let $\mfd$ be a $C^{k+1}$-manifold, $k\geq2$, with injectivity radius $\rho$, sectional curvature bounded from above by $K$, and bounds $\|\Rm\|_\infty$ on $\Rm$ and $\|\nabla\Rm\|_\infty$ on $\nabla\Rm$. Then the bivariate mapping $\log(p,q)=\log_{p}q$ is in $C^{k}$ for $d(p,q)<\rho$. If $K>0$, assume additionally $d(p,q)\leq \frac{\pi}{2\sqrt{K}}$. The operator norms of the first derivatives of $\log$ satisfy
\begin{align*}
\|\partial_{1}\log_{p}q+\mathrm{Id}_{T_{p}M}\|&\leq \frac{\|\Rm\|_{L^{\infty}}}{2}d^{2}(p,q),\\
\|\partial_{2}\log_{p}q-\pi_{q\to p}\|&\leq \frac{\|\Rm\|_{L^{\infty}}}{2}d^{2}(p,q),
\end{align*}
where $\pi_{q\to p}$ denotes parallel transport along the geodesic from $q$ to $p$. The covariant second order derivatives, denoted by $D^{2}$, satisfy
\begin{align*}
\|D^{2}\log_{p}q\|\leq C(\|\Rm\|_\infty,\|\nabla\Rm\|_\infty) d(p,q)
\end{align*}
for a constant $C(\|\Rm\|_\infty,\|\nabla\Rm\|_\infty)$ depending only on $\|\Rm\|_\infty$ and $\|\nabla\Rm\|_\infty$.
Similarly, the bivariate mapping $\exp(p,v)=\exp_pv$ is in $C^k$, and for $|v|$ small enough, depending only on $\rho$, $K$, and $\|\Rm\|_\infty$, $\|\nabla\Rm\|_\infty$, it satisfies
\begin{align*}
\|\partial_{2}\exp_{p}v-\pi_{p\to q}\|&\leq C(\|\Rm\|_\infty,\|\nabla\Rm\|_\infty)|v|^2,\\
\|\partial_{2}^2\exp_{p}v\|&\leq C(\|\Rm\|_\infty,\|\nabla\Rm\|_\infty)|v|.
\end{align*}
\end{lemma}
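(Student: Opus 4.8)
The plan is to express all derivatives of $\exp$ and $\log$ through Jacobi fields and to estimate these by reducing the Jacobi equation to a linear ODE in the fixed vector space $T_p\mfd$ via parallel transport. Recall the classical fact that for fixed $p$ and $v=\log_pq$ the differential of the exponential in its second argument is $(\partial_2\exp_pv)w=J_w(1)$, where $J_w$ is the Jacobi field along the geodesic $\gamma(t)=\exp_p(tv)$ with $J_w(0)=0$ and $D_tJ_w(0)=w$; dually, $\partial_2\log_pq$ maps $J(1)\mapsto D_tJ(0)$ for Jacobi fields vanishing at $t=0$, so it is the inverse of $\partial_2\exp_pv$. First I would record the regularity claim: since geodesics solve an ODE whose right-hand side is as smooth as the metric, $\exp$ is $C^k$ on a $C^{k+1}$-manifold, and $\log$ is $C^k$ by the inverse function theorem, which applies because within the injectivity radius (and, for $K>0$, under the stated bound $d(p,q)\leq\frac{\pi}{2\sqrt K}$ excluding conjugate points) $\partial_2\exp_p$ is invertible.

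For the first-order bounds I would set $\hat J(t)=\pi_{\gamma(t)\to p}J(t)\in T_p\mfd$ and $\hat R(t)=\pi_{\gamma(t)\to p}\Rm(\cdot,\dot\gamma)\dot\gamma\,\pi_{p\to\gamma(t)}$, so that $\hat J''=-\hat R\hat J$ with $\|\hat R\|\leq\|\Rm\|_\infty|v|^2$ because $|\dot\gamma|\equiv|v|=d(p,q)$. Writing this as the integral equation $\hat J(t)=\hat J(0)+t\,\hat J'(0)-\int_0^t(t-s)\hat R(s)\hat J(s)\,\d s$ and applying Gronwall's inequality bounds $\sup_t|\hat J|$ by the boundary data up to a factor $1+O(d^2)$. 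Evaluating the integral equation at $t=1$ and isolating the relevant term gives, for the $\partial_2\exp$ case ($\hat J(0)=0$, $\hat J'(0)=w$), the estimate $|\hat J(1)-w|\leq\|\Rm\|_\infty|v|^2\,(\sup_t|\hat J|)\int_0^1(1-s)\,\d s$; the weight $\int_0^1(1-s)\,\d s=\frac12$ is exactly what produces the constant $\frac{\|\Rm\|_\infty}{2}d^2$. The bound on $\partial_2\exp_pv-\pi_{p\to q}$ follows directly, and the bound on $\partial_2\log_pq-\pi_{q\to p}$ follows by inverting through a Neumann series, using that parallel transport is an isometry. The base-point derivative $\partial_1\log$ can be treated either by the same computation with Jacobi boundary data $\hat J(0)=\dot p$, $\hat J(1)=0$ (which reproduces the flat identity $\partial_1\log=-\mathrm{Id}$ to leading order), or, more economically, by differentiating the symmetry relation $\log_qp=-\pi_{p\to q}\log_pq$ and reducing to the already-established $\partial_2$ bound.

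The second-order estimates are the technical core and, I expect, the main obstacle. Here I would differentiate the Jacobi system once more with respect to the relevant endpoint, so that the second variation field solves an inhomogeneous Jacobi equation whose source is quadratic in the first-order Jacobi fields and linear in $\nabla\Rm$. The point is that this source vanishes when $p=q$ (respectively $v=0$), which forces the linear-in-$d(p,q)$ (respectively linear-in-$|v|$) decay claimed for $D^2\log$ and $\partial_2^2\exp$; a further Gronwall estimate on the inhomogeneous equation then yields the constants $C(\|\Rm\|_\infty,\|\nabla\Rm\|_\infty)$.

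The delicate bookkeeping is that $\log_pq$ takes values in the base-point-dependent space $T_p\mfd$, so $D^2$ is a genuinely covariant second derivative, and the commutator rules of \cref{thm:rules} (in particular $D_sD_t-D_tD_s=\Rm(\partial_s\gamma,\partial_t\gamma)$) must be used to convert mixed coordinate derivatives into covariant ones, generating additional curvature terms that have to be absorbed into the constant. Keeping track of these terms, rather than any single estimate, is the hard part.
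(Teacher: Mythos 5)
Your outline is sound and follows the same basic strategy as the sources the paper relies on: the paper does not prove \cref{thm:logDerivs} itself but cites Jacobi-field estimates (\cite{hardering_diss}, \cite{karcher:1977}) for the logarithm and obtains the exponential bounds from those by the inverse function theorem. You run the argument in the opposite direction: you estimate $\partial_2\exp_pv$ directly from the initial value problem $\hat J(0)=0$, $\hat J'(0)=w$ and then invert via a Neumann series to get $\partial_2\log$. Both directions work, but the cited order of proof is not arbitrary. Two points deserve attention. First, the Neumann-series inversion of $\mathrm{Id}+E$ with $\|E\|\leq\frac{\|\Rm\|_\infty}{2}d^2$ only yields $\|(\mathrm{Id}+E)^{-1}-\mathrm{Id}\|\leq\frac{\|E\|}{1-\|E\|}$, i.e.\ the stated constant $\frac{\|\Rm\|_\infty}{2}d^2$ only up to a higher-order correction; likewise your Gronwall bound $\sup_t|\hat J|\leq|w|(1+O(d^2))$ already pollutes the constant. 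To obtain the clean constant one estimates the logarithm side directly, bounding $\sup_{t\in[0,1]}|\hat J(t)|$ by the boundary value $|\hat J(1)|$ via the infinitesimal Rauch comparison $|J(t)|\leq\frac{\sin(\sqrt K|v|t)}{\sin(\sqrt K|v|)}|J(1)|$ --- and this is precisely where the hypothesis $d(p,q)\leq\frac{\pi}{2\sqrt K}$ for $K>0$ enters, a hypothesis your sketch never uses. (For the downstream purposes of this paper the exact constant is immaterial, but as a proof of the lemma as stated the Neumann-series step falls slightly short.) Second, your ``more economical'' route to $\partial_1\log$ via differentiating $\log_qp=-\pi_{p\to q}\log_pq$ in $p$ is not free: the parallel transport $\pi_{p\to q}$ along the connecting geodesic also varies with $p$, and its derivative must itself be controlled by a Jacobi-field/holonomy estimate of order $\|\Rm\|_\infty d^2$; the direct boundary-value computation with $\hat J(0)=w$, $\hat J(1)=0$ is the cleaner option. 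Your treatment of the second-order bounds (differentiate the Jacobi equation, observe that the inhomogeneity carries at least one factor of $\dot\gamma$ or of $\nabla\Rm$ contracted with $\dot\gamma$, and that the degenerate initial data kill the homogeneous part, forcing the $O(d(p,q))$ decay) is the right mechanism, matching what is done in \cite{hardering_diss}; as you say yourself, only the bookkeeping remains.
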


The proof for the derivatives of the logarithm follows from Jacobi field estimates and can be found in \cite[Prop.\,A.1-A.2]{hardering_diss} (the estimates for the first order derivatives can also be found in \cite{karcher:1977}).
The estimates for the derivatives of the exponential are then straightforward applications of the inverse function theorem.

%

\section{Well-posedness and Euler--Lagrange equations of Riemannian spline interpolations}\label{sec:wellPosedness}

All throughout the article, $\gamma:[0,1]\to\mfd$ will be a given curve,
$k$ times differentiable ($k=2$ for linear and $k=4$ for cubic spline interpolation)
in the sense that $\gamma$ is continuous and $D_t^l\gamma$ is well-defined and bounded for $l=1,\ldots,k$.
Furthermore, $\gamma_h:[0,1]\to\mfd$ will denote the spline interpolation according to \cref{def:cubicSpline} or \cref{def:linearSpline} at points $t_i=ih$, $i=0,\ldots,N$, where $h=\frac1N$.

\begin{remark}[Boundary conditions for Riemannian cubic spline interpolation]
In \cref{def:cubicSpline} we chose to impose the so-called Hermite boundary conditions $\dot\gamma_h(t)=\dot\gamma(t)\text{ for }t=0,1$.
An alternative would be to solely require $\gamma_h=\gamma$ at all interpolation points.
This is known to result in so-called natural boundary conditions, essentially a vanishing acceleration of $\gamma_h$ at $t=0$ and $t=1$.
Unfortunately, though, existence of cubic spline interpolations with natural boundary conditions cannot be guaranteed due to lacking control of the curve velocity
(a simple example is provided in \cite{HeRuWi19}).
\end{remark}

We begin with straightforward a priori bounds that follow from the coercivity of the spline interpolation energy,
which is nothing else but $\|\dot\gamma_h\|_{L^2}^2$ in the case of linear splines and $\|D_t^2\gamma_h\|_{L^2}^2$ in case of cubic splines.
Boundedness of higher derivatives for the cubic spline interpolation will be proven later.

\begin{proposition}[A priori bounds]\label{thm:aPrioriBounds}
Let $\gamma:[0,1]\to\mfd$ be an absolutely continuous curve.
\begin{enumerate}
\item
If $\|\dot\gamma\|_{L^2}^2$ is bounded, then $\gamma$ is H\"older continuous with exponent $\frac12$, and for any $r\in[0,1]$ we have
\begin{equation*}
\|d(\gamma,\gamma(r))\|_{C^{0,\frac12}}
\leq2\|\dot\gamma\|_{L^2}.
\end{equation*}
\item\label{enm:aPrioriBoundsCubic}
If $\|D_t^2\gamma\|_{L^2}^2$ is bounded, then $\gamma$ is even H\"older continuously differentiable with exponent $\frac12$ and satisfies
\begin{align*}
\|\dot\gamma\|_{C^{0,\frac12}}
&\leq|\dot\gamma(0)|+2\|D_t^2\gamma\|_{L^2},\\
d(\gamma(t),\gamma(r))
&\leq|t-r|\|\dot\gamma\|_{L^\infty}\quad\text{for all }r,t\in[0,1].
\end{align*}
\end{enumerate}
\end{proposition}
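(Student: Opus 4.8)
The plan is to prove both statements via the fundamental theorem of calculus applied to the appropriate covariant derivatives, combined with the Cauchy--Schwarz inequality to convert $L^2$-control into pointwise and H\"older control. The key observation throughout is that parallel transport is a linear isometry between tangent spaces, so it preserves Riemannian norms and, by \cref{thm:rules}\eqref{enm:transportedDerivative}, commutes with covariant differentiation along the curve. This lets me transport vector fields to a single fixed tangent space, say $T_{\gamma(0)}\mfd$, where the computation reduces to ordinary vector-valued calculus.

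For the first part, I would start from the representation of the Riemannian distance in terms of the curve's length: since $\gamma$ restricted to the interval between $r$ and $t$ is a curve joining $\gamma(r)$ to $\gamma(t)$, we have $d(\gamma(t),\gamma(r))\leq\int_r^t|\dot\gamma|\,\d\tau$, and then Cauchy--Schwarz gives $\int_r^t|\dot\gamma|\,\d\tau\leq|t-r|^{1/2}\|\dot\gamma\|_{L^2}$, which already yields the H\"older seminorm bound with constant $\|\dot\gamma\|_{L^2}$. For the $L^\infty$-part of the $C^{0,1/2}$-norm of the function $t\mapsto d(\gamma(t),\gamma(r))$, I note that $d(\gamma(t),\gamma(r))\leq|t-r|^{1/2}\|\dot\gamma\|_{L^2}\leq\|\dot\gamma\|_{L^2}$ since $|t-r|\leq1$; adding the two contributions gives the factor $2$. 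A mild subtlety is that the H\"older seminorm in the definition uses parallel transport of vectors, but here the quantity $d(\gamma,\gamma(r))$ is already a scalar function, so the seminorm is just the ordinary scalar H\"older seminorm and no transport is needed.

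For the second part, I would apply the same idea one derivative higher. Writing $w(t)=\partpt\gamma\dot\gamma(t)\in T_{\gamma(0)}\mfd$ for the parallel transport of the velocity to the base point, \cref{thm:rules}\eqref{enm:transportedDerivative} gives $\dot w(t)=\partpt\gamma D_t^2\gamma(t)$, so that $w(t)=w(0)+\int_0^t\partpt\gamma D_t^2\gamma\,\d\tau$. Since parallel transport is an isometry, $|w(0)|=|\dot\gamma(0)|$ and the integrand has norm $|D_t^2\gamma|$. Taking norms and using Cauchy--Schwarz as before bounds $|w(t)|$, hence $|\dot\gamma(t)|=|w(t)|$, by $|\dot\gamma(0)|+|t|^{1/2}\|D_t^2\gamma\|_{L^2}\leq|\dot\gamma(0)|+\|D_t^2\gamma\|_{L^2}$, giving the $L^\infty$-contribution. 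For the H\"older seminorm of $\dot\gamma$, which now genuinely involves transported vectors, I compute $\partpt\gamma\dot\gamma(t)-\partpt\gamma\dot\gamma(r)=w(t)-w(r)=\int_r^t\partpt\gamma D_t^2\gamma\,\d\tau$, take norms, and apply Cauchy--Schwarz to get the seminorm bound $\|D_t^2\gamma\|_{L^2}$; summing with the $L^\infty$-part produces the stated estimate with its factor $2$. Finally, the second displayed inequality follows immediately from $d(\gamma(t),\gamma(r))\leq\int_r^t|\dot\gamma|\,\d\tau\leq|t-r|\|\dot\gamma\|_{L^\infty}$.

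I do not anticipate a serious obstacle here, as the argument is essentially the classical embedding of $H^1$ into $C^{0,1/2}$ in disguise. The only points requiring care are the correct handling of parallel transport in the H\"older seminorm for the vector-valued case (ensuring the commutation rule is applied to the transported field rather than the raw field) and the bookkeeping of the constant $2$, which arises from separately estimating the $L^\infty$-norm and the seminorm and using $|t-r|\leq1$.
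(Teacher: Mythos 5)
Your proposal is correct and follows essentially the same route as the paper: the fundamental theorem of calculus combined with the Cauchy--Schwarz/H\"older inequality, with \cref{thm:rules}\eqref{enm:transportedDerivative} used to transport the velocity field to a fixed tangent space before integrating, and the factor $2$ arising from adding the $L^\infty$-part (via $|t-r|\leq1$) to the seminorm part. The only step you leave implicit is the reverse triangle inequality $|d(\gamma(t),\gamma(r))-d(\gamma(\tilde t),\gamma(r))|\leq d(\gamma(t),\gamma(\tilde t))$ needed for the scalar H\"older seminorm in part 1, which the paper spells out explicitly.
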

\begin{proof}
\begin{enumerate}
\item
For any $t,r\in[0,1]$, by H\"older's inequality we have
\begin{equation*}
d(\gamma(t),\gamma(r))
\leq\int_{r}^t|\dot\gamma(\tau)|\,\d \tau
\leq\||\dot\gamma|\|_{L^2}\sqrt{|t-r|}
=\|\dot\gamma\|_{L^2}\sqrt{|t-r|}
\end{equation*}
so that $\gamma$ is indeed H\"older continuous with exponent $2$.
Furthermore, this implies
\begin{multline*}
\sup_{t\in[0,1]}d(\gamma(t),\gamma(r))
+\sup_{t,\tilde t\in[0,1]}\frac{|d(\gamma(t),\gamma(r))-d(\gamma(\tilde t),\gamma(r))|}{\sqrt{|t-\tilde t|}}
\leq\|\dot\gamma\|_{L^2}
+\sup_{t,\tilde t\in[0,1]}\frac{d(\gamma(t),\gamma(\tilde t))}{\sqrt{|t-\tilde t|}}
\leq2\|\dot\gamma\|_{L^2},
\end{multline*}
from which the estimate follows.
\item
Letting $\pi_{\gamma}$ denote parallel transport along $\gamma$ to $T_{\gamma(r)}\mfd$ we can analogously estimate
\begin{equation*}
|\pi_{\gamma}\dot\gamma(t)-\dot\gamma(r)|
=\left|\int_{r}^t\frac\d{\d \tau}\pi_{\gamma}\dot\gamma(\tau)\,\d \tau\right|
=\left|\int_{r}^t\pi_{\gamma}D_\tau\dot\gamma(\tau)\,\d \tau\right|
\leq\sqrt{|t-r|}\|\pi_{\gamma}D_t\dot\gamma\|_{L^2}
=\sqrt{|t-r|}\|D_t^2\gamma\|_{L^2}
\end{equation*}
by \cref{thm:rules}\eqref{enm:transportedDerivative} and H\"older's inequality. Thus
\begin{equation*}
\sup_{t\in[0,1]}|\dot\gamma(t)|
+\sup_{t,r\in[0,1]}\frac{|\pi_{\gamma}\dot\gamma(t)-\dot\gamma(r)|}{\sqrt{|t-r|}}
\leq|\dot\gamma(0)|+\sqrt t\|D_t^2\gamma\|_{L^2}+\|D_t^2\gamma\|_{L^2},
\end{equation*}
which is the first norm bound.
The second bound trivially follows from $d(\gamma(t),\gamma(r))\leq\int_r^t|\dot\gamma(\tau)|\,\d\tau$.
\qedhere
\end{enumerate}
\end{proof}

Obviously, an immediate consequence is that the linear spline interpolation $\gamma_h$ of $\gamma$, if it exists, is H\"older continuous with
\begin{equation}\label{eqn:closenessLinear}
d(\gamma_h(t),\gamma_h(t_i))
\leq2\sqrt{|t-t_i|}\|\dot\gamma\|_{L^2}\quad\text{for all }t\in[0,1],i\in\{1,\ldots,N\}
\end{equation}
and that the cubic spline interpolation, if it exists, is H\"older continuously differentiable with
\begin{align}\label{eqn:velocityCubic}
\|\dot\gamma_h\|_{C^{0,\frac12}}
&\leq|\dot\gamma(0)|+2\|D_t^2\gamma\|_{L^2},\\
d(\gamma_h(t),\gamma(t_i))
&\leq|t-t_i|\|\dot\gamma\|_{L^\infty}\quad\text{for all }t\in[0,1],i\in\{1,\ldots,N\}.
\label{eqn:closenessCubic}
\end{align}

We proceed by proving the well-posedness of spline interpolation, \cref{thm:mainResultLinear}\eqref{enm:wellPosednessLinear} and \cref{thm:mainResultCubic}\eqref{enm:wellPosednessCubic}.

\begin{proposition}[Well-posedness of Riemannian spline interpolation]\label{thm:wellPosedness}
Linear and cubic spline interpolations $\gamma_h$ of $\gamma:[0,1]\to\mfd$ exist.
Furthermore, they are unique if $\gamma$ is continuous (for linear spline interpolation) or has finite $\|D_t^2\gamma\|_{L^2}$ (for cubic spline interpolation)
and $h$ is small enough (depending on $\gamma$ and $\mfd$).
\end{proposition}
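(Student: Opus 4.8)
The plan is to treat existence and uniqueness separately, and to handle the linear and cubic cases in parallel where possible, since both reduce to a variational problem over the admissible set of interpolants with prescribed values (and, for cubic splines, prescribed boundary velocities).

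For \emph{existence}, I would use the direct method of the calculus of variations. First I would exhibit a competitor with finite energy, so the infimum is finite: for the linear case, the piecewise geodesic through the data points works; for the cubic case, a piecewise Riemannian cubic (e.g.\ a Hermite-type interpolant matching the boundary velocities) gives finite $\|D_t^2\gamma_h\|_{L^2}^2$. Next I would take a minimizing sequence $(\gamma_h^n)$ and invoke the a priori bounds of \cref{thm:aPrioriBounds}: bounded energy forces a uniform $C^{0,\frac12}$ bound (linear case) or a uniform $C^{1,\frac12}$ bound on the velocity together with the Lipschitz bound \eqref{eqn:closenessCubic} (cubic case). By the Arzel\`a--Ascoli theorem the curves (and, in the cubic case, the parallel-transported velocities) converge uniformly along a subsequence to a limit curve $\gamma_h$ that still satisfies the interpolation and boundary constraints, using completeness of $\mfd$ to guarantee the limit lands in the manifold. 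Finally I would argue lower semicontinuity of the energy under this convergence—here one must be slightly careful because $D_t^2\gamma_h$ involves the connection along a varying curve, so I would pass to parallel-transported quantities (using \cref{thm:rules}\eqref{enm:transportedDerivative}) to reduce the functional to a weakly lower semicontinuous quadratic form of a vector-valued derivative, and then conclude that the uniform limit is a minimizer.

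For \emph{uniqueness}, the obstacle is that the interpolation operator is nonlinear, so the clean convexity argument of the Euclidean setting is unavailable globally. The natural strategy is local-to-global convexity for small $h$. From the a priori bounds, any minimizer is confined to a tube of radius $O(\sqrt h)$ (linear) or $O(h)$ (cubic) around the data, by \eqref{eqn:closenessLinear} and \eqref{eqn:closenessCubic}; hence on each subinterval the candidate interpolant stays in a geodesically convex neighborhood whose size is controlled by $\rho$, $K$, and $\|\Rm\|_\infty$. Within such neighborhoods, the squared-distance function and the energy integrand become uniformly convex along geodesic variations once the curvature-induced correction terms are dominated, which happens precisely when $h$ is small enough. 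I would make this quantitative by computing the second variation of the energy along a geodesic homotopy between two hypothetical minimizers: the Euclidean (flat) part is strictly positive definite on the space of admissible variations (the one vanishing at all knots, and additionally with vanishing endpoint velocities in the cubic case), while the curvature terms from the Jacobi-field estimates in \cref{thm:logDerivs} and from \cref{thm:rules} contribute corrections of order $O(h^2)$ times the energy. For $h$ small these corrections are absorbed, the second variation stays positive, and two distinct minimizers would contradict strict convexity along the connecting variation.

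The main obstacle I anticipate is the uniqueness step, specifically making the second-variation computation rigorous in the manifold setting: one must differentiate the covariant second derivative $D_t^2\gamma_h$ twice along a family of curves, which brings in commutator and curvature terms via \cref{thm:rules}\eqref{enm:transportedDerivative} and the third differentiation rule, and then bound these uniformly using the confinement estimates and \cref{thm:logDerivs}. The bookkeeping that shows the curvature corrections scale like $h^2$ (rather than $O(1)$) relative to the positive flat term is the crux; everything else is a relatively standard adaptation of the Euclidean direct method and convexity argument.
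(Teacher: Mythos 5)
Your overall strategy matches the paper's in outline (direct method plus parallel transport for existence, smallness of $h$ plus convexity for uniqueness), but your uniqueness step takes a genuinely different technical route. The paper does not compute a second variation along a pointwise geodesic homotopy in $\mfd$; instead it rewrites each candidate minimizer segmentwise in normal coordinates $v_i(t)=\log_{\gamma_i}\gamma_h(t)$, $\gamma_i=\gamma(t_{i-1})$, so that the energy becomes a functional $J[v_1,\ldots,v_N]$ on a \emph{convex} subset $\mathcal A_h$ of a linear space, defined by the a priori bounds $\|v_i\|_{L^\infty}\leq Ch$, $\|\dot v_i\|_{L^\infty}\leq 2C$, $\|\ddot v_i\|_{L^2}^2\leq 2E[\gamma]+2$ together with the interpolation constraints; it then shows $\partial^2J>0$ on $\mathcal A_h$ using exactly the scaling estimates you anticipate ($\|\varphi_i\|_{L^\infty}\leq ch^{3/2}\|\ddot\varphi_i\|_{L^2}$, etc.; the dominant curvature correction there is in fact only $O(h^{3/2})$ rather than your claimed $O(h^2)$, which still suffices). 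The chart-based formulation buys precisely the point where your intrinsic version has a gap: strict convexity must hold along the \emph{entire} path joining two minimizers, and in $\mathcal A_h$ the defining derivative bounds are preserved automatically under convex combination, whereas the intermediate curves of a pointwise geodesic homotopy are not minimizers, so the bounds on $\partial_t\gamma(s,\cdot)$ and $D_t^2\gamma(s,\cdot)$ for $0<s<1$ — which you need in order to dominate the curvature terms in your second variation — do not follow from minimality and would require a separate Jacobi-field-type argument. Your existence sketch is essentially the paper's: it extracts a weak $H^1$ limit $\beta$ of the transported velocities $\pi_{\gamma_h^n}\dot\gamma_h^n$ in the fixed space $T_{\gamma(0)}\mfd$ and then \emph{reconstructs} the limit curve by solving $\dot\gamma_h=\pi_{\gamma_h}^{-1}\beta$ with Picard--Lindel\"of, using Gronwall to check that the interpolation constraints survive the limit — a step worth making explicit in your argument, since the parallel transport is taken along a varying curve. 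For the linear case the paper bypasses all of this: the minimizers are segmentwise constant-speed geodesics, which exist by completeness and are unique once consecutive data points are close enough.
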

\begin{proof}
Obviously, a linear spline interpolation is obtained by minimizing for each $i=1,\ldots,N$ the energy
\begin{equation*}
\int_{t_{i-1}}^{t_i}|\dot\gamma_h|^2\,\d t
\end{equation*}
among all curve segments $\gamma_h:[t_{i-1},t_i]\to\mfd$ that satisfy $\gamma_h(t_{i-1})=\gamma(t_{i-1})$ and $\gamma_h(t_i)=\gamma(t_i)$.
Those curve segments are known to be exactly the constant speed-parameterized geodesics connecting $\gamma(t_{i-1})$ and $\gamma(t_i)$,
which always exist on a complete Riemannian manifold and which are unique if the end points are close enough to each other (depending on $\mfd$).
By uniform continuity of $\gamma$ the latter condition will be fulfilled for all $i=1,\ldots,N$ as soon as $h$ is small enough.

As for the well-posedness of the cubic spline interpolation,
consider a minimizing sequence $\gamma_h^n$, $n=1,2,\ldots$, of curves satisfying the interpolation conditions
such that the cubic spline energy $E[\gamma_h^n]=\|D_t^2\gamma_h^n\|_{L^2}^2$ converges monotonically to the infimum,
$E[\gamma_h^n]\to\inf\{E[\gamma_h]\,|\,\gamma_h(t_i)=\gamma(t_i)\text{ for }i=0,\ldots,N\text{ and }\dot\gamma_h(t)=\dot\gamma(t)\text{ for }t=0,1\}$.
Letting $\pi_{\tilde\gamma}$ denote parallel transport to $T_{\tilde\gamma(0)}\mfd$ along a curve $\tilde\gamma$, we have
\begin{equation*}
\|\tfrac\d{\d t}\pi_{\gamma_h^n}\dot\gamma_h^n\|_{L^2}^2
=\|\pi_{\gamma_h^n}D_t^2\gamma_h^n\|_{L^2}^2
=\|D_t^2\gamma_h^n\|_{L^2}^2
=E[\gamma_h^n]
\end{equation*}
so that the functions $\pi_{\gamma_h^n}\dot\gamma_h^n:[0,1]\to T_{\gamma(0)}\mfd$ have uniformly bounded $H^1$-seminorm.
Together with $\pi_{\gamma_h^n}\dot\gamma_h^n(0)=\dot\gamma_h^n(0)=\dot\gamma(0)$ it follows by Poincar\'e's inequality that the functions even have uniformly bounded $H^1$-norm
so that a subsequence converges weakly in $H^1$ (and thus strongly in $C^0([0,1])$) to some function $\beta:[0,1]\to T_{\gamma(0)}\mfd$.
Now define $\gamma_h:[0,1]\to\mfd$ as the solution to the ordinary differential equation
\begin{equation*}
\dot\gamma_h(t)=\pi_{\gamma_h}^{-1}\beta(t)
\end{equation*}
with initial value $\gamma_h(0)=\gamma(0)$, where we write $\pi_{\gamma_h}^{-1}\beta(t)$ for the parallel transport of $\beta(t)$ to $\gamma_h(t)$ along $\gamma_h$.
The solution $\gamma_h:[0,1]\to\mfd$ exists and is unique by the theorem of Picard and Lindel\"of, and it satisfies the interpolation conditions by Gronwall's lemma.
Indeed, let us express all quantities in local coordinates $\R^d$ and indicate this by a hat.
Further, let $\hat\Pi:[0,1]\to\R^{d\times d}$ denote the matrix representation in local coordinates of $\pi_{\gamma_h}^{-1}$.
Then $\hat\gamma_h$ solves the initial value problem
\begin{multline*}
\frac\d{\d t}{\hat\gamma_h\choose\hat\Pi}(t)
=F(t,\hat\gamma_h(t),\hat\Pi(t)),\;
{\hat\gamma_h\choose\hat\Pi}(0)
={\hat\gamma_h(0)\choose\mathrm{Id}}\\
\quad\text{for }
F:\R\times\R^d\times\R^{d\times d}\to\R^d\times\R^{d\times d},\;
F(t,\hat\gamma_h,\hat\Pi)={\hat\Pi\hat\beta(t)\choose-\Gamma_{\hat\gamma_h}(\hat\Pi\hat\beta(t),\hat\Pi)},
\end{multline*}
where $\Gamma_{\hat\gamma_h}:\R^d\times\R^d\to\R^d$ is the Christoffel operator at coordinates $\hat\gamma_h$
and the second entry of $F(t,\hat\gamma_h,\hat\Pi)$ refers to the matrix defined by $\R^d\ni\xi\mapsto\Gamma_{\hat\gamma_h}(\hat\Pi\hat\beta(t),\hat\Pi\xi)\in\R^d$.
The function $F$ is obviously Lipschitz so that the Picard--Lindel\"of theorem can be applied
(it is straightforward to show that $\hat\gamma_h$ and $\hat\Pi$ stay bounded for $t\in[0,1]$).
\notinclude{
\begin{multline*}
\frac\d{\d t}{\hat\gamma_h\choose\hat\alpha}(t)
=F(t,\hat\gamma_h(t),\hat\alpha(t)),\;
{\hat\gamma_h\choose\hat\alpha}(0)
={\hat\gamma_h(0)\choose\hat\beta}\\
\quad\text{for }
F:\R\times\R^d\times C^0([0,1])^d\to\R^d\times C^0([0,1])^d,\;
F(t,\hat v,\hat w)={\hat w(t)\choose\Gamma_{\hat v}(\hat w,\hat w(t))},
\end{multline*}
where $\Gamma_{\hat v}:\R^d\times\R^d\to\R^d$ is the Christoffel operator at coordinates $\hat v$
and the second entry of $F(t,\hat v,\hat w)$ refers to the function $s\mapsto\Gamma_{\hat v}(\hat w(s),\hat w(t))$.
The function $F$ is obviously Lipschitz so that the Picard--Lindel\"of theorem can be applied
(it is straightforward to show that $\hat\gamma_h$ and $\hat\alpha$ stay bounded for $t\in[0,1]$).
}
Since $\hat\gamma_h^n$ satisfies the same initial value problem, only with $\hat\beta$ replaced by $\widehat{\pi_{\gamma_h^n}\dot\gamma_h^n}$,
the curves $\hat\gamma_h^n$ converge uniformly on $[0,1]$ to $\hat\gamma_h$ by Gronwall's lemma, as do the corresponding $\hat\alpha$.
Consequently, $\hat\gamma_h$ satisfies the interpolation conditions.
Finally note that
\begin{equation*}
E[\gamma_h]
=\|\pi_{\gamma_h}D_t^2\gamma_h\|_{L^2}^2
=\|\tfrac\d{\d t}\pi_{\gamma_h}\dot\gamma_h\|_{L^2}^2
=\|\dot\beta\|_{L^2}^2
\leq\liminf_{n\to\infty}\|\tfrac\d{\d t}\pi_{\gamma_h^n}\dot\gamma_h^n\|_{L^2}^2
=\liminf_{n\to\infty}E[\gamma_h^n]
\end{equation*}
due to the sequential weak lower semi-continuity of the $L^2$-norm so that $\gamma_h$ minimizes $E$ under the interpolation constraints.

It remains to prove uniqueness of $\gamma_h$.
To this end we will show local convexity of the problem for $h$ small enough.
Indeed, for $h$ small enough the interpolation problem can equivalently be formulated in terms of functions
\begin{equation*}
v_i:[t_{i-1},t_i]\to T_{\gamma_i}\mfd,
\quad
v_i(t)=\log_{\gamma_i}\gamma_h(t),\;i=1,\ldots,N,
\end{equation*}
where we abbreviated $\gamma_i=\gamma(t_{i-1})$
(equivalently, $\gamma_h(t)=\exp_{\gamma_i}v_i(t)$ for $t\in[t_{i-1},t_i]$).
This is possible since $E[\gamma_h]\leq E[\gamma]$ and thus by \cref{thm:aPrioriBounds}\eqref{enm:aPrioriBoundsCubic},
\begin{equation*}
d(\gamma_h(t),\gamma_i)
\leq h\|\dot\gamma\|_{L^\infty}
\leq h\left(|\dot\gamma(0)|+2\|D_t^2\gamma\|_{L^2}\right)
\quad\text{for }t\in[t_{i-1},t_i]
\end{equation*}
so that for $h$ small enough the Riemannian logarithm in the definition of $v_i$ is uniquely defined and smooth.
Furthermore, again by \cref{thm:aPrioriBounds}\eqref{enm:aPrioriBoundsCubic}, the $v_i$ satisfy
\begin{align*}
\|[\partial_2\exp_{\gamma_i}v_i]\dot v_i\|_{C^{0,\frac12}}
&\leq C
\quad\text{for }C=|\dot\gamma(0)|+2\|D_t^2\gamma\|_{L^2},\\
\|v_i\|_{L^\infty}
&\leq Ch,
\end{align*}
for $i=1,\ldots,N$, which by \cref{thm:logDerivs} implies $\|\dot v_i\|_{L^\infty}\leq2C$ for $h$ small enough.
Now introduce
\begin{equation*}
J[v_1,\ldots,v_N]
=\sum_{i=1}^N\int_{t_{i-1}}^{t_i}\left|[\partial_2\exp_{\gamma_i}v_i(t)]\ddot v_i(t)+[\partial_2^2\exp_{\gamma_i}v_i(t)](\dot v_i(t),\dot v_i(t))\right|^2\d t,
\end{equation*}
which equals $E[\gamma_h]$.
Due to the above bounds on $\|v_i\|_{L^\infty}$ and $\|\dot v_i\|_{L^\infty}$ we can apply \cref{thm:logDerivs}
to obtain $E[\gamma]\geq E[\gamma_h]=J[v_1,\ldots,v_N]\geq\frac12\|\ddot v_i\|_{L^2}^2-1$ for $h$ small enough.
Thus, the cubic spline interpolation problem in terms of the $v_i$ can thus be written as
\begin{align*}
&\!\min\left\{J[v_1,\ldots,v_N]\,\middle|\,(v_1,\ldots,v_N)\in\mathcal A_h\right\}
\quad\text{with }\\
&\mathcal A_h=\big\{(v_1,\ldots,v_N)\in L^\infty((t_0,t_1))\times\ldots\times L^\infty((t_{N-1},t_N))\,\big|\,
\|v_i\|_{L^\infty}\leq Ch,\,\|\dot v_i\|_{L^\infty}\leq2C,\,\|\ddot v_i\|_{L^2}^2\leq2E[\gamma]+2,\\
&\hspace*{57ex}v_i(t_{i-1})=0,\,v_i(t_i)=\log_{\gamma_i}\gamma(t_i)\text{ for }i=1,\ldots,N\\
&\hspace*{52ex}\text{and }\dot v_1(0)=\dot\gamma(0),\,[\partial_2\exp_{\gamma(t_{N-1})}v_N(1)]\dot v_N(1)=\dot\gamma(1)\big\}.
\end{align*}
It is obvious that $\mathcal A_h$ is convex, so it remains to show convexity of $J$ on $\mathcal A_h$.
\notinclude{
Now consider a minimizing sequence $(v_1^k,\ldots,v_N^k)$, $k=1,2,\ldots$, of functions satisfying the interpolation constraints
such that $J[v_1^k,\ldots,v_N^k]\to\inf E$ monotonically as $k\to\infty$.
Without loss of generality we may assume $J[v_1^1,\ldots,v_N^1]\leq J[\log_{\gamma(0)}\gamma,\ldots,\log_{\gamma(t_{N-1})}\gamma]=\|D_t^2\gamma\|_{L^2}^2$.
Furthermore, for $h$ small enough, by \cref{thm:logDerivs} the energy is coercive in the sense
\begin{equation*}
J[v_1,\ldots,v_N]
\geq\frac12\sum_{i=1}^N\|\ddot v_i\|_{L^2}^2
\quad\text{whenever }(v_1,\ldots,v_N)\in\mathcal A_h
\end{equation*}
Thus, for a subsequence (not renamed) we have weak convergence $(v_1^k,\ldots,v_N^k)\rightharpoonup(v_1^\infty,\ldots,v_N^\infty)$ in $H^2((t_0,t_1))\times\ldots\times H^2((t_{N-1},t_N))$ as $k\to\infty$
and even $(v_1^k,\ldots,v_N^k)\to(v_1^\infty,\ldots,v_N^\infty)$ strongly in $C^{1,\frac12}([t_0,t_1])\times\ldots\times C^{1,\frac12}([t_{N-1},t_N])$ by Sobolev embedding.
The limit $(v_1,\ldots,v_N)$ thus also satisfies the interpolation conditions.
It remains to show lower semi-continuity of $J$ along the sequence and uniqueness of $(v_1^\infty,\ldots,v_N^\infty)$.
To this end we will show convexity of $J$ on $\mathcal A_h$.
It is obvious that the interpolation conditions in $J$ are convex, so it remains to show convexity of the sum of integrals.
}
Abbreviating $\gamma_v(t)=\exp_{\gamma_i}v_i(t)$ for $t\in[t_{i-1},t_i]$,
the G\^ateaux derivatives of $J$ in a direction $\phi\equiv(\varphi_1,\ldots,\varphi_N)$ with $\varphi_i(t_{i-1})=\varphi(t_i)=0$ and $\dot\varphi_1(0)=\dot\varphi_N(1)=0$ are
\begin{align*}
&\partial J[v_1,\ldots,v_N](\phi)\\
&=2\sum_{i=1}^N\int_{t_{i-1}}^{t_i}(D_t^2\gamma_v,\underbrace{[\partial_2\exp_{\gamma_i}v_i]\ddot\varphi_i+[\partial_2^2\exp_{\gamma_i}v_i](\ddot v_i,\varphi_i)+[\partial_2^3\exp_{\gamma_i}v_i](\dot v_i,\dot v_i,\varphi_i)+2[\partial_2^2\exp_{\gamma_i}v_i](\dot v_i,\dot\varphi_i)}_{U_i})\,\d t,\\
&\partial^2J[v_1,\ldots,v_N](\phi,\phi)\\
&=2\sum_{i=1}^N\int_{t_{i-1}}^{t_i}(U_i,U_i)+(D_t^2\gamma_v,\underbrace{2[\partial_2^2\exp_{\gamma_i}v_i](\varphi_i,\ddot\varphi_i)+[\partial_2^3\exp_{\gamma_i}v_i](\varphi_i,\varphi_i,\ddot v_i)+4[\partial_2^3\exp_{\gamma_i}v_i](\dot\varphi_i,\dot v_i,\varphi_i)}_{V_i^1}\\
&\hspace*{55ex}+\underbrace{[\partial_2^4\exp_{\gamma_i}v_i](\varphi_i,\varphi_i,\dot v_i,\dot v_i)+2[\partial_2^2\exp_{\gamma_i}v_i](\dot\varphi_i,\dot\varphi_i)}_{V_i^2})\,\d t.
\end{align*}
Now by \cref{thm:logDerivs},
\begin{align*}
\|U_i-\ddot\varphi_i\|_{L^2}
&\leq K\|v_i\|_{L^\infty}\left(\|v_i\|_{L^\infty}\|\ddot\varphi_i\|_{L^2}+\|\ddot v_i\|_{L^2}\|\varphi_i\|_{L^\infty}+\|\dot v_i\|_{L^\infty}\|\dot\varphi_i\|_{L^2}\right)
+K\|\dot v_i\|_{L^\infty}^2\|\dot\varphi_i\|_{L^2},\\
\|V_i^1+V_i^2\|_{L^2}
&\leq K\|v_i\|_{L^\infty}\left(\|\varphi_i\|_{L^\infty}\|\ddot\varphi_i\|_{L^2}+\|\dot\varphi_i\|_{L^\infty}\|\dot\varphi_i\|_{L^2}\right)\\
&\quad+K\left(\|\varphi_i\|_{L^\infty}^2\|\ddot v_i\|_{L^2}+\|\varphi_i\|_{L^\infty}\|\dot v_i\|_{L^\infty}\|\ddot\varphi_i\|_{L^2}+\|\dot v_i\|_{L^\infty}^2\|\varphi_i\|_{L^\infty}\|\varphi_i\|_{L^2}\right)
\end{align*}
for a constant $K>0$ depending only on $\mfd$.
Now recall that for $(v_1,\ldots,v_N)\in\mathcal A_h$ we have
\begin{equation*}
\|v_i\|_{L^\infty}\leq Ch,\quad
\|\dot v_i\|_{L^\infty}\leq2C,\quad
\|\ddot v_i\|_{L^2}^2\leq2\|D_t^2\gamma\|_{L^2}^2+2.
\end{equation*}
Furthermore, due to the homogeneous boundary conditions on each $\varphi_i$ there exists $c>0$ independent of $h$ or $\varphi_i$ such that
\begin{equation*}
\|\varphi_i\|_{L^\infty}\leq ch^{3/2}\|\ddot\varphi_i\|_{L^2},\quad
\|\dot\varphi_i\|_{L^\infty}\leq ch^{1/2}\|\ddot\varphi_i\|_{L^2},\quad
\|\dot\varphi_i\|_{L^2}\leq ch\|\ddot\varphi_i\|_{L^2}
\end{equation*}
(as follows from straightforward scaling arguments).
Summarizing, there is a constant $\kappa>0$ depending only on $\mfd$ and $\gamma$ such that
\begin{equation*}
\|U_i-\ddot\varphi_i\|_{L^2}
\leq\kappa h\|\ddot\varphi_i\|_{L^2},\quad
\|V_i^1+V_i^2\|_{L^2}
\leq\kappa h^{3/2}\|\ddot\varphi_i\|_{L^2}^2.
\end{equation*}
Furthermore, for $h$ small enough we have
\begin{multline*}
\|D_t^2\gamma_v\|_{L^2((t_{i-1},t_i))}^2
=\|[\partial_2\exp_{\gamma_i}v_i]\ddot v_i+[\partial_2^2\exp_{\gamma_i}v_i](\dot v_i,\dot v_i)\|_{L^2((t_{i-1},t_i))}^2\\
\leq2\|[\partial_2\exp_{\gamma_i}v_i]\ddot v_i\|_{L^2((t_{i-1},t_i))}^2+2\|[\partial_2^2\exp_{\gamma_i}v_i](\dot v_i,\dot v_i)\|_{L^2((t_{i-1},t_i))}^2
\leq4\|\ddot v_i\|_{L^2((t_{i-1},t_i))}^2+1
\leq 8\|D_t^2\gamma\|_{L^2}^2+9
\end{multline*}
by \cref{thm:logDerivs} and the previous estimates on $v_i$ so that
\begin{multline*}
\partial^2J[v_1,\ldots,v_N](\phi,\phi)
\geq2\sum_{i=1}^N\left(\frac12\|\ddot\varphi_i\|_{L^2}^2-\kappa^2h^2\|\ddot\varphi_i\|_{L^2}^2-\|D_t^2\gamma_v\|_{L^2((t_{i-1},t_i))}\kappa h^{\frac32}\|\ddot\varphi_i\|_{L^2}^2\right)\\
\geq(1-2\kappa^2h^2-\kappa h^{\frac32}(8\|D_t^2\gamma\|_{L^2}+9))\sum_{i=1}^N\|\ddot\varphi_i\|_{L^2}^2
>0
\end{multline*}
for $h$ small enough (depending just on $\mfd$ and $\gamma$).
Therefore, $J$ is strictly convex on $\mathcal A_h$.
\notinclude{
so that any minimizer on $\mathcal A_h$ (and thus everywhere) must be unique.
Furthermore, by Mazur's lemma there exists a sequence of convex combination coefficients $(a_1^k,\ldots,a_{N_k}^k)$, $k=1,2,\ldots$,
such that $\sum_{i=1}^{N_k}a_i^k(v_1^{k+i},\ldots,v_N^{k+i})\to(v_1^\infty,\ldots,v_N^\infty)$ strongly in $H^2((t_0,t_1))\times\ldots\times H^2((t_{N-1},t_N))$
and thus (up to a subsequence) all function values and first and second derivatives converge even pointwise almost everywhere.
Hence we have
\begin{multline*}
\liminf_{k\to\infty}J[v_1^k,\ldots,v_N^k]
=\lim_{k\to\infty}\sum_{i=1}^{N_k}a_i^kJ[v_1^{k+i},\ldots,v_N^{k+i}]
\geq\lim_{k\to\infty}E\left[\sum_{i=1}^{N_k}a_i^k(v_1^{k+i},\ldots,v_N^{k+i})\right]\\
\geq E\left[\lim_{k\to\infty}\sum_{i=1}^{N_k}a_i^k(v_1^{k+i},\ldots,v_N^{k+i})\right]
=J[v_1^\infty,\ldots,v_N^\infty],
\end{multline*}
where we used monotonicity of $J[v_1^k,\ldots,v_N^k]$ in the first equality, the convexity of $E$ in the first inequality,
and Fatou's lemma in the last inequality.
Thus, $(v_1^\infty,\ldots,v_N^\infty)$ is the unique minimizer.
}
\end{proof}

We close the section by deriving the Euler--Lagrange equations satisfied by the linear and cubic spline interpolations.

\begin{proposition}[Optimality conditions]\label{thm:optCond}
\begin{enumerate}
\item
A linear spline interpolation $\gamma_h$ satisfies the optimality conditions
\begin{gather*}
0=D_t^2\gamma_h\text{ on }(t_{k-1},t_{k}),\,k=1,\ldots,N,\\
\gamma_h=\gamma\text{ at }t_0,\ldots,t_N.
\end{gather*}
\item\label{enm:optCond}
A cubic spline interpolation $\gamma_h$ satisfies the optimality conditions
\begin{gather*}
0=D_t^4\gamma_h+\Rm(D_t^2\gamma_h,\dot\gamma_h)\dot\gamma_h\text{ on }(t_{k-1},t_{k}),\,k=1,\ldots,N,\\
D_t\gamma_h\text{ and }D_t^2\gamma_h\text{ exist and }\gamma_h=\gamma\text{ at }t_0,\ldots,t_N,\\
D_t\gamma_h=D_t\gamma\text{ at }t_0\text{ and }t_N.
\end{gather*}
\end{enumerate}
\end{proposition}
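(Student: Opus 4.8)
The plan is to obtain both sets of conditions as the vanishing of the first variation of the respective energy, supplemented by a regularity argument that makes the minimizer smooth on each open subinterval. I would first extract the interior differential equations and then read off the matching conditions at the knots from the boundary terms of an integration by parts.

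For an admissible variation field $\phi\in\lift{\gamma_h}\mfd$ I would use the family $\gamma(s,t)=\exp_{\gamma_h(t)}(s\phi(t))$. Preserving the constraints requires $\phi(t_i)=0$ for all $i$ (so the interpolation points stay fixed) and, in the cubic case, $D_t\phi=0$ at $t=0,1$ (so the Hermite velocity conditions are preserved to first order, using $D_s\partial_t\gamma=D_t\partial_s\gamma=D_t\phi$ from \cref{thm:rules}). Since $\gamma_h$ minimizes, $\frac{\d}{\d s}E[\gamma(s,\cdot)]|_{s=0}=0$ for every such $\phi$. For the cubic energy the core computation differentiates $\int_0^1(D_t^2\gamma,D_t^2\gamma)\,\d t$ under the integral and commutes $D_s$ past the two factors of $D_t$ via \cref{thm:rules}, giving $D_sD_t^2\gamma=D_t^2\phi+\Rm(\phi,\dot\gamma_h)\dot\gamma_h$ at $s=0$, so that $\tfrac12\frac{\d}{\d s}E|_{s=0}=\int_0^1(D_t^2\phi+\Rm(\phi,\dot\gamma_h)\dot\gamma_h,D_t^2\gamma_h)\,\d t$. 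I would integrate the first term by parts twice on each subinterval $(t_{i-1},t_i)$ (using metric compatibility of the Levi-Civita connection as in \cref{thm:rules}\eqref{enm:transportedDerivative}) and rewrite the curvature term with the pair symmetry of $\Rm$, $(\Rm(\phi,\dot\gamma_h)\dot\gamma_h,D_t^2\gamma_h)=(\phi,\Rm(D_t^2\gamma_h,\dot\gamma_h)\dot\gamma_h)$. Since $\phi(t_i)=0$ all boundary terms containing $\phi$ drop out, leaving $0=\sum_i[(D_t\phi,D_t^2\gamma_h)]_{t_{i-1}}^{t_i}+\int_0^1(\phi,D_t^4\gamma_h+\Rm(D_t^2\gamma_h,\dot\gamma_h)\dot\gamma_h)\,\d t$.

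Choosing $\phi$ compactly supported in a single subinterval then forces the interior equation $D_t^4\gamma_h+\Rm(D_t^2\gamma_h,\dot\gamma_h)\dot\gamma_h=0$, after which the remaining boundary sum yields the matching conditions: at the endpoints the terms vanish because $D_t\phi=0$ there (the Hermite condition), while at each interior knot $t_k$ the contribution $(D_t\phi(t_k),D_t^2\gamma_h(t_k^-)-D_t^2\gamma_h(t_k^+))$ must vanish for arbitrary $D_t\phi(t_k)$, forcing continuity of $D_t^2\gamma_h$ across $t_k$; no condition is imposed on $D_t^3\gamma_h$, which may jump. Continuity of $\dot\gamma_h=D_t\gamma_h$ across the knots, needed even to phrase the matching condition, is already supplied by the a priori bound \eqref{eqn:velocityCubic}. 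The linear case is the identical computation with a single integration by parts: the interior equation is $D_t^2\gamma_h=0$ (constant-speed geodesics on each piece), and no derivative matching is imposed since the velocity is permitted to jump.

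The step requiring the most care, and where I expect the main obstacle, is the regularity needed to legitimize this formal calculation: a priori only $D_t^2\gamma_h\in L^2$ is available, so $D_t^3\gamma_h$ and $D_t^4\gamma_h$ need not exist classically. I would pass to the chart formulation of \cref{thm:wellPosedness}, in which on each piece the energy is a smooth, coercive second-order integral functional in the $v_i$; standard ODE regularity for its Euler--Lagrange equation upgrades each $v_i$, hence $\gamma_h$ on each open subinterval, to a smooth solution, so that the pointwise equation and the integration by parts are valid and the one-sided limits defining the matching conditions exist. A secondary technicality is that $\exp_{\gamma_h(t)}(s\phi(t))$ preserves the nonlinear Hermite velocity constraint only to first order, which is all that the first-order optimality condition requires; alternatively one may argue directly in the convex chart formulation of \cref{thm:wellPosedness}, where $\mathcal A_h$ is genuinely convex and the vanishing of $\partial J$ reproduces the same equations.
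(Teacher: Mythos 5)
Your proposal follows essentially the same route as the paper: a first-variation computation, commuting $D_s$ past $D_t$ via the curvature identity of \cref{thm:rules}, two integrations by parts per subinterval, and the fundamental lemma, with the interior-knot boundary sum $\bigl((D_t^-)^2\gamma_h(t_k)-(D_t^+)^2\gamma_h(t_k),D_t\phi(t_k)\bigr)$ yielding continuity of $D_t^2\gamma_h$. The only differences are cosmetic or additive: you instantiate the variation family explicitly as $\exp_{\gamma_h(t)}(s\phi(t))$ where the paper works with an arbitrary admissible family, and you explicitly flag the regularity needed to justify $D_t^4\gamma_h$, a point the paper leaves implicit.
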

\begin{proof}
\begin{enumerate}
\item
Extend $\gamma_h$ to a family $\gamma_h:\R\times[0,1]\to\mfd$ of competing curves with $\gamma(0,\cdot)$ being the minimizer.
Then at $s=0$ we necessarily have
\begin{multline*}
0
=\frac12\frac\d{\d s}\int_0^1|\partial_t\gamma_h|^2\,\d t
=\int_0^1(\partial_t\gamma_h,D_s\partial_t\gamma_h)\,\d t
=\int_0^1(\partial_t\gamma_h,D_t\partial_s\gamma_h)\,\d t\\
=\sum_{k=1}^N\left[(\partial_t\gamma_h,\partial_s\gamma_h)\right]_{t_{k-1}}^{t_k}-\int_{t_{k-1}}^{t_k}(D_t^2\gamma_h,\partial_s\gamma_h)\,\d t
=-\sum_{k=1}^N\int_{t_{k-1}}^{t_k}(D_t^2\gamma_h,\partial_s\gamma_h)\,\d t
\end{multline*}
where we used \cref{thm:rules}, performed integration by parts using $\frac\d{\d t}(v,w)=(D_tv,w)+(v,D_tw)$
and where we exploited $\partial_s\gamma_h(t_k)=0$ due to the admissibility of the curve family $\gamma_h$.
Since $\gamma_h$ was extended arbitrarily, the result now follows from the fundamental lemma of the calculus of variations.
\item
Extend $\gamma_h$ to a family $\gamma_h:\R\times[0,1]\to\mfd$ of competing curves with $\gamma(0,\cdot)$ being the minimizer.
Then at $s=0$ we necessarily have
\begin{align*}
0
&=\frac12\frac\d{\d s}\int_0^1|D_t^2\gamma_h|^2\,\d t\\
&=\int_0^1(D_t^2\gamma_h,D_sD_t^2\gamma_h)\,\d t\\
&=\int_0^1(D_t^2\gamma_h,D_tD_s\partial_t\gamma_h+\Rm(\partial_s\gamma_h,\partial_t\gamma_h)\partial_t\gamma_h)\,\d t\\
&=\sum_{k=1}^N\left[(D_t^2\gamma_h,D_s\partial_t\gamma_h)\right]_{t_{k-1}}^{t_k}+\int_{t_{k-1}}^{t_k}(D_t^2\gamma_h,\Rm(\partial_s\gamma_h,\partial_t\gamma_h)\partial_t\gamma_h)-(D_t^3\gamma_h,D_s\partial_t\gamma_h)\,\d t\\
&=\sum_{k=1}^N\left[(D_t^2\gamma_h,D_t\partial_s\gamma_h)\right]_{t_{k-1}}^{t_k}+\int_{t_{k-1}}^{t_k}(\partial_s\gamma_h,\Rm(D_t^2\gamma_h,\partial_t\gamma_h)\partial_t\gamma_h)-(D_t^3\gamma_h,D_t\partial_s\gamma_h)\,\d t\\
&=\sum_{k=1}^N\left[(D_t^2\gamma_h,D_t\partial_s\gamma_h)\right]_{t_{k-1}}^{t_k}-\left[(D_t^3\gamma_h,\partial_s\gamma_h)\right]_{t_{k-1}}^{t_k}+\int_{t_{k-1}}^{t_k}(D_t^4\gamma_h+\Rm(D_t^2\gamma_h,\partial_t\gamma_h)\partial_t\gamma_h,\partial_s\gamma_h)\,\d t\\
&=\sum_{k=1}^{N-1}((D_t^-)^2\gamma_h(t_k)-(D_t^+)^2\gamma_h(t_k),D_t\partial_s\gamma_h(t_k))+\sum_{k=1}^N\int_{t_{k-1}}^{t_k}(D_t^4\gamma_h+\Rm(D_t^2\gamma_h,\partial_t\gamma_h)\partial_t\gamma_h,\partial_s\gamma_h)\,\d t,
\end{align*}
where $D_t^+$ and $D_t^-$ denote the left and right derivative.
Above, we again repeatedly exploited \cref{thm:rules} and performed integration by parts.
Furthermore we exploited $\partial_s\gamma_h(t_k)=0$ and $D_t\partial_s\gamma_h(t)=D_s\partial_t\gamma_h(t)=0$ at $t=t_0,t_N$ due to the admissibility of the curve family $\gamma_h$.
Since $\gamma_h$ was extended arbitrarily, the result now follows from the fundamental lemma of the calculus of variations.
\qedhere
\end{enumerate}
\end{proof}

\section{Two short convergence proofs in the Euclidean case}\label{sec:Euclidean}

Linear and cubic spline interpolation of curves in Euclidean space converge quadratically and quartically, repectively.
As our aim is to transfer this result to the Riemannian setting, we briefly provide the corresponding Euclidean proof here.
We first give a simple proof exploiting the piecewise polynomial structure of splines,
in particular that the second derivative of the cubic spline interpolation $\gamma_h$ is the $L^2$-best piecewise linear approximation of $\ddot\gamma$
(in the remainder of the article we will try to mimick this idea).
Afterwards we give a second, more direct but less elegant proof for cubic spline interpolation
which essentially just checks the stability of interpolation as well as its consistency (by comparing the Taylor expansions of spline interpolant and original curve).
In principle, an analogous approach should also be possible on a manifold,
but the corresponding stability and Taylor estimates would best be obtained in a chart and promise to be quite cumbersome.
Below we will denote the Sobolev space of integrable functions with essentially bounded derivatives up to fourth order by $W^{4,\infty}$.

\begin{theorem}[Spline convergence]\label{thm:EuclideanCase}
Let $\gamma\in W^{4,\infty}((0,1))$. Its linear spline interpolation $\gamma_h$ satisfies
\begin{equation*}
\|\gamma_h-\gamma\|_{L^p}\leq ch^2
\end{equation*}
for some $c>0$ depending only on $\gamma$ and $p$.
The corresponding cubic spline interpolation $\gamma_h$ satisfies
\begin{equation*}
\|\gamma_h-\gamma\|_{L^p}\leq ch^4
\end{equation*}
for some $c>0$ depending only on $\gamma$ and $p$.
\end{theorem}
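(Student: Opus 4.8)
The plan is to reduce everything to the scalar statement (arguing componentwise, with $|\cdot|$ the Euclidean norm), and to treat the two cases separately. The linear case is immediate: in Euclidean space the constant-speed geodesic joining $\gamma(t_{i-1})$ and $\gamma(t_i)$ is the affine segment, so $\gamma_h$ is exactly the nodal piecewise-linear interpolant $\Pi\gamma$, and the classical bound $\|\gamma-\Pi\gamma\|_{L^\infty((t_{i-1},t_i))}\le\frac{h^2}8\|\ddot\gamma\|_{L^\infty}$ gives $\|\gamma_h-\gamma\|_{L^\infty}\le ch^2$; since $[0,1]$ has finite measure, $\|\gamma_h-\gamma\|_{L^p}\le\|\gamma_h-\gamma\|_{L^\infty}$ for every $p$. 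I therefore focus on the cubic case, write $e=\gamma_h-\gamma$, and let $\mathcal S_h$ denote the space of continuous piecewise-linear functions on the grid $\{t_i\}$.

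The backbone is the best-approximation (minimum-curvature) property: I claim $\ddot\gamma_h=P_h\ddot\gamma$, the $L^2$-orthogonal projection of $\ddot\gamma$ onto $\mathcal S_h$. Indeed, $e$ satisfies $e(t_i)=0$ for all $i$ (interpolation) and $\dot e(0)=\dot e(1)=0$ (Hermite conditions), so for any $\ell\in\mathcal S_h$ two integrations by parts give $\int_0^1\ell\,\ddot e\,\d t=[\ell\dot e]_0^1-\sum_k\dot\ell|_{(t_{k-1},t_k)}\,(e(t_k)-e(t_{k-1}))=0$; hence $\ddot e\perp\mathcal S_h$, while $\ddot\gamma_h\in\mathcal S_h$ because $\gamma_h$ is a $C^2$ piecewise cubic. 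From this projection identity I would extract the quantitative estimate $\|\ddot e\|_{L^\infty}\le ch^2$. Writing $\Pi\ddot\gamma\in\mathcal S_h$ for the nodal interpolant of $\ddot\gamma$, the standard bound yields $\|\ddot\gamma-\Pi\ddot\gamma\|_{L^\infty}\le\frac{h^2}8\|\ddddot\gamma\|_{L^\infty}$, and since $P_h\Pi\ddot\gamma=\Pi\ddot\gamma$ we have $\ddot\gamma_h-\Pi\ddot\gamma=P_h(\ddot\gamma-\Pi\ddot\gamma)$. Invoking the (classical) uniform $L^\infty$-stability of the $L^2$-projection onto continuous piecewise-linear elements on a uniform grid, $\|P_h\|_{L^\infty\to L^\infty}\le C$ independently of $h$, the triangle inequality then gives $\|\ddot e\|_{L^\infty}\le ch^2$.

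It remains to upgrade this $O(h^2)$ control of $\ddot e$ to $O(h^4)$ control of $e$, gaining two orders through the orthogonality by a duality argument. Fix $t^\ast\in(t_{m-1},t_m)$ and let $w$ be the Green's function that is affine on $(0,t^\ast)$ and on $(t^\ast,1)$, continuous, with a unit jump of $\dot w$ at $t^\ast$; integrating by parts twice and again using $e(t_i)=0$ and $\dot e(0)=\dot e(1)=0$ to kill all boundary contributions, one obtains the representation $e(t^\ast)=\int_0^1\ddot e\,w\,\d t$. Because $\ddot e\perp\mathcal S_h$, I may subtract $\Pi w\in\mathcal S_h$, so $e(t^\ast)=\int_0^1\ddot e\,(w-\Pi w)\,\d t$. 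Now $w$ is affine on every subinterval except $[t_{m-1},t_m]$, so $w-\Pi w$ is supported there, and its single kink of unit slope-jump yields $\|w-\Pi w\|_{L^1}\le ch^2$. Hence $|e(t^\ast)|\le\|\ddot e\|_{L^\infty}\|w-\Pi w\|_{L^1}\le ch^2\cdot ch^2=ch^4$, and since $t^\ast$ was arbitrary, $\|e\|_{L^\infty}\le ch^4$, whence $\|\gamma_h-\gamma\|_{L^p}\le ch^4$ for all $p\in[1,\infty]$.

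The main obstacle is the $L^\infty$-bound $\|\ddot e\|_{L^\infty}\le ch^2$. The bare projection property only gives $\|\ddot e\|_{L^2}\le ch^2$, and feeding that into the duality step via Cauchy--Schwarz (with $\|w-\Pi w\|_{L^2}\le ch^{3/2}$) loses half an order and yields merely $h^{7/2}$; the full quartic rate genuinely hinges on pointwise control of the second derivative, i.e.\ on the uniform $L^\infty$-stability of $P_h$ (equivalently, on the maximum-principle/diagonal-dominance estimate for the tridiagonal moment system determining the values $\ddot\gamma_h(t_i)$). This is precisely the ingredient whose Riemannian analogue — an $L^\infty$-bound for $D_t^2\gamma_h$ — the remainder of the paper must quantify.
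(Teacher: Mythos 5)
Your argument is correct and follows essentially the same route as the paper's first proof: the Galerkin orthogonality identifying $\ddot\gamma_h$ as the $L^2$-projection of $\ddot\gamma$ onto continuous piecewise linears, combined with the $L^\infty$-stability of that projection and a best-approximation comparison with the nodal interpolant of $\ddot\gamma$, is exactly the paper's key step, and you correctly single it out as the crux. The only cosmetic difference is the final upgrade from $\|\ddot e\|_{L^\infty}\le ch^2$ to $\|e\|_{L^\infty}\le ch^4$, which the paper obtains directly from the fact that $e$ vanishes at every node (giving $\|e\|_{L^p}\le Ch^2\|\ddot e\|_{L^p}$ on each subinterval) rather than via your Green's-function duality; both devices gain the same factor $h^2$, so the proofs coincide in substance.
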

\begin{proof}
For a function $f:[0,1]\to\R^d$ with zeros at $t_0,\ldots,t_N$ one has $\|f\|_{L^p}\leq Ch^2\|\ddot f\|_{L^p}$ for some $C>0$ depending on $p$.
Picking $f=\gamma_h-\gamma$ yields for the linear spline interpolation (which satisfies $(\ddot\gamma_h=0$)
\begin{equation*}
\|\gamma_h-\gamma\|_{L^p}
\leq Ch^2\|\ddot\gamma\|_{L^p}.
\end{equation*}
For the cubic spline interpolation one obtains
\begin{equation*}
\|\gamma_h-\gamma\|_{L^p}
\leq Ch^2\|\ddot\gamma_h-\ddot\gamma\|_{L^p}.
\end{equation*}
Now it turns out that $\ddot\gamma_h$ is the piecewise linear $L^2$-best approximation of $\ddot\gamma$.
Indeed, for any piecewise linear $w:[0,1]\to\R^d$ one obtains the Galerkin orthogonality
\begin{equation*}
\int_0^1(w,\ddot\gamma_h-\ddot\gamma)\,\d t
=\sum_{i=1}^N\left[(w,\dot\gamma_h-\dot\gamma)\right]_{t=t_{i-1}}^{t_i}-\int_{t_{i-1}}^{t_i}(\dot w,\dot\gamma_h-\dot\gamma)\,\d t
=\left[(w,\dot\gamma_h-\dot\gamma)-(\dot w,\gamma_h-\gamma)\right]_{t=t_{i-1}}^{t_i}
=0
\end{equation*}
from two integrations by parts, exploiting that $\gamma_h-\gamma=0$ at the interpolation points, that $\dot\gamma_h$ exists by \cref{thm:optCond}, and that $\dot\gamma_h-\dot\gamma=0$ at $t=0$ and $t=1$.
This immediately implies (e.\,g.\ via C\'ea's lemma) that $\ddot\gamma_h$ is the $L^2$-projection or -best approximation of $\ddot\gamma$.
Thus, by \cite[Lemma\,4.3]{DoDuWa75} (or by the $L^\infty$-boundedness of the $L^2$-projection onto splines \cite{dB76} and the resulting best approximation argument as in \cite[Thm.\,3.3.7]{Ci78}) we have
\begin{equation*}
\|\ddot\gamma_h-\ddot\gamma\|_{L^\infty}
\leq Ch^2\|\gamma\|_{W^{4,\infty}}
\end{equation*}
for some $C>0$.
\end{proof}

The alternative, more direct proof for cubic spline interpolation just uses stability and fourth order consistency of cubic spline interpolation via Taylor expansion.

\begin{proof}
The piecewise cubic spline interpolation can be written as a linear combination of shifted and scaled versions of the cubic B-spline basis function
\begin{equation*}
B(t)=\begin{cases}
(3|t|^3-6t^2+4)/6&\text{if }|t|\leq1,\\
(2-|t|)^3/6&\text{if }|t|\in(1,2],\\
0&\text{else.}
\end{cases}
\end{equation*}
Writing $\gamma_h$ as a linear combination
\begin{equation*}
\gamma_h(t)=\sum_{i=-1}^{N+1}x_iB(\tfrac th-i)
\end{equation*}
of these basis functions, the interpolation constraints translate into the linear system
\begin{equation*}
Ax=b
\quad\text{with }
A=\frac16\begin{psmallmatrix}\frac12&1\\1&4&1\\&1&4&1\\&&\ddots&\ddots&\ddots\\&&&1&4&1\\&&&&1&\frac12\end{psmallmatrix},\,
x=\begin{psmallmatrix}x_{-1}\\\vdots\\x_{N+1}\end{psmallmatrix},\,
b=\begin{psmallmatrix}\tfrac{\gamma(t_0)}4-\tfrac{\dot\gamma(0)h}{12}\\\gamma(t_0)\\\vdots\\\gamma(t_N)\\\tfrac{\gamma(t_N)}4+\tfrac{\dot\gamma(1)h}{12}\end{psmallmatrix}.
\end{equation*}
Now for a point $t\in[t_i,t_{i+1}]$ we abbreviate $a_i=B(\tfrac th-i)$ (thus all $a_i$ are zero except for $a_{i-1},\ldots,a_{i+2}$) so that
\begin{equation*}
\gamma_h(t)-\gamma(t)=a^Tx-\gamma(t)=a^TA^{-1}b-\gamma(t)
\quad\text{with }
a=(a_{-1},\ldots,a_{N+1})^T.
\end{equation*}
Now define $p(r)=\gamma(t)+\dot\gamma(t)(r-t)+\frac12\ddot\gamma(t)(r-t)^2+\frac16\dddot\gamma(t)(r-t)^3$ to be the third order Taylor expansion of $\gamma$ and notice
\begin{equation*}
b=b_1+b_2=\begin{psmallmatrix}p(t_0)/4-\dot p(0)h/12\\p(t_0)\\\vdots\\p(t_N)\\p(t_N)/4+\dot p(1)h/12\end{psmallmatrix}+\begin{psmallmatrix}O(|0-t|^4)\\O(|t_0-t|^4)\\\vdots\\O(|t_N-t|^4)\\O(|1-t|^4)\end{psmallmatrix}.
\end{equation*}
Since $a^TA^{-1}b_1=p(t)=\gamma(t)$, we arrive at
\begin{equation*}
|\gamma_h(t)-\gamma(t)|
=|a^TA^{-1}b_2|
=\left|\sum_{j=i-1}^{i+2}a_jA^{-1}_{j,:}b_2\right|
\leq\sum_{j=i-1}^{i+2}|A^{-1}_{j,:}b_2|,
\end{equation*}
where $A^{-1}_{j,:}=(q_{-1},q_0,\ldots,q_{N+1})$ denotes the $j$th row of $A^{-1}$ and thus necessarily satisfies
\begin{gather*}
q_{-1}=-2q_0
\quad\text{and}\quad
q_k=-(q_{k-2}+4q_{k-1}),\,k=1,\ldots,j,\\
q_{N+1}=-2q_N
\quad\text{and}\quad
q_k=-(q_{k+2}+4q_{k+1}),\,k=N-1,\ldots,j.
\end{gather*}
By induction this immediately implies
\begin{equation*}
q_kq_{k-1}<0,\,k=0,\ldots,N+1,
\qquad
|q_k|\begin{cases}
\geq|q_{k-1}|,\,k=1,\ldots,j,\\
\leq|q_{k-1}|,\,k=j+1,\ldots,N,
\end{cases}
\quad\text{ and }
|q_k|\leq3^{-|k-j|}|q_j|,\,k=0,\ldots,N.
\end{equation*}
Furthermore, the equation $1=q_{j-1}+4q_j+q_{j+1}$ together with the fact that $q_{j-1},q_{j+1}$ have a different sign than $q_j$ and are at least by the factor $3$ smaller in absolute value implies
\begin{equation*}
|q_j|\leq1
\quad\text{and thus}\quad
|q_k|\leq3^{-|k-j|},\,k=0,\ldots,N.
\end{equation*}
Summarizing, we obtain
\begin{equation*}
|\gamma_h(t)-\gamma(t)|
\leq\sum_{j=i-1}^{i+2}|A^{-1}_{j,:}b_2|
\leq\sum_{j=i-1}^{i+2}\left|\sum_{k=-1}^{N+1}3^{-|k-j|}O(|t_k-t|^4)\right|
\leq4h^4\sum_{k=-\infty}^\infty3^{-|k-j|}(|k-j|+1)^4
\leq Ch^4
\end{equation*}
for a constant depending only on the maximum fourth derivative of $\gamma$.
\end{proof}

\begin{corollary}[Perturbed spline]\label{thm:perturbedSpline}
Let $g:[0,1]\to\R^d$ be twice differentiable with $g(t_i)=\dot g(0)=\dot g(1)=0$ for $i=0,\ldots,N$
and $g\in W^{4,\infty}((t_{i-1},t_i))$ with $\|\partial_t^4g\|_{L^\infty((t_{i-1},t_i))}\leq C$ for $i=1,\ldots,N$.
Then $\|g\|_{L^\infty}\leq ch^4$ with a constant $c>0$ depending only on $C$.
\end{corollary}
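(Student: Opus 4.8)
The plan is to run the classical moment (second-derivative) analysis of cubic spline interpolation, but localized so that it only ever sees the \emph{piecewise} fourth-derivative bound. First I introduce the moments $M_i=\ddot g(t_i)$, $i=0,\ldots,N$, which are well defined since $g$ is twice differentiable and, because $g$ is $C^3$ on each open subinterval, agree with the one-sided limits of $\ddot g$ from either side. On each $[t_{i-1},t_i]$ I split $g=c_i+r_i$, where $c_i$ is the unique cubic polynomial with $c_i(t_{i-1})=c_i(t_i)=0$ and $\ddot c_i(t_{i-1})=M_{i-1}$, $\ddot c_i(t_i)=M_i$ (so $\ddot c_i$ is the affine interpolant of the two endpoint moments). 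The remainder $r_i=g-c_i$ then vanishes together with its second derivative at both endpoints and satisfies $r_i^{(4)}=g^{(4)}$, hence $\|r_i^{(4)}\|_{L^\infty((t_{i-1},t_i))}\leq C$.

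Next I estimate the remainder using the elementary bound $\|u\|_{L^\infty}\leq\frac{h^2}{8}\|\ddot u\|_{L^\infty}$, valid for any $u$ vanishing at the endpoints of an interval of length $h$. Applying it to $\ddot r_i$ (which vanishes at the endpoints) gives $\|\ddot r_i\|_{L^\infty}\leq\frac{Ch^2}{8}$, and applying it again to $r_i$ gives $\|r_i\|_{L^\infty}\leq\frac{Ch^4}{64}$. By Rolle's theorem $\dot r_i$ has a zero in $(t_{i-1},t_i)$, so integrating $\ddot r_i$ yields $\|\dot r_i\|_{L^\infty}\leq h\|\ddot r_i\|_{L^\infty}\leq\frac{Ch^3}{8}$; in particular the endpoint values $\dot r_i(t_{i-1})$, $\dot r_i(t_i)$ are $O(h^3)$.

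I then extract the tridiagonal system for the moments from the $C^1$-continuity of $g$. A short explicit computation gives $\dot c_i(t_i)=\frac{h}{6}(2M_i+M_{i-1})$ and $\dot c_i(t_{i-1})=-\frac{h}{6}(2M_{i-1}+M_i)$. Matching $\dot g$ across each interior knot $t_i$, i.e.\ $\dot c_i(t_i)+\dot r_i(t_i)=\dot c_{i+1}(t_i)+\dot r_{i+1}(t_i)$, produces
\begin{equation*}
\tfrac{h}{6}(M_{i-1}+4M_i+M_{i+1})=\dot r_{i+1}(t_i)-\dot r_i(t_i),\quad i=1,\ldots,N-1,
\end{equation*}
while the boundary conditions $\dot g(0)=\dot g(1)=0$ give $\frac{h}{6}(2M_0+M_1)=\dot r_1(t_0)$ and $\frac{h}{6}(M_{N-1}+2M_N)=-\dot r_N(t_N)$. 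By the previous step all right-hand sides are $O(h^3)$, so after multiplying by $6/h$ this reads $AM=e$ with $\|e\|_\infty=O(h^2)$ and $A$ the symmetric tridiagonal matrix with diagonal $(2,4,\ldots,4,2)$ and unit off-diagonals.

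Finally, $A$ is strictly diagonally dominant with row excess $\min_i(|A_{ii}|-\sum_{j\neq i}|A_{ij}|)=1$, whence $\|A^{-1}\|_\infty\leq1$ and therefore $\max_i|M_i|=\|M\|_\infty\leq\|e\|_\infty=O(h^2)$. Feeding this back into the explicit form of $c_i$ shows $\|c_i\|_{L^\infty}\leq c'h^2(|M_{i-1}|+|M_i|)=O(h^4)$, and together with $\|r_i\|_{L^\infty}\leq\frac{Ch^4}{64}$ this yields $\|g\|_{L^\infty}\leq\max_i(\|c_i\|_{L^\infty}+\|r_i\|_{L^\infty})\leq ch^4$ with $c$ depending only on $C$. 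The one genuinely quantitative point, and the part I would be most careful about, is the bookkeeping of powers of $h$: the defects in the continuity relations must come out as $O(h^3)$ so that, after the $6/h$ rescaling forced by the moment matrix, the moments are $O(h^2)$ rather than merely $O(1)$; the diagonal-dominance bound $\|A^{-1}\|_\infty\leq1$ is what makes this moment bound uniform in $N$.
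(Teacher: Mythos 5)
Your proof is correct, and it takes a genuinely different route from the paper's. The paper reduces the corollary to its Euclidean convergence theorem (\cref{thm:EuclideanCase}): it constructs an auxiliary curve $\gamma\in W^{4,\infty}((0,1))$ with $\gamma(0)=\dot\gamma(0)=\gamma(1)=\dot\gamma(1)=0$ whose fourth weak derivative agrees with $\partial_t^4g$ on each subinterval, observes that $\gamma-g$ is piecewise cubic, globally $C^2$, interpolates $\gamma$ at the knots and matches its endpoint derivatives --- hence by uniqueness \emph{is} the cubic spline interpolant $\gamma_h$ --- and concludes $\|g\|_{L^\infty}=\|\gamma-\gamma_h\|_{L^\infty}\leq ch^4$. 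That argument is two lines but leans on the already-established quartic convergence theorem. You instead rerun the classical moment analysis from scratch, localized so that it only ever uses the piecewise bound on $\partial_t^4g$: the decomposition $g=c_i+r_i$ with $r_i$ and $\ddot r_i$ vanishing at the knots, the Green's-function bound $\|u\|_{L^\infty}\leq\tfrac{h^2}{8}\|\ddot u\|_{L^\infty}$ applied twice, the $C^1$-matching conditions yielding the tridiagonal system $AM=e$ with $\|e\|_\infty=O(h^2)$, and the diagonal-dominance bound $\|A^{-1}\|_\infty\leq 1$ (uniform in $N$) closing the loop. All the computations check out, including $\dot c_i(t_i)=\tfrac h6(2M_i+M_{i-1})$ and the boundary rows $(2,1)$ and $(1,2)$. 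What your version buys is self-containedness and explicit constants --- it makes the corollary independent of \cref{thm:EuclideanCase} and of the B-spline stability argument used there; what it costs is essentially re-proving a localized version of de Boor's estimate. One point worth keeping explicit in a final write-up is the one you already flag in passing: the well-definedness and two-sided consistency of the moments $M_i=\ddot g(t_i)$, which is exactly where the global twice-differentiability hypothesis (as opposed to mere piecewise regularity) enters.
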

\begin{proof}
Define $\gamma:[0,1]\to\R^d$ to be the curve in $W^{4,\infty}((0,1))$ with $\gamma(0)=\dot\gamma(0)=\gamma(1)=\dot\gamma(1)=0$
whose fourth weak derivative is $f\in L^\infty((0,1))$ with $f=\partial_t^4g$ in each $(t_{i-1},t_i)$,
that is, we set
\begin{align*}
\tilde\gamma(t)&=\int_0^t\int_0^s\int_0^r\int_0^qf(p)\,\d p\,\d q\,\d r\,\d s,\\
\gamma(t)&=\tilde\gamma(t)+(\dot{\tilde\gamma}(1)-3\tilde\gamma(1))t^2+(2\tilde\gamma(1)-\dot{\tilde\gamma}(1))t^3.
\end{align*}
Letting now $\gamma_h$ be the cubic spline interpolation of $\gamma$, we obviously have $\gamma-\gamma_h=g$
and thus $\|g\|_{L^\infty}\leq ch^4$, where the constant only depends on the maximum fourth derivative of $\gamma$ (which is bounded by $C$).
\end{proof}

\section{The approximation properties of Riemannian cubic splines}\label{sec:convergence}
In this section we transfer the proof of \cref{thm:EuclideanCase} for the Euclidean case to the Riemannian setting.
Essentially, we will proceed in three steps:
we will first derive a generalization of the estimate $\|\gamma_h-\gamma\|_{L^p}\leq Ch^2\|\ddot\gamma_h-\ddot\gamma\|_{L^p}$,
then we will introduce a linear interpolation operator $\linInt$ of vector fields
as a preparation for estimating $\|\ddot\gamma_h-\ddot\gamma\|_{L^p}$ (which we think of as some kind of best approximation error) by $\|\ddot\gamma_h-\linInt\ddot\gamma\|_{L^p}$,
and finally we will derive boundedness of $\|D_t^2\gamma_h\|_{L^\infty}$ using the $L^2$-best approximation idea from \cref{thm:EuclideanCase}
and bootstrap this $L^2$-best approximation argument with the help of the bound on $\|D_t^2\gamma_h\|_{L^\infty}$
to obtain a generalization of the Euclidean estimate $\|\ddot\gamma_h-\ddot\gamma\|_{L^\infty}\leq Ch^2\|\gamma\|_{W^{4,\infty}}$.

\subsection{Quadratic convergence rate exploiting local interpolation}
Just like in the proof of \cref{thm:EuclideanCase} for the Euclidean case we first derive a quadratic dependence of the spline interpolation error on $h$.

\begin{lemma}[Approximation lemma]\label{thm:approximationLemma}
Let $\gamma:[0,1]\to\mfd$ be a curve and $v\in\lift\gamma\mfd$ a vector field along the curve with $v(t_k)=v(t_{k+1})=\ldots=v(t_{k+l})=0$,
then for $p\in[1,\infty]$ and $I=(t_k,t_{k+l})$ we have
\begin{equation*}
\|v\|_{L^p(I)}
\leq ch^{l+1}\|D_t^{l+1}v\|_{L^p(I)}
\end{equation*}
for a constant $c$ depending only on $l$ and $p$.
\end{lemma}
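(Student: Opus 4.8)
The claim is a Riemannian generalization of the Euclidean Poincaré-type inequality $\|f\|_{L^p}\leq ch^2\|\ddot f\|_{L^p}$ used in the proof of Theorem~\ref{thm:EuclideanCase}, now for a vector field $v$ along $\gamma$ vanishing at $l+1$ consecutive knots, with the ordinary derivative replaced by the covariant derivative $D_t^{l+1}$. The natural strategy is to transport everything to a single tangent space so the problem becomes a genuine Euclidean estimate for a vector-valued function, then invoke the scalar result. My plan is to fix the base point $\gamma(t_k)$ and let $\pi_\gamma$ denote parallel transport along $\gamma$ to $T_{\gamma(t_k)}\mfd$, then set $w(t)=\pi_\gamma v(t)\in T_{\gamma(t_k)}\mfd$, a genuine vector-valued function on $I$ taking values in a fixed finite-dimensional inner product space.

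\textbf{Key steps.} First, because parallel transport is a linear isometry, $|w(t)|=|v(t)|$ pointwise, so $\|w\|_{L^p(I)}=\|v\|_{L^p(I)}$, and likewise the interpolation conditions transfer: $w(t_{k+j})=\pi_\gamma v(t_{k+j})=0$ for $j=0,\ldots,l$. Second, by repeated application of \cref{thm:rules}\eqref{enm:transportedDerivative}, differentiation along the curve commutes with parallel transport, so $\partial_t^{m}w=\partial_t^m(\pi_\gamma v)=\pi_\gamma D_t^m v$ for each $m$, and in particular $\partial_t^{l+1}w=\pi_\gamma D_t^{l+1}v$, whence $\|\partial_t^{l+1}w\|_{L^p(I)}=\|D_t^{l+1}v\|_{L^p(I)}$ again by the isometry property. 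Third, $w$ is now an $\R^{\dim\mfd}$-valued function on the interval $I=(t_k,t_{k+l})$ of length $lh$, vanishing at the $l+1$ equally spaced points $t_k,\ldots,t_{k+l}$, so the purely Euclidean estimate applies componentwise (or directly to the vector-valued $w$), giving $\|w\|_{L^p(I)}\leq ch^{l+1}\|\partial_t^{l+1}w\|_{L^p(I)}$ with $c$ depending only on $l$ and $p$. Combining the three identities yields the claim.

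\textbf{The Euclidean ingredient.} The remaining content is the scalar (or $\R^d$-valued) inequality: if $w$ on an interval of length $lh$ vanishes at $l+1$ equispaced nodes, then $\|w\|_{L^p}\leq ch^{l+1}\|\partial_t^{l+1}w\|_{L^p}$. This follows from the Peano kernel / divided-difference representation: since $w$ vanishes at all $l+1$ nodes, repeated integration of $\partial_t^{l+1}w$ against a Green's-function kernel supported on $I$ reconstructs $w$, and the kernel scales like $h^{l+1}$ after rescaling $I$ to a fixed reference interval of length $l$. Equivalently, one rescales to the reference interval $[0,l]$, applies the fixed-interval inequality there (a standard consequence of $w$ having $l+1$ zeros, e.g. via Rolle's theorem giving a zero of each intermediate derivative and then integrating), and tracks the $h^{l+1}$ factor through the change of variables. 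This is the same elementary fact already invoked (for $l=1$) in the proof of \cref{thm:EuclideanCase}.

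\textbf{Main obstacle.} There is no serious obstacle; the whole point is that parallel transport linearizes the problem \emph{exactly}, with no curvature error terms, precisely because \cref{thm:rules}\eqref{enm:transportedDerivative} makes $\tfrac{\d}{\d t}\pi_\gamma$ commute with $\pi_\gamma D_t$ on the nose. The only mild care needed is to confirm that the pointwise commutation identity iterates correctly to order $l+1$ (an easy induction, using that $\pi_\gamma D_t^m v$ is itself a transported vector field so the rule reapplies) and that differentiability of $v$ is enough for the manipulations, which the hypotheses of the surrounding results supply.
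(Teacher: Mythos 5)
Your proposal is correct and follows essentially the same route as the paper's own proof: parallel transport $v$ to a fixed tangent space (the paper uses $T_{\gamma(0)}\mfd$, you use $T_{\gamma(t_k)}\mfd$, which is immaterial), use \cref{thm:rules}\eqref{enm:transportedDerivative} to identify $\partial_t^{l+1}(\pi_\gamma v)$ with $\pi_\gamma D_t^{l+1}v$, and invoke the standard Euclidean approximation/scaling result together with the isometry of parallel transport. The paper's proof is just a terser version of exactly this argument.
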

\begin{proof}
Let $\pi_\gamma:T_{\gamma(t)}\mfd\to T_{\gamma(0)}\mfd$ denote the parallel transport along the curve $\gamma$.
Then $\pi_\gamma v:[0,1]\to T_{\gamma(0)}\mfd$ satisfies $\pi_\gamma v(t_k)=\ldots=\pi_\gamma v(t_{k+l})=0$
so that the standard approximation result for curves in Euclidean space \cite[Thm.\,3.1.5]{Ci78} or a straightforward scaling argument can be applied, giving
\begin{equation*}
\|v\|_{L^p(I)}
=\|\pi_\gamma v\|_{L^p(I)}
\leq ch^{l+1}\|\partial_t^{l+1}(\pi_\gamma v)\|_{L^p(I)}
=ch^{l+1}\|D_t^{l+1}v\|_{L^p(I)}
\end{equation*}
by \cref{thm:rules}\eqref{enm:transportedDerivative}.
\end{proof}

This immediately implies the generalization of the Euclidean estimate $\|\gamma_h-\gamma\|_{L^p}\leq Ch^2\|\ddot\gamma_h-\ddot\gamma\|_{L^p}$.

\begin{proposition}[Quadratic convergence]\label{thm:quadraticConvergence}
For any $p\in[1,\infty]$ the linear and cubic spline interpolation of an absolutely continuous curve $\gamma:[0,1]\to\mfd$ satisfy
\begin{equation*}
\|d(\gamma_h,\gamma)\|_{L^p(I)}
\leq ch^2\|D_t^2\log_{\gamma_h}\!\gamma\|_{L^p(I)}
\end{equation*}
on each interval $I=(t_{i-1},t_i)$
for a constant $c>0$ independent of $\gamma$, $\gamma_h$, or $h$, for
all $h$ small enough (both depending only on $\mfd$ and $\gamma$).
\end{proposition}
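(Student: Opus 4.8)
The plan is to mimic the Euclidean argument of \cref{thm:EuclideanCase}, where the difference $\gamma_h-\gamma$ vanishes at the interpolation nodes and one invokes $\|f\|_{L^p}\leq Ch^2\|\ddot f\|_{L^p}$. The Riemannian stand-in for the Euclidean difference is the vector field $v=\log_{\gamma_h}\gamma\in\lift{\gamma_h}\mfd$ along $\gamma_h$. It has two features that make the argument go through verbatim. First, $|v(t)|=d(\gamma_h(t),\gamma(t))$ whenever the logarithm is defined, so that $\|v\|_{L^p(I)}=\|d(\gamma_h,\gamma)\|_{L^p(I)}$. Second, since $\gamma_h(t_i)=\gamma(t_i)$ at every interpolation node, we get $v(t_i)=\log_{\gamma_h(t_i)}\gamma_h(t_i)=0$; in particular $v(t_{i-1})=v(t_i)=0$ at the endpoints of $I$. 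Applying the Approximation Lemma (\cref{thm:approximationLemma}) with the curve taken to be $\gamma_h$, the vector field $v$, and $l=1$ then yields directly $\|v\|_{L^p(I)}\leq ch^2\|D_t^2v\|_{L^p(I)}$, which (reading $D_t$ as covariant differentiation along $\gamma_h$) is exactly the asserted inequality.

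The one point requiring care is the well-definedness of $v=\log_{\gamma_h}\gamma$, namely that $\gamma_h(t)$ and $\gamma(t)$ remain within one injectivity radius of each other throughout $I$, so that the logarithm exists, is smooth, and has norm equal to the Riemannian distance. I would deduce this from the a priori closeness bounds. In the cubic case, \eqref{eqn:closenessCubic} gives $d(\gamma_h(t),\gamma(t_i))\leq h\|\dot\gamma\|_{L^\infty}$ and the second estimate of \cref{thm:aPrioriBounds}\eqref{enm:aPrioriBoundsCubic} gives $d(\gamma(t),\gamma(t_i))\leq h\|\dot\gamma\|_{L^\infty}$, whence $d(\gamma_h(t),\gamma(t))\leq 2h\|\dot\gamma\|_{L^\infty}$, which drops below the injectivity radius as soon as $h$ is small (depending only on $\mfd$ and $\gamma$). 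In the linear case the analogous conclusion follows from \eqref{eqn:closenessLinear} together with the first part of \cref{thm:aPrioriBounds}, which yield $d(\gamma_h(t),\gamma(t))\leq c\sqrt{h}\,\|\dot\gamma\|_{L^2}\to 0$ as $h\to0$.

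The main (and essentially the only) obstacle is therefore securing this uniform closeness across all subintervals; once that is in place the estimate is an immediate instance of the Approximation Lemma with $l=1$, requiring no further Riemannian computation, because the parallel-transport device inside \cref{thm:approximationLemma} has already reduced the bound to the scalar Euclidean approximation result. I would close by remarking that the constant delivered by the Approximation Lemma depends only on $l=1$ and $p$, hence is independent of $\gamma$, $\gamma_h$, and $h$, exactly as the statement demands.
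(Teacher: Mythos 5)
Your proposal is correct and follows exactly the paper's proof: the paper likewise applies \cref{thm:approximationLemma} with $l=1$ to $v=\log_{\gamma_h}\!\gamma$, notes $d(\gamma_h,\gamma)=|\log_{\gamma_h}\!\gamma|$, and justifies well-definedness of the logarithm for small $h$ via \cref{thm:aPrioriBounds} and continuity of $\gamma$. Your treatment of the closeness of $\gamma_h$ and $\gamma$ is merely a more explicit version of the same step.
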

\begin{proof}
Simply apply \cref{thm:approximationLemma} with $l=1$ to $v=\log_{\gamma_h}\!\gamma$ and note $d(\gamma_h,\gamma)=|\log_{\gamma_h}\!\gamma|_{\gamma_h}$ as long as $h$ is small enough for $v$ to be well-defined (follows from \cref{thm:aPrioriBounds} and continuity of $\gamma$).
\end{proof}

The full convergence result of \cref{thm:mainResultCubic,thm:mainResultLinear} for linear and cubic spline interpolation will follow from estimating the right-hand side of the previous inequality.
To this end we expand
\begin{multline*}
D_t^2\log_{\gamma_h}\!\gamma
=\partial_2\log_{\gamma_h}\!\gamma\,(D_t^2\gamma)+\partial_1\log_{\gamma_h}\!\gamma\,(D_t^2\gamma_h)
+\partial_2^2\log_{\gamma_h}\!\gamma\,(\dot\gamma,\dot\gamma)
+\partial_2\partial_1\log_{\gamma_h}\!\gamma\,(\dot\gamma,\dot\gamma_h)\\
+\partial_1\partial_2\log_{\gamma_h}\!\gamma\,(\dot\gamma_h,\dot\gamma)
+\partial_1^2\log_{\gamma_h}\!\gamma\,(\dot\gamma_h,\dot\gamma_h).
\end{multline*}
Note that at this point, \cref{thm:quadraticConvergence} can already be turned into a quadratic convergence result (which for linear splines already is the optimal rate),
which is briefly done in the remainder of this paragraph.
Indeed, with \cref{thm:logDerivs} and the boundedness of $\gamma$ and $\gamma_h$ from \cref{thm:aPrioriBounds}
or more specifically from \eqref{eqn:closenessLinear}-\eqref{eqn:closenessCubic} we can estimate
\begin{equation*}
\|D_t^2\log_{\gamma_h}\!\gamma\|_{L^p(I)}
\leq C\left(\|D_t^2\gamma_h\|_{L^p(I)}+\|D_t^2\gamma\|_{L^p(I)}+\|(|\dot\gamma|+|\dot\gamma_h|)^2\|_{L^p(I))}\right)
\end{equation*}
for some $C>0$ if $h$ is small enough (depending on $\gamma$ and $\mfd$), which leads to the following result.

\begin{corollary}[Quadratic $L^1$- and $L^2$-convergence]\label{thm:quadraticConvergenceI}
Let $\gamma:[0,1]\to\mfd$ be twice differentiable, then its linear spline interpolation $\gamma_h$ satisfies
\begin{equation*}
\|d(\gamma_h,\gamma)\|_{L^1}
\leq ch^2
\end{equation*}
for some constant $c>0$ and all $h$ small enough (both depending only on $\mfd$ and $\gamma$).
If $\gamma$ is four times differentiable, then its cubic spline interpolation $\gamma_h$ satisfies
\begin{equation*}
\|d(\gamma_h,\gamma)\|_{L^2}
\leq ch^2
\end{equation*}
for some constant $c>0$ and all $h$ small enough (again both depending only on $\mfd$ and $\gamma$).
\end{corollary}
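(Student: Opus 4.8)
The plan is to feed the per-interval estimate of \cref{thm:quadraticConvergence} into the expansion of $D_t^2\log_{\gamma_h}\!\gamma$ together with the bound
\[
\|D_t^2\log_{\gamma_h}\!\gamma\|_{L^p(I)}\leq C\left(\|D_t^2\gamma_h\|_{L^p(I)}+\|D_t^2\gamma\|_{L^p(I)}+\|(|\dot\gamma|+|\dot\gamma_h|)^2\|_{L^p(I)}\right)
\]
derived just above (the constant $C$ being uniform over all intervals $I=(t_{i-1},t_i)$, since it only stems from \cref{thm:logDerivs} and the uniform a priori bounds of \cref{thm:aPrioriBounds}), and then to sum the resulting estimates over $i=1,\ldots,N$. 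The only genuinely case-dependent point is the choice of the exponent $p$: linear spline interpolation offers merely $L^2$-control of the velocity $\dot\gamma_h$, whereas cubic spline interpolation provides the stronger $L^\infty$-control \eqref{eqn:velocityCubic}; this is exactly why the $L^1$-rate is the natural one in the linear and the $L^2$-rate in the cubic case.

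For the linear spline interpolation I take $p=1$. Here $D_t^2\gamma_h=0$ on each interval by \cref{thm:optCond}, so the first term drops out, and since the $L^1$-norm is additive over the partition, summing over $i$ gives
\[
\|d(\gamma_h,\gamma)\|_{L^1}\leq Ch^2\left(\|D_t^2\gamma\|_{L^1}+\int_0^1(|\dot\gamma|+|\dot\gamma_h|)^2\,\d t\right).
\]
Now $\|D_t^2\gamma\|_{L^1}$ is finite because $\gamma$ is twice differentiable, and the last integral is bounded by $2\|\dot\gamma\|_{L^2}^2+2\|\dot\gamma_h\|_{L^2}^2$; the first summand is controlled since $\dot\gamma$ is bounded, and the second by minimality of the spline energy, $\|\dot\gamma_h\|_{L^2}^2\leq\|\dot\gamma\|_{L^2}^2$ (as $\gamma$ is an admissible competitor). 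All three contributions being $h$-independent constants yields the claimed $ch^2$.

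For the cubic spline interpolation I take $p=2$, square the per-interval estimate, use $(a+b+c)^2\leq3(a^2+b^2+c^2)$, and sum; since $\|\cdot\|_{L^2([0,1])}^2$ equals the sum of the squared interval norms, I obtain
\[
\|d(\gamma_h,\gamma)\|_{L^2}^2\leq 3C^2h^4\left(\|D_t^2\gamma_h\|_{L^2}^2+\|D_t^2\gamma\|_{L^2}^2+\int_0^1(|\dot\gamma|+|\dot\gamma_h|)^4\,\d t\right).
\]
The first term is bounded by minimality, $\|D_t^2\gamma_h\|_{L^2}^2\leq\|D_t^2\gamma\|_{L^2}^2$, the second is finite because $\gamma$ is four times differentiable, and the quartic velocity integral is bounded by $(\|\dot\gamma\|_{L^\infty}+\|\dot\gamma_h\|_{L^\infty})^4$, all three being $h$-independent; here the $L^\infty$-bound on $\dot\gamma_h$ from \eqref{eqn:velocityCubic} is essential and is precisely what is unavailable for linear splines. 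Taking square roots gives $\|d(\gamma_h,\gamma)\|_{L^2}\leq ch^2$. I expect no real obstacle beyond bookkeeping; the only point requiring genuine care is matching the integrability of the velocity terms to the exponent $p$, which is what forces the split into an $L^1$-statement for linear and an $L^2$-statement for cubic splines.
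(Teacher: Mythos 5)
Your proposal is correct and follows essentially the same route as the paper: apply \cref{thm:quadraticConvergence} on each interval, control $\|D_t^2\log_{\gamma_h}\!\gamma\|_{L^p(I)}$ via the expansion preceding the corollary, sum over the partition, and absorb the remaining terms using $D_t^2\gamma_h=0$ (linear case), energy minimality, and \eqref{eqn:velocityCubic} (cubic case). The only difference is cosmetic — you write out the squaring-and-summing step for $p=2$ that the paper leaves implicit.
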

\begin{proof}
By \cref{thm:quadraticConvergence}, for the linear spline we have
\begin{multline*}
\|d(\gamma_h,\gamma)\|_{L^1}
\leq ch^2\sum_{i=1}^N\|D_t^2\log_{\gamma_h}\!\gamma\|_{L^1((t_{i-1},t_i))}
\leq ch^2\sum_{i=1}^NC\left(\|D_t^2\gamma\|_{L^1((t_{i-1},t_i))}+\||\dot\gamma|+|\dot\gamma_h|\|_{L^2((t_{i-1},t_i))}^2\right)\\
=cCh^2\left(\|D_t^2\gamma\|_{L^1}+\||\dot\gamma|+|\dot\gamma_h|\|_{L^2}^2\right)
\leq cCh^2\left(\|D_t^2\gamma\|_{L^1}+2\|\dot\gamma\|_{L^2}^2+2\|\dot\gamma_h\|_{L^2}^2\right)
\leq cCh^2\left(\|D_t^2\gamma\|_{L^1}+4\|\dot\gamma\|_{L^2}^2\right),
\end{multline*}
where we exploited the optimality condition $D_t^2\gamma_h=0$ as well as $\|\dot\gamma_h\|_{L^2}^2\leq\|\dot\gamma\|_{L^2}^2$.
Similarly, for the cubic spline we have
\begin{multline*}
\|d(\gamma_h,\gamma)\|_{L^2}
\leq ch^2\|D_t^2\log_{\gamma_h}\!\gamma\|_{L^2}
\leq cCh^2\left(\|D_t^2\gamma\|_{L^2}+\|D_t^2\gamma_h\|_{L^2}+\|(|\dot\gamma|+|\dot\gamma_h|)^2\|_{L^2}\right)\\
\leq cCh^2\left(2\|D_t^2\gamma\|_{L^2}+(\|\dot\gamma\|_{L^\infty}+\|\dot\gamma_h\|_{L^\infty})^2\right)
\leq cCh^2\left(2\|D_t^2\gamma\|_{L^2}+(\|\dot\gamma\|_{L^\infty}+|\dot\gamma(0)|+2\|D_t^2\gamma\|_{L^2})^2\right),
\end{multline*}
where we exploited $\|D_t^2\gamma_h\|_{L^2}\leq\|D_t^2\gamma\|_{L^2}$ as well as \eqref{eqn:velocityCubic}.
\end{proof}

To improve the previous convergence estimate for linear spline interpolation to the final estimate of \cref{thm:mainResultLinear} we require the following uniform bound on $\dot\gamma_h$,
which immediately follows from properties of unit speed parameterized geodesics.

\begin{proposition}[Boundedness of first derivative]\label{thm:boundFirstDeriv}
The linear spline interpolation $\gamma_h$ of a curve $\gamma$ satisfies
\begin{equation*}
\|\dot\gamma_h\|_{L^\infty}
\leq\|\dot\gamma\|_{L^\infty}.
\end{equation*}
\end{proposition}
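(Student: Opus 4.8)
The plan is to exploit the explicit geometric description of the linear spline interpolation that was already established in the proof of \cref{thm:wellPosedness}: on each subinterval $[t_{i-1},t_i]$ the minimizer $\gamma_h$ is the constant-speed geodesic connecting $\gamma(t_{i-1})$ and $\gamma(t_i)$. Equivalently, this is encoded in the optimality condition $D_t^2\gamma_h=0$ on $(t_{i-1},t_i)$ from \cref{thm:optCond}. For such an affinely parameterized geodesic on an interval of length $h$, the norm $|\dot\gamma_h|$ is constant in $t$ and equals the total length of the segment divided by $h$, i.e.
\begin{equation*}
|\dot\gamma_h(t)|=\frac{d(\gamma(t_{i-1}),\gamma(t_i))}{h}
\quad\text{for all }t\in(t_{i-1},t_i).
\end{equation*}

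Next I would bound the numerator by the length of the original curve over the same interval. Since $\gamma$ itself is an admissible path joining $\gamma(t_{i-1})$ to $\gamma(t_i)$, the Riemannian distance is at most its length, so
\begin{equation*}
d(\gamma(t_{i-1}),\gamma(t_i))
\leq\int_{t_{i-1}}^{t_i}|\dot\gamma(\tau)|\,\d\tau
\leq h\,\|\dot\gamma\|_{L^\infty}.
\end{equation*}
Combining the two displays gives $|\dot\gamma_h(t)|\leq\|\dot\gamma\|_{L^\infty}$ pointwise on each open subinterval, and taking the supremum over all $t\in[0,1]$ (the finitely many knots being a null set) yields the claimed $L^\infty$-bound.

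There is essentially no hard part here; the only point requiring care is the justification that $\gamma_h$ is genuinely the constant-speed geodesic segment rather than some other critical point, but this is exactly what was shown in \cref{thm:wellPosedness} for $h$ small enough (where uniqueness of the connecting geodesic holds), so I would simply invoke that. The estimate is sharp in the sense that equality is approached when $\gamma$ is itself a geodesic traversed at constant speed, which is a useful sanity check.
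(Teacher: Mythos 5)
Your proposal is correct and follows essentially the same route as the paper: on each subinterval $\gamma_h$ is the constant-speed minimizing geodesic, so $|\dot\gamma_h|=d(\gamma(t_{i-1}),\gamma(t_i))/h\leq\frac1h\int_{t_{i-1}}^{t_i}|\dot\gamma|\,\d t\leq\|\dot\gamma\|_{L^\infty}$. The paper's proof is exactly this computation, so there is nothing to add.
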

\begin{proof}
Recall that between $t_{i-1}$ and $t_i$ the curve $\gamma_h$ is a shortest unit speed parameterized geodesic between $\gamma(t_{i-1})$ and $\gamma(t_i)$,
thus on $(t_{i-1},t_i)$ we have
\begin{equation*}
|\dot\gamma_h|
=d(\gamma(t_{i-1}),\gamma(t_i))/h
\leq\int_{t_{i-1}}^{t_i}|\dot\gamma|\,\d t/h
\leq\|\dot\gamma\|_{L^\infty}.
\qedhere
\end{equation*}
\end{proof}

Together with \cref{thm:boundFirstDeriv} for linear spline interpolation and $\|D_t^2\gamma_h\|_{L^\infty}\leq3\|D_t^2\gamma\|_{L^\infty} + Ch$ for cubic spline interpolation, derived later in \cref{thm:BoundSecondDeriv}, which by a bootstrapping argument makes use of \cref{thm:quadraticConvergenceI},
one finally arrives at the following quadratic convergence estimate.

\begin{corollary}[Quadratic $L^\infty$-convergence]\label{thm:quadraticConvergenceII}
The linear and cubic spline interpolation satisfy
\begin{equation*}
\|d(\gamma_h,\gamma)\|_{L^\infty}
\leq ch^2
\end{equation*}
for some constant $c>0$ depending only on $\|\dot\gamma\|_{L^\infty}$, $\|D_t^2\gamma\|_{L^\infty}$, and $\mfd$, for $h$ small enough depending on $\mfd$ and $\gamma$.
\end{corollary}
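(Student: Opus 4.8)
The plan is to combine the pointwise quadratic estimate of \cref{thm:quadraticConvergence} with uniform bounds on the first and second covariant derivatives of $\gamma_h$. Once the right-hand side $\|D_t^2\log_{\gamma_h}\!\gamma\|_{L^\infty(I)}$ is controlled by a constant independent of $h$ and of the interval index, the prefactor $h^2$ immediately yields the asserted rate, so the entire task reduces to establishing such a uniform bound on each interval $I=(t_{i-1},t_i)$.

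First I would apply \cref{thm:quadraticConvergence} with $p=\infty$ on each $I$, giving $\|d(\gamma_h,\gamma)\|_{L^\infty(I)}\leq ch^2\|D_t^2\log_{\gamma_h}\!\gamma\|_{L^\infty(I)}$. Invoking the expansion of $D_t^2\log_{\gamma_h}\!\gamma$ displayed just after \cref{thm:quadraticConvergence}, together with the derivative bounds of \cref{thm:logDerivs} and the closeness estimates \eqref{eqn:closenessLinear}--\eqref{eqn:closenessCubic}, one obtains for $h$ small enough the intermediate bound $\|D_t^2\log_{\gamma_h}\!\gamma\|_{L^\infty(I)}\leq C\big(\|D_t^2\gamma_h\|_{L^\infty(I)}+\|D_t^2\gamma\|_{L^\infty(I)}+(\|\dot\gamma\|_{L^\infty}+\|\dot\gamma_h\|_{L^\infty})^2\big)$. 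It then remains to bound each term on the right by a constant depending only on $\|\dot\gamma\|_{L^\infty}$, $\|D_t^2\gamma\|_{L^\infty}$, and $\mfd$.

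For the linear spline this is immediate: the optimality condition of \cref{thm:optCond} gives $D_t^2\gamma_h=0$ on each $I$, while \cref{thm:boundFirstDeriv} gives $\|\dot\gamma_h\|_{L^\infty}\leq\|\dot\gamma\|_{L^\infty}$, so the bracket is bounded by $C(\|D_t^2\gamma\|_{L^\infty}+4\|\dot\gamma\|_{L^\infty}^2)$, uniformly in $h$ and $i$. For the cubic spline I would instead substitute the uniform second-derivative bound $\|D_t^2\gamma_h\|_{L^\infty}\leq 3\|D_t^2\gamma\|_{L^\infty}+Ch$ from \cref{thm:BoundSecondDeriv} together with the velocity bound $\|\dot\gamma_h\|_{L^\infty}\leq|\dot\gamma(0)|+2\|D_t^2\gamma\|_{L^2}$ from \eqref{eqn:velocityCubic}; this again renders the bracket bounded independently of $h$ and $i$. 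In both cases taking the supremum over $i=1,\ldots,N$ yields $\|d(\gamma_h,\gamma)\|_{L^\infty}\leq ch^2$ with a constant of the required form.

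The genuine difficulty is entirely concentrated in the cubic case's reliance on $\|D_t^2\gamma_h\|_{L^\infty}\leq 3\|D_t^2\gamma\|_{L^\infty}+Ch$, the content of the later \cref{thm:BoundSecondDeriv}: unlike the linear case, where $D_t^2\gamma_h$ vanishes identically, one must control the second covariant derivative of a curvature-energy minimizer, which is not available a priori and is obtained there by bootstrapping the quadratic $L^2$-convergence of \cref{thm:quadraticConvergenceI} through the Riemannian analogue of the best-approximation property of $\ddot\gamma_h$. Modulo that deferred estimate, the corollary is merely an assembly of the pieces above, and I would not expect any further obstruction in the present proof.
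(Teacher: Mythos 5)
Your proposal is correct and follows essentially the same route as the paper: apply \cref{thm:quadraticConvergence} intervalwise with $p=\infty$, bound $\|D_t^2\log_{\gamma_h}\!\gamma\|_{L^\infty(I_i)}$ via the expansion and \cref{thm:logDerivs}, and then use $D_t^2\gamma_h=0$ with \cref{thm:boundFirstDeriv} in the linear case and the deferred bound of \cref{thm:BoundSecondDeriv} together with \eqref{eqn:velocityCubic} in the cubic case. Your observation that the whole weight of the cubic case rests on the (non-circular, since it only uses \cref{thm:quadraticConvergenceI}) bootstrap in \cref{thm:BoundSecondDeriv} matches the paper's own presentation.
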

\begin{proof}
Abbreviating $I_i=(t_{i-1},t_i)$, by \cref{thm:quadraticConvergence} and \cref{thm:boundFirstDeriv} or \cref{thm:BoundSecondDeriv} we have
\begin{align*}
\|d(\gamma_h,\gamma)\|_{L^\infty}
&\leq ch^2\max_{i=1,\ldots,N}\|D_t^2\log_{\gamma_h}\!\gamma\|_{L^\infty(I_i)}\\
&\leq ch^2\max_{i=1,\ldots,N}C\left(\|D_t^2\gamma\|_{L^\infty(I_i)}+\|D_t^2\gamma_h\|_{L^\infty(I_i)}+\||\dot\gamma|+|\dot\gamma_h|\|_{L^\infty(I_i)}^2\right)\\
&\leq \tilde Ch^2\left(\|D_t^2\gamma\|_{L^\infty}+Ch+\|\dot\gamma\|_{L^\infty}^2\right),
\end{align*}
for some $\tilde C>0$, where in the case of linear splines we exploited that $D_t^2\gamma_h=0$.
\end{proof}

\subsection{A linear interpolation operator of vector fields}
As mentioned before, the guiding idea of our approach is to interpret $D_t^2\gamma_h$ as an approximation to $D_t^2\gamma$
which is (maybe up to a constant factor) as good as the $L^2$-best approximation or a piecewise linear interpolation.
To formalize this, below we introduce a piecewise linear interpolation operator of vector fields along curves
and analyse its well-posedness and approximation properties.

\begin{definition}[Linear interpolation operator]\label{def:linInt}
	Let $\gamma:[0,1]\to\mfd$ be a piecewise differentiable and globally continuous curve and $v\in\lift\gamma\mfd$ a continuous vector field along $\gamma$.
	We define the piecewise linear interpolation $\linInt v$ of $v$ as solution $w$ to the linear boundary value problem
	\begin{gather}\label{eq:jacobi}
	D_t^2w+\Rm(w,\dot\gamma)\dot\gamma=0\text{ on }(t_{k-1},t_k),\,k=1,\ldots,N,\\
	w(t_k)=v(t_k),\,k=0,\ldots,N.\notag
	\end{gather}
\end{definition}

\begin{lemma}[Well-posedness and boundedness of linear interpolation]\label{thm:LinearInterpolation}
For $h$ small enough depending on $\mfd$ and $\|\dot\gamma\|_{L^\infty}$, $\linInt$ is well-defined, that is, the boundary value problem \eqref{eq:jacobi} has a unique solution.
Furthermore, for every continuous vector field $v\in\lift\gamma\mfd$ we have 
\begin{align*}
\|\linInt v\|_{L^\infty} &\leq2\max_{k=0,\ldots,N}|v(t_{k})|\leq 2\|v\|_{L^\infty}.
\end{align*}
\end{lemma}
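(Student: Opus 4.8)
The plan is to solve the boundary value problem \eqref{eq:jacobi} on each interval $(t_{k-1},t_k)$ separately, since the interpolation conditions $w(t_k)=v(t_k)$ decouple the problem across knots. On a single interval, the equation $D_t^2w+\Rm(w,\dot\gamma)\dot\gamma=0$ is a linear second-order ODE for the vector field $w$ along $\gamma$ — indeed, it is precisely the Jacobi equation along $\gamma$. The natural first step is to express this equation in terms of the parallel-transported field $\pi_\gamma w$, using \cref{thm:rules}\eqref{enm:transportedDerivative} to convert $D_t^2 w$ into an ordinary second derivative of a $T_{\gamma(t_{k-1})}\mfd$-valued function. In coordinates on the fixed tangent space this becomes a linear system $\ddot u(t) = -A(t)u(t)$ with $A(t)$ encoding the curvature term, and the operator norm of $A$ is controlled by $\|\Rm\|_\infty\|\dot\gamma\|_{L^\infty}^2$, which is finite by hypothesis. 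Standard linear ODE theory then gives existence and uniqueness of solutions given endpoint data, \emph{provided} the two-point boundary value problem is nonsingular.

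The main obstacle is precisely this nonsingularity: a linear two-point boundary value problem for $\ddot u = -Au$ with prescribed values at both endpoints has a unique solution if and only if the interval contains no conjugate points, i.e.\ if and only if the only solution vanishing at both $t_{k-1}$ and $t_k$ is the zero solution. This is where the smallness of $h$ enters. I would show that on an interval of length $h$, a nonzero Jacobi field vanishing at the left endpoint cannot vanish again before a time of order $1/(\|\Rm\|_\infty^{1/2}\|\dot\gamma\|_{L^\infty})$; this is a quantitative no-conjugate-point estimate. Concretely, writing the solution-operator bound via Grönwall, one finds that for $h$ small enough (depending on $\|\Rm\|_\infty$ and $\|\dot\gamma\|_{L^\infty}$, hence on $\mfd$ and $\|\dot\gamma\|_{L^\infty}$) the boundary value problem is uniquely solvable on each $(t_{k-1},t_k)$, and gluing the pieces gives a globally continuous $\linInt v$.

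For the sup-norm bound I would argue by a maximum-principle-type estimate on each interval. Consider the real-valued function $t\mapsto|w(t)|^2$; differentiating twice and using the Jacobi equation gives $\tfrac{\d^2}{\d t^2}|w|^2 = 2|D_tw|^2 - 2(\Rm(w,\dot\gamma)\dot\gamma,w) \geq -2\|\Rm\|_\infty\|\dot\gamma\|_{L^\infty}^2|w|^2$. Thus $|w|^2$ is \emph{almost} convex: it satisfies a differential inequality of the form $\ddot\phi \geq -\epsilon\phi$ with $\epsilon = O(h^0)$ but acting over an interval of length $h$, so the total deviation from convexity is of order $(\|\Rm\|_\infty^{1/2}\|\dot\gamma\|_{L^\infty}h)^2$. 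For $h$ small this perturbation is small enough that $|w|$ cannot exceed the factor $2$ times its endpoint values; the factor $2$ comfortably absorbs the $O(h^2)$ correction. Since the endpoint values are $|v(t_{k-1})|$ and $|v(t_k)|$, taking the maximum over all knots yields $\|\linInt v\|_{L^\infty}\leq 2\max_k|v(t_k)|\leq 2\|v\|_{L^\infty}$, as claimed. The comparison argument with the scalar ODE $\ddot\phi = -\|\Rm\|_\infty\|\dot\gamma\|_{L^\infty}^2\phi$, whose solutions are cosines/sines of period $\gg h$, makes the constant $2$ explicit and confirms it holds for all sufficiently small $h$.
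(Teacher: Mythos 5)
Your proposal is correct, and its skeleton (decouple into per-interval linear two-point boundary value problems, rule out conjugate points for $h$ small, then a comparison estimate for the sup-norm bound) matches the paper's. The key estimate, however, is obtained by a genuinely different mechanism. The paper runs a Rauch/Sturm-type comparison on the scalar $|w|$: it compares against $f(t)=\partial_t|w|(t_{k-1})\,\tfrac{2h}{\pi}\sin\bigl(\tfrac{\pi(t-t_{k-1})}{2h}\bigr)$, shows $\partial_t\bigl(f\partial_t|w|-|w|\partial_tf\bigr)\geq0$ so that $|w|/f$ is nondecreasing (with the limit $1$ at $t_{k-1}$ via de l'H\^opital), concludes $|w|(t)\leq\frac{f(t)}{f(t_k)}|w|(t_k)$ for the solution vanishing at $t_{k-1}$, and obtains the factor $2$ by splitting the general solution into two such pieces, one per endpoint. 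You instead differentiate $|w|^2$, get $\tfrac{\d^2}{\d t^2}|w|^2\geq-2\|\Rm\|_\infty\|\dot\gamma\|_{L^\infty}^2|w|^2$, and run a perturbed-convexity maximum principle: subtracting $\tfrac{c}{2}t(h-t)$ with $c=2\|\Rm\|_\infty\|\dot\gamma\|_{L^\infty}^2\max|w|^2$ makes the function convex, giving $\max|w|^2\leq\max_{\text{endpoints}}|w|^2+\tfrac{ch^2}{8}$, which self-improves to $\max|w|\leq2\max_{\text{endpoints}}|w|$ once $h$ is small; the same inequality with zero endpoint data yields $w\equiv0$ and hence nonsingularity of the boundary value problem, so your separate Gr\"onwall step is not even needed. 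Your route is more elementary and sidesteps the nondifferentiability of $|w|$ at its zeros (which forces the paper's ``as long as $w(t)\neq0$'' caveat), and it in fact yields a constant close to $1$ rather than $2$; the paper's sharper comparison buys a smallness condition on $h$ tied only to the conjugate radius ($h\|\dot\gamma\|_{L^\infty}^2\lesssim\pi/(2\mu)$) rather than an unspecified ``$h$ small enough,'' though for the purposes of this paper either suffices.
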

\begin{proof}
	As the boundary value problem~\eqref{eq:jacobi} is a linear system of second order ordinary differential equations, we obtain existence of solutions by standard ordinary differential equation theory.
	Uniqueness and the estimate on the norm follow similarly to Rauch comparison estimates for classical Jacobi fields on each interval $(t_{k-1},t_{k})$.
	In detail, assume that the scalar curvature of $\mfd$ is bounded from above by $\mu\geq 0$, and let $h$ be small enough such that $h\|\dot\gamma\|_{L^\infty}^{2}\leq\frac{\pi}{2\mu}$.
	Let $w$ be defined on $(t_{k-1},t_{k})$ by \eqref{eq:jacobi} with $w(t_{k-1})=0$ and $w(t_{k})=\hat{v}\in T_{\gamma(t_{k})}\mfd$.
	Set
	\begin{align*}f(t)=\partial_t|w|(t_{k-1})\;\tfrac{2h}{\pi}\sin\left(\tfrac{\pi(t-t_{k-1})}{2h}\right)\quad\text{for }t\in [t_{k-1},t_{k}],
	\end{align*}
	then
	\begin{gather*}
		\partial_t^2f+\tfrac\pi{2h} f=0\text{ on }(t_{k-1},t_{k})
		\quad\text{with}\\
		f(t_{k-1})=0, \qquad \partial_t f(t_{k-1})=\partial_t|w|(t_{k-1}), \quad\text{and }0< f(t)< f(t_{k}) \quad\text{for }t\in(t_{k-1},t_{k}).
	\end{gather*}
	As long as $w(t)\neq 0$, abbreviating by $K(w,\dot\gamma)=(w,\Rm(w,\dot\gamma)\dot\gamma)/\left(|w|^{2}|\dot \gamma|^{2}-( w,\dot \gamma)^{2}\right)$ the sectional curvature, we have
	\begin{align*}
		\partial_t^2|w| &=-\frac{1}{|w|}K(w,\dot \gamma)\left(|w|^{2}|\dot \gamma|^{2}-( w,\dot \gamma)^{2}\right) + \frac{1}{|w|^{3}}\left(|w|^{2}|D_tw|^{2}-( w,D_tw)^{2}\right)
		\geq -\mu|\dot \gamma|^{2}|w|
	\end{align*}
	and thus
	\begin{align*}
		\partial_t\left(f\partial_t|w|-|w|\partial_t f\right)=f\partial_t^2|w|-|w|\partial_t^2f\geq 0.
	\end{align*}
	As $(f\partial_t|w|-|w|\partial_t f)(t_{k-1})=0$, this implies
	\begin{align*}
		\partial_t\left(\tfrac{|w|}{f}\right)=\frac{1}{f^{2}}\left(f\partial_t|w|-|w|\partial_t f\right)\geq 0
	\end{align*}
	as long as $w(t)\neq 0$, and thus for $s>t>t_{k-1}$,
	\begin{align*}
		\frac{|w|(s)}{f(s)}\geq\frac{|w|(t)}{f(t)}\geq\lim_{r\searrow t_{k-1}}\frac{|w|(r)}{f(r)}=1,
	\end{align*}
	where the limit follows by de l'H\^opital's rule.
	This shows in particular, that $|w|$ cannot have a zero before $f$ does, and thus the estimate
	\begin{align*}
		|w|(t)\leq  \frac{f(t)}{f(t_{k})}|w|(t_{k})\leq |\hat{v}|
	\end{align*}
	holds for all $t\in [t_{k-1},t_{k}]$. Together with the linearity of \eqref{eq:jacobi}, this stability result implies uniqueness of the solution to the boundary value problem as well as (using that the same estimate is obtained when the roles of $t_{k-1}$ and $t_k$ are swapped) the estimate
	\begin{equation*}
		\|\linInt v\|_{L^\infty}\leq 2\max_{k=0,\ldots,N}|v(t_{k})|\leq 2\|v\|_{L^\infty}.
		\qedhere
	\end{equation*}
\end{proof}

\begin{lemma}[Approximation properties of linear interpolation]\label{thm:propertiesLinearInterpolation}
	Let $h$ be small enough such that \cref{thm:LinearInterpolation} applies, and let $p\in[1,\infty]$. There exists a constant $c>0$ depending on $p$ and $\mfd$ such that for any vector field $v\in\lift\gamma\mfd$ the estimate
	\begin{align*}
		\|D_t^{2}\linInt v\|_{L^p(I_{k})}&\leq c\|\dot\gamma\|_{L^\infty(I_{k})}^{2}\|\linInt v\|_{L^p(I_{k})} 
	\end{align*}
	holds on each interval $I_{k}=(t_{k},t_{k+1})$.
	For $p>q\geq1$ and $\alpha=(q^{-1}-p^{-1})$ there exist constants $c>0$ depending on $p$, $q$ and $\mfd$ such that the inverse estimates
	\begin{align*}
		\|\linInt v\|_{L^{p}}&\leq c\left(\|\dot \gamma\|^{\alpha}_{L^{\infty}}+h^{-\alpha}\right)\|\linInt v\|_{L^{q}},\\
		\|D_t\linInt v\|_{L^p}&\leq c\left(\|\dot\gamma\|_{L^\infty}^{1+\alpha}+h^{-1-\alpha}\right)\|\linInt v\|_{L^{q}}
	\end{align*}
	hold. If $\|D_t^lv\|_{L^p}$ is bounded for $l=0,1,2$, we further have the interpolation error estimate
	\begin{align*}
		\|v-\linInt v\|_{L^{p}}
		\leq  c\;h^{2}\left(\|D_t^{2}v\|_{L^{p}} + \|\dot\gamma\|_{L^{\infty}}^{2}\|v\|_{L^{\infty}}\right).
	\end{align*}
\end{lemma}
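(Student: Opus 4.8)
The plan is to treat the three assertions in turn, in each case exploiting that $w=\linInt v$ solves the Jacobi-type equation \eqref{eq:jacobi}, so that its covariant second derivative is algebraically determined by $w$ itself. For the first estimate I would simply read off from \eqref{eq:jacobi} that $D_t^2\linInt v=-\Rm(\linInt v,\dot\gamma)\dot\gamma$ pointwise on each $I_k$. Bounding the curvature term by $\|\Rm\|_\infty|\linInt v|\,|\dot\gamma|^2$ and taking the $L^p(I_k)$-norm immediately yields $\|D_t^2\linInt v\|_{L^p(I_k)}\le\|\Rm\|_\infty\|\dot\gamma\|_{L^\infty(I_k)}^2\|\linInt v\|_{L^p(I_k)}$, which is the claimed inequality with $c=\|\Rm\|_\infty$.

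For the inverse estimates I would pass to a fixed vector space and a reference interval. Writing $u=\partpt\gamma\linInt v$ for the parallel transport of $\linInt v$ into $T_{\gamma(0)}\mfd$ and using \cref{thm:rules}\eqref{enm:transportedDerivative}, the first part gives $|\ddot u|=|D_t^2\linInt v|\le\|\Rm\|_\infty\|\dot\gamma\|_\infty^2|u|$ on each $I_k$. Rescaling $I_k$ to $[0,1]$ turns this into $|\tilde u''|\le m|\tilde u|$ with $m=h^2\|\Rm\|_\infty\|\dot\gamma\|_\infty^2$, which is bounded by a fixed constant $m_0$ once $h$ is small. On the reference interval I would establish the norm equivalences $\|\tilde u\|_{L^p([0,1])}\le C\|\tilde u\|_{L^q([0,1])}$ and $\|\tilde u'\|_{L^p([0,1])}\le C\|\tilde u\|_{L^q([0,1])}$ with $C=C(p,q,m_0)$: since $\ddot{\tilde u}$, and hence $\dot{\tilde u}$ after an interpolation inequality, is controlled by $\|\tilde u\|_{L^\infty}$, the function cannot drop far below its maximum over a subinterval of definite length, so $\|\tilde u\|_{L^\infty([0,1])}\le C\|\tilde u\|_{L^q([0,1])}$, and the higher $L^p$-norms follow by $L^p$-interpolation. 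Scaling back introduces the factors $h^{1/p-1/q}=h^{-\alpha}$ and $h^{-1-\alpha}$, and summing over the intervals via the embedding $\ell^q\hookrightarrow\ell^p$ (valid for $p\ge q$) produces the global bounds; the terms $\|\dot\gamma\|_\infty^\alpha$ and $\|\dot\gamma\|_\infty^{1+\alpha}$ only enter to keep the statement uniform in the regime where $h$ is not yet small relative to $\|\dot\gamma\|_\infty$ (in the relevant regime $h^{-\alpha}$ dominates).

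For the interpolation error I would set $e=v-\linInt v$, which vanishes at every node $t_k$, and apply \cref{thm:approximationLemma} with $l=1$ on each $I_k$ to get $\|e\|_{L^p(I_k)}\le ch^2\|D_t^2 e\|_{L^p(I_k)}$. Using $D_t^2\linInt v=-\Rm(\linInt v,\dot\gamma)\dot\gamma$ again I rewrite $D_t^2 e=D_t^2 v+\Rm(\linInt v,\dot\gamma)\dot\gamma$, whence $\|D_t^2 e\|_{L^p(I_k)}\le\|D_t^2 v\|_{L^p(I_k)}+\|\Rm\|_\infty\|\dot\gamma\|_\infty^2\|\linInt v\|_{L^p(I_k)}$. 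Bounding $\|\linInt v\|_{L^\infty}\le2\|v\|_{L^\infty}$ by \cref{thm:LinearInterpolation} and summing the $p$-th powers over the intervals (with the elementary inequality $(a+b)^p\le2^{p-1}(a^p+b^p)$, the case $p=\infty$ being the maximum) collapses the local bounds into the asserted global estimate $\|v-\linInt v\|_{L^p}\le ch^2(\|D_t^2 v\|_{L^p}+\|\dot\gamma\|_\infty^2\|v\|_{L^\infty})$.

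The main obstacle I anticipate is the reference-interval norm equivalence underlying the inverse estimates: unlike the Euclidean finite-element setting, $\linInt v$ is not piecewise polynomial but solves a variable-coefficient Jacobi equation, so the usual argument that all norms coincide on a fixed finite-dimensional space is unavailable. I would resolve this through the quantitative bound sketched above, controlling $\dot{\tilde u}$ by $\ddot{\tilde u}$ and $\tilde u$ so that the maximum of $|\tilde u|$ is attained on a subinterval of length bounded below independently of the (admissible) coefficients, and by carefully tracking the powers of $h$ through the rescaling and the $\ell^q\hookrightarrow\ell^p$ summation so that the stated dependence on $h$ and $\|\dot\gamma\|_\infty$ comes out correctly.
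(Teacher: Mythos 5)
Your proposal is correct, and its first and third parts coincide with the paper's proof: the bound on $D_t^2\linInt v$ is read off pointwise from the defining equation \eqref{eq:jacobi} with $c=\|\Rm\|_{L^\infty}$, and the interpolation error is obtained by applying \cref{thm:approximationLemma} with $l=1$ to $v-\linInt v$, rewriting $D_t^2\linInt v$ via \eqref{eq:jacobi}, and invoking the $L^\infty$-stability $\|\linInt v\|_{L^\infty}\leq2\|v\|_{L^\infty}$ from \cref{thm:LinearInterpolation}. The only real divergence is in the inverse estimates. The paper gets the local inequality in one line from the scaled Gagliardo--Nirenberg--Sobolev inequality, $\|D_t^{j}\linInt v\|_{L^{p}(I_{k})}\leq c\|D_t^{2}\linInt v\|_{L^{q}(I_{k})}^{\beta}\|\linInt v\|_{L^{q}(I_{k})}^{1-\beta}+ch^{-j+p^{-1}-q^{-1}}\|\linInt v\|_{L^{q}(I_{k})}$ with $p^{-1}=j-2\beta+q^{-1}$, which combined with the first estimate directly produces the exponents $\|\dot\gamma\|_{L^\infty}^{2\beta}=\|\dot\gamma\|_{L^\infty}^{j+\alpha}$ appearing in the statement; you instead parallel transport to $T_{\gamma(0)}\mfd$, rescale to a reference interval where $|\tilde u''|\leq m_0|\tilde u|$, and prove the needed $L^\infty$-versus-$L^q$ equivalence by the elementary observation that a function whose derivative is controlled by its sup cannot drop below half its maximum on a subinterval of definite length. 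Your route is more self-contained (it proves rather than cites the local inverse inequality) and, as you note, under the standing smallness assumption $h\|\dot\gamma\|_{L^\infty}^2\leq\pi/(2\mu)$ it even shows the $\|\dot\gamma\|_{L^\infty}^\alpha$ and $\|\dot\gamma\|_{L^\infty}^{1+\alpha}$ terms are dispensable; the paper's version is shorter and keeps the $\|\dot\gamma\|$-dependence explicit without relying on that smallness. The final summation step, using $\sum_ia_i^{p/q}\leq\bigl(\sum_ia_i\bigr)^{p/q}$ (equivalently $\ell^q\hookrightarrow\ell^p$), is identical in both arguments.
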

\begin{proof}
	The estimate for $D_t^{2}\linInt v$ follows on each interval $I_{k}$ directly from the definition of $\linInt$ with $c=\|\Rm\|_{L^{\infty}}$.
	The inverse estimates follow from the scaled Gagliardo--Nirenberg--Sobolev inequality and the estimates on the second derivatives,
	\begin{align*}
		\|D_t^{j}\linInt v\|_{L^{p}(I_{k})}&\leq c\|D_t^{2}\linInt v\|_{L^{q}(I_{k})}^{\beta}
		\|\linInt v\|_{L^{q}(I_{k})}^{1-\beta} + ch^{-j+p^{-1}-q^{-1}}\|\linInt v\|_{L^{q}(I_{k})}\\
		&\leq c\left(\|\dot\gamma\|_{L^\infty}^{2\beta} + h^{-j+p^{-1}-q^{-1}}\right)\|\linInt v\|_{L^{q}(I_{k})}
	\end{align*}
	with $p^{-1}=j-2\beta+q^{-1}$ and $j=0,1$.
	The global estimates are now obtained by taking the $p$th power, summing over all subintervals $I_{k}$, and exploiting that $\sum_ia_i^{p/q}\leq(\sum_ia_i)^{p/q}$ for any vector of nonnegative values $a_i$.
	The interpolation error follows from \cref{thm:approximationLemma} with $l=1$, the estimate for $D_t^2\linInt v$ on each $I_k$, and \cref{thm:LinearInterpolation}.
\end{proof}

\begin{remark}[Vector field interpolation along splines]
	We will always apply the vector field interpolation $\linInt$ along the cubic spline interpolations $\gamma_{h}$ of some continuous curve $\gamma$. The dependence on $\gamma_{h}$ in the estimates of \cref{thm:LinearInterpolation} and \cref{thm:propertiesLinearInterpolation} only comprises bounds on $\|\dot\gamma_{h}\|_{L^{\infty}}$ that can be estimated in terms of $\gamma$ using \cref{thm:aPrioriBounds}. Thus, by slight abuse of notation, also for $v\in\lift\gamma\mfd$ we will take $\linInt v\in \lift{\gamma_{h}}\mfd$.
\end{remark}

\begin{remark}[Alternative interpolation operators]
	Note that the linear vector field interpolation defined by the boundary value problem~\eqref{eq:jacobi} is natural along both linear and cubic splines in the following sense:
	\begin{enumerate}
		\item Let $\gamma_h(s,t)$ be a family of linear splines, then its variation is linear in the sense $\linInt\partial_s\gamma_h=\partial_s\gamma_h$.
		Indeed, the optimality conditions in \cref{thm:optCond} imply
		$0=D_s(D_t^2\gamma_h)=D_tD_s\partial_t\gamma_h+\Rm(\partial_s\gamma_h,\partial_t\gamma_h)\partial_t\gamma_h=D_t^2\partial_s\gamma_h+\Rm(\partial_s\gamma_h,\partial_t\gamma_h)\partial_t\gamma_h$ (where we used \cref{thm:rules}), the well-known equation of Jacobi fields.
		\item For cubic splines the second derivatives $D_{t}^{2}\gamma_{h}$ are linear in the sense that $\linInt D_{t}^{2}\gamma_{h}=D_{t}^{2}\gamma_{h}$.
	\end{enumerate}
	It is obvious that there are multiple possible alternatives to generalize piecewise linear interpolation from Euclidean space to vector fields on Riemannian manifolds.
	In fact replacing~\eqref{eq:jacobi} by $D_{t}^{2}w=0$ would also work in the following convergence analysis with just slight modifications to the proofs, however, it seems a little less natural.
	Indeed, the motivation for \cref{def:linInt} is to view a (piecewise) linear vector field as a second derivative of a cubic spline (which in Euclidean space is a piecewise cubic polynomial).
  Would one consider higher order spline interpolation, that is, minimization of $\int_0^1|D_t^k\gamma_h|^2\,\d t$ under interpolation constraints,
	then again a corresponding natural definition of $\linInt$ would have been the $(2k-2)$th derivative of local minimizers of that energy
	(and the special case $k=2$ leads to \cref{def:linInt}).
\end{remark}

\subsection{Quartic convergence of cubic splines}

We aim to derive a quartic convergence rate by applying \cref{thm:perturbedSpline} to $g=\pi_{\gamma_h}\log_{\gamma_h}\!\gamma$.
To this end we need to show boundedness of $\partial_t^4g$, which can be reduced to the boundedness of $D_t^4\gamma_h$ and thus, via \cref{thm:optCond}\eqref{enm:optCond}, of $D_t^2\gamma_h$.
The derivation of such a bound occupies the major part of this paragraph, at the end of which the quartic convergence is deduced.
Exploiting $\linInt D_t^2\gamma_h=D_t^2\gamma_h$,
we could bound $D_t^2\gamma_h$ by \cref{thm:LinearInterpolation} if we knew its values at the interpolation times $t_0,\ldots,t_N$,
however, those are not available.
This is in contrast to the piecewise linear interpolation $\linInt D_t^2\gamma$ of $D_t^2\gamma$ (along $\gamma_h$), which thus can readily be bounded in $L^\infty((0,1))$.
Therefore we will show boundedness of $D_t^2\gamma_h$ by bounding its difference to $\linInt D_t^2\gamma$.
The proof, in turn, approximates this difference by $D_t^2\log_{\gamma_h}\!\gamma$,
the generalization of $\ddot\gamma_h-\ddot\gamma$ from the Euclidean case, for which there is a Galerkin orthogonality and consequently a best-approximation result.

\begin{proposition}[Approximation error for $D_t^2\gamma_h$]\label{thm:approximationErrorSecondDeriv}
For $\gamma$ four times differentiable and $h$ small enough (depending on $\mfd$ and $\gamma$) the cubic spline interpolation satisfies
\begin{align*}
\|D_t^2\log_{\gamma_h}\!\gamma-(D_t^2\gamma_h-\linInt D_t^2\gamma)\|_{L^p}
&\leq c\left(h^2 +\|d(\gamma_h,\gamma)\|_{L^p}\right)\left(\|D_t^2\gamma_h\|_{L^\infty}+1\right),\\
\|D_t^2\gamma_h-\linInt D_t^2\gamma\|_{L^2}
&\leq c\left(h^2 +\|d(\gamma_h,\gamma)\|_{L^2}\right)\left(\|D_t^2\gamma_h\|_{L^\infty}+1\right)
\end{align*}
for a constant $c$ only depending on $\mfd$ and $\gamma$.
\end{proposition}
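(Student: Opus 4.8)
The plan is to prove the two estimates by different mechanisms: the first by a term-by-term expansion of $D_t^2\log_{\gamma_h}\!\gamma$, the second by a Riemannian analogue of the Galerkin orthogonality underlying the best-approximation proof of \cref{thm:EuclideanCase}. For the first estimate I would insert the derivative bounds of \cref{thm:logDerivs} into the chain-rule expansion of $D_t^2\log_{\gamma_h}\!\gamma$ recorded above. Abbreviating $d=d(\gamma_h,\gamma)=|\log_{\gamma_h}\!\gamma|$, the relations $\partial_2\log_{\gamma_h}\!\gamma=\pi_{\gamma\to\gamma_h}+O(d^2)$, $\partial_1\log_{\gamma_h}\!\gamma=-\mathrm{Id}+O(d^2)$ and $\|D^2\log\|\leq Cd$ single out $\pi_{\gamma\to\gamma_h}(D_t^2\gamma)-D_t^2\gamma_h$ as the leading part, while every remaining contribution is of the form $O(d^2)(|D_t^2\gamma|+|D_t^2\gamma_h|)$ or $O(d)(|\dot\gamma|+|\dot\gamma_h|)^2$. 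With the a priori bounds $\|d\|_{L^\infty}\leq Ch$ from \eqref{eqn:closenessCubic} and the velocity bounds \eqref{eqn:velocityCubic}, each remainder is controlled in $L^p$ by $C\|d\|_{L^p}(\|D_t^2\gamma_h\|_{L^\infty}+1)$, the factor $\|D_t^2\gamma_h\|_{L^\infty}+1$ arising solely from the curvature error $O(d^2)$ that multiplies $D_t^2\gamma_h$ in the term $\partial_1\log_{\gamma_h}\!\gamma\,(D_t^2\gamma_h)$. Finally I would replace $\pi_{\gamma\to\gamma_h}(D_t^2\gamma)$ by $\linInt D_t^2\gamma$: since both agree at the nodes, their difference is the interpolation error of the lift $\pi_{\gamma\to\gamma_h}(D_t^2\gamma)\in\lift{\gamma_h}\mfd$, bounded by $Ch^2$ via \cref{thm:propertiesLinearInterpolation} once $\|D_t^2(\pi_{\gamma\to\gamma_h}D_t^2\gamma)\|_{L^p}$ is estimated using $\gamma\in C^4$ and the curvature bounds.

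For the second estimate I would use that $R:=D_t^2\gamma_h-\linInt D_t^2\gamma$ is a continuous, piecewise Jacobi field along $\gamma_h$ — continuous since $\gamma_h\in C^2$ and the interpolant matches nodal values, and Jacobi since $\linInt D_t^2\gamma_h=D_t^2\gamma_h$ — hence an admissible test field. The decisive structural facts are that $u:=\log_{\gamma_h}\!\gamma$ vanishes at every node $t_k$ (because $\gamma_h(t_k)=\gamma(t_k)$) and that the Hermite conditions force $D_tu=0$ at $t=0,1$ (there $\partial_1\log+\mathrm{Id}$ and $\partial_2\log-\mathrm{Id}$ both vanish and $\dot\gamma_h=\dot\gamma$). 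Integrating $\int_0^1(w,D_t^2u)\,\d t$ by parts twice for any continuous piecewise Jacobi field $w$, the terms $(D_tw,u)$ vanish at the nodes because $u(t_k)=0$, the continuous terms $(w,D_tu)$ telescope to the endpoints where $D_tu=0$, and substituting $D_t^2w=-\Rm(w,\dot\gamma_h)\dot\gamma_h$ yields the Galerkin identity
\begin{equation*}
\int_0^1(w,D_t^2\log_{\gamma_h}\!\gamma)\,\d t=-\int_0^1(\Rm(w,\dot\gamma_h)\dot\gamma_h,\log_{\gamma_h}\!\gamma)\,\d t,
\end{equation*}
whose right-hand side is bounded by $C\|w\|_{L^2}\|d(\gamma_h,\gamma)\|_{L^2}$.

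Combining the two is then immediate. Writing $\|R\|_{L^2}^2=(R,R)$ and inserting the first estimate — which expresses $R$ as $D_t^2\log_{\gamma_h}\!\gamma$ up to a remainder of size $c(h^2+\|d\|_{L^2})(\|D_t^2\gamma_h\|_{L^\infty}+1)$ — splits the square into the Galerkin term $(R,D_t^2\log_{\gamma_h}\!\gamma)$ and an inner product of $R$ with that remainder. The former is bounded by $C\|R\|_{L^2}\|d\|_{L^2}$ by the identity above with $w=R$, the latter by $\|R\|_{L^2}$ times the remainder norm; dividing by $\|R\|_{L^2}$ gives the second estimate.

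I expect the main difficulty to lie in the bookkeeping of the first estimate: one must check that each of the many terms generated by \cref{thm:logDerivs} carries either a factor $h^2$ or a factor $\|d(\gamma_h,\gamma)\|_{L^p}$, with no bare $O(1)$ surviving, and that $\|D_t^2\gamma_h\|_{L^\infty}$ is forced only by the single curvature correction to $\partial_1\log$. The most technical sub-step is bounding $\|D_t^2(\pi_{\gamma\to\gamma_h}D_t^2\gamma)\|_{L^p}$, which intertwines the derivatives of $\gamma$ up to fourth order with the parallel transport between the two curves and is where the regularity hypothesis on $\gamma$ is genuinely used.
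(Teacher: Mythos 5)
Your proposal is correct and follows essentially the same route as the paper: peel off the leading term of $D_t^2\log_{\gamma_h}\!\gamma$ using the derivative bounds of \cref{thm:logDerivs} (the paper works with $\partial_2\log_{\gamma_h}\!\gamma(D_t^2\gamma)$ where you use $\pi_{\gamma\to\gamma_h}(D_t^2\gamma)$, a cosmetic difference since both coincide with $D_t^2\gamma$ at the nodes), control the gap to $\linInt D_t^2\gamma$ by the interpolation estimate of \cref{thm:propertiesLinearInterpolation}, and obtain the $L^2$ bound from the twice-integrated-by-parts Galerkin identity with curvature remainder, tested with $D_t^2\gamma_h-\linInt D_t^2\gamma$ itself. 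The only caveat is that, exactly like the paper's own proof, your expansion actually shows smallness of $D_t^2\log_{\gamma_h}\!\gamma+(D_t^2\gamma_h-\linInt D_t^2\gamma)$ rather than of the difference as literally written in the statement; this sign discrepancy is a typo-level issue that is harmless for all subsequent uses.
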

\begin{proof}
We will estimate the difference by adding a zero and using the triangle inequality to split into easier terms.
In particular, we will add
\begin{equation*}
0=-D_t^2\log_{\gamma_h}\!\gamma+D_t(\partial_2\log_{\gamma_h}\!\gamma(\dot\gamma))+D_t(\partial_1\log_{\gamma_h}\!\gamma(\dot\gamma_h))
\end{equation*}
to obtain
\begin{align*}
D_t^2\gamma_h-\linInt D_t^2\gamma
&=-D_t^2\log_{\gamma_h}\!\gamma
+\left[D_t(\dot\gamma_h+\partial_1\log_{\gamma_h}\!\gamma(\dot\gamma_h))\right]
+\left[D_t(\partial_2\log_{\gamma_h}\!\gamma(\dot\gamma))-\linInt(\partial_2\log_{\gamma_h}\!\gamma(D_t^2\gamma))\right]\\
&=-D_t^2\log_{\gamma_h}\!\gamma
+\left[D_t^2\gamma_h+\partial_1\log_{\gamma_h}\!\gamma(D_t^2\gamma_h)\right]
+\left[\partial_1^2\log_{\gamma_h}\!\gamma(\dot\gamma_h,\dot\gamma_h)+\partial_2\partial_1\log_{\gamma_h}\!\gamma(\dot\gamma,\dot\gamma_h)\right]\\
&\quad+\left[\partial_2\log_{\gamma_h}\!\gamma(D_t^2\gamma)-\linInt(\partial_2\log_{\gamma_h}\!\gamma(D_t^2\gamma))\right]
+\left[\partial_1\partial_2\log_{\gamma_h}\!\gamma(\dot\gamma_h,\dot\gamma)+\partial_2^2\log_{\gamma_h}\!\gamma(\dot\gamma,\dot\gamma)\right],
\end{align*}
where we additionally exploited $\linInt D_t^2\gamma=\linInt(\partial_2\log_{\gamma_h}\!\gamma(D_t^2\gamma))$.
We now estimate the bracketed terms.
By \cref{thm:propertiesLinearInterpolation}, we have
\begin{align*}
	\|\partial_2\log_{\gamma_h}\!\gamma(D_t^2\gamma)-\linInt(\partial_2\log_{\gamma_h}\!\gamma(D_t^2\gamma))\|_{L^p}
	&\leq c\;h^{2}\left(\|D_{t}^{2}\partial_2\log_{\gamma_h}\!\gamma(D_t^2\gamma)\|_{L^{p}} + \|\dot\gamma\|_{L^{\infty}}^{2}\|\partial_2\log_{\gamma_h}\!\gamma(D_t^2\gamma)\|_{L^{\infty}}\right).
\end{align*}
We have
\begin{equation*}
D_t^2\partial_2\log_{\gamma_h}\!\gamma(D_t^2\gamma)
=\partial_1^2\partial_2\log_{\gamma_h}\!\gamma(D_t^2\gamma_h,D_t^2\gamma)+r,
\end{equation*}
where $r$ only contains derivatives up to third order of the logarithm evaluated in directions of derivatives of $\gamma$ up to fourth order and of $\gamma_h$ up to first order.
Thus, $r$ can be bounded by a constant (depending on $\mfd$ and $\gamma$).
Together we obtain
\begin{equation*}
\|\partial_2\log_{\gamma_h}\!\gamma(D_t^2\gamma)-\linInt(\partial_2\log_{\gamma_h}\!\gamma(D_t^2\gamma))\|_{L^p}
\leq ch^2(\|D_t^2\gamma_h\|_{L^\infty}+1).
\end{equation*}
By \cref{thm:logDerivs} we furthermore have
\begin{align*}
&\left\|\left[\partial_1^2\log_{\gamma_h}\!\gamma(\dot\gamma_h,\dot\gamma_h)+\partial_2\partial_1\log_{\gamma_h}\!\gamma(\dot\gamma,\dot\gamma_h)\right]+\left[\partial_1\partial_2\log_{\gamma_h}\!\gamma(\dot\gamma_h,\dot\gamma)+\partial_2^2\log_{\gamma_h}\!\gamma(\dot\gamma,\dot\gamma)\right]\right\|_{L^p}\\
&\hspace*{.5\textwidth}\leq c\|d(\gamma_h,\gamma)\|_{L^p}(\|\dot\gamma\|_{L^\infty}+\|\dot\gamma_h\|_{L^\infty})^2
\leq\tilde c\|d(\gamma_h,\gamma)\|_{L^p},\\
&\left\|D_t^2\gamma_h+\partial_1\log_{\gamma_h}\!\gamma(D_t^2\gamma_h)\right\|_{L^p}
\leq c\|d(\gamma_h,\gamma)\|_{L^\infty}\|d(\gamma_h,\gamma)\|_{L^p}\|D_t^2\gamma_h\|_{L^\infty}
\leq\tilde c\|d(\gamma_h,\gamma)\|_{L^p}\|D_t^2\gamma_h\|_{L^\infty}
\end{align*}
so that in summary
\begin{equation*}
\|D_t^2\log_{\gamma_h}\!\gamma-(D_t^2\gamma_h-\linInt D_t^2\gamma)\|_{L^p}
\leq c\left(h^2 +\|d(\gamma_h,\gamma)\|_{L^p}\right)\left(\|D_t^2\gamma_h\|_{L^\infty}+1\right).
\end{equation*}
Next let $w\in\lift{\gamma_h}\mfd$ be piecewise linear in the sense of \cref{def:linInt}, then applying integration by parts twice we obtain
\begin{equation*}
	\int_{0}^{1}(-D_t^2\log_{\gamma_h}\!\gamma,w)\,\d t
	=\sum_{k=1}^{N}
	\int_{t_{k-1}}^{t_{k}}(\log_{\gamma_h}\!\gamma,\Rm(w,\dot\gamma_h)\dot\gamma_h)\,\d t
	\leq c\|d(\gamma_{h},\gamma)\|_{L^2}\|w\|_{L^2}\|\dot\gamma_h\|_{L^\infty}^2
	\leq\tilde c\|d(\gamma_h,\gamma)\|_{L^2}\|w\|_{L^2}.
\end{equation*}
By the above we have
\begin{equation*}
	\int_{0}^{1}(D_t^2\gamma_h-\linInt D_t^2\gamma,w)\,\d t
	\leq c\|w\|_{L^2}
	\left(h^2 +\|d(\gamma_h,\gamma)\|_{L^2}\right)\left(\|D_t^2\gamma_h\|_{L^\infty}+1\right).
\end{equation*}
The choice $w=D_t^2\gamma_h-\linInt D_t^2\gamma$ now finishes the proof.
\end{proof}

\begin{corollary}[Uniform boundedness of $D_t^2\gamma_h$]\label{thm:BoundSecondDeriv}
For $\gamma$ four times differentiable and $h$ small enough (depending on $\mfd$ and $\gamma$) the cubic spline interpolation satisfies
\begin{equation*}
\|D_t^2\gamma_h\|_{L^\infty}
\leq3\|D_t^2\gamma\|_{L^\infty} + ch^{\frac32}
\qquad\text{and}\qquad
\|D_t^2\log_{\gamma_h}\!\gamma\|_{L^2},
\|D_t^2\gamma_h-\linInt D_t^2\gamma\|_{L^2}
\leq ch^2
\end{equation*}
for a constant $c$ only depending on $\mfd$ and $\gamma$.
\end{corollary}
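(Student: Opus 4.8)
The plan is to combine the two estimates from \cref{thm:approximationErrorSecondDeriv} with the already-established quadratic $L^2$-convergence from \cref{thm:quadraticConvergenceI} in order to first bound $\|D_t^2\gamma_h\|_{L^\infty}$, and then bootstrap this $L^\infty$-bound back into the approximation error to upgrade the $L^2$-estimates to the claimed $h^2$-rate.

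First I would bound $\|D_t^2\gamma_h\|_{L^\infty}$ by inserting the known interpolant $\linInt D_t^2\gamma$ and using the triangle inequality,
\begin{equation*}
\|D_t^2\gamma_h\|_{L^\infty}
\leq\|\linInt D_t^2\gamma\|_{L^\infty}+\|D_t^2\gamma_h-\linInt D_t^2\gamma\|_{L^\infty}.
\end{equation*}
The first term is controlled by $2\|D_t^2\gamma\|_{L^\infty}$ via \cref{thm:LinearInterpolation}, which supplies a factor of $2$; to eventually reach the constant $3$ the second term must be shown to be $o(1)$. The difference $D_t^2\gamma_h-\linInt D_t^2\gamma$ lies in $\lift{\gamma_h}\mfd$ and is governed by the piecewise Jacobi equation on each subinterval (since $\linInt D_t^2\gamma_h=D_t^2\gamma_h$ and $\linInt$ is a projection), so I would apply the inverse estimate of \cref{thm:propertiesLinearInterpolation} with $p=\infty$ and $q=2$, giving $\alpha=\tfrac12$ and hence
\begin{equation*}
\|D_t^2\gamma_h-\linInt D_t^2\gamma\|_{L^\infty}
\leq c\left(\|\dot\gamma_h\|_{L^\infty}^{1/2}+h^{-1/2}\right)\|D_t^2\gamma_h-\linInt D_t^2\gamma\|_{L^2}.
\end{equation*}
Now the second estimate of \cref{thm:approximationErrorSecondDeriv} controls the $L^2$-norm by $c(h^2+\|d(\gamma_h,\gamma)\|_{L^2})(\|D_t^2\gamma_h\|_{L^\infty}+1)$, and \cref{thm:quadraticConvergenceI} gives $\|d(\gamma_h,\gamma)\|_{L^2}\leq ch^2$. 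Substituting, the dominant factor is $h^{-1/2}\cdot h^2=h^{3/2}$, so the $L^\infty$-difference is bounded by $ch^{3/2}(\|D_t^2\gamma_h\|_{L^\infty}+1)$.

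The main obstacle, and the delicate point, is that this bound for $\|D_t^2\gamma_h-\linInt D_t^2\gamma\|_{L^\infty}$ still contains $\|D_t^2\gamma_h\|_{L^\infty}$ on its right-hand side, so the inequality for $\|D_t^2\gamma_h\|_{L^\infty}$ is implicit. I would close it by an absorption argument: writing $M=\|D_t^2\gamma_h\|_{L^\infty}$, one arrives at $M\leq 2\|D_t^2\gamma\|_{L^\infty}+ch^{3/2}(M+1)$, and for $h$ small enough the term $ch^{3/2}M$ can be absorbed into the left side (and $ch^{3/2}\cdot\|D_t^2\gamma\|_\infty$ pushes the prefactor $2$ up to at most $3$), yielding $\|D_t^2\gamma_h\|_{L^\infty}\leq 3\|D_t^2\gamma\|_{L^\infty}+ch^{3/2}$, which is the first claimed bound. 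One must of course verify a priori that $M$ is finite — this follows from the regularity established in \cref{thm:wellPosedness} together with \cref{thm:optCond}\eqref{enm:optCond}, so that the absorption is legitimate rather than circular.

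Finally, with $\|D_t^2\gamma_h\|_{L^\infty}$ now bounded by a constant depending only on $\mfd$ and $\gamma$, I would feed this back into \cref{thm:approximationErrorSecondDeriv}. The factor $(\|D_t^2\gamma_h\|_{L^\infty}+1)$ becomes an absolute constant, and using once more $\|d(\gamma_h,\gamma)\|_{L^2}\leq ch^2$ from \cref{thm:quadraticConvergenceI}, the second estimate of that proposition immediately gives $\|D_t^2\gamma_h-\linInt D_t^2\gamma\|_{L^2}\leq ch^2$. The bound on $\|D_t^2\log_{\gamma_h}\!\gamma\|_{L^2}$ then follows from the first estimate of \cref{thm:approximationErrorSecondDeriv} by the triangle inequality, since both $\|D_t^2\gamma_h-\linInt D_t^2\gamma\|_{L^2}$ and the approximation error are now $O(h^2)$.
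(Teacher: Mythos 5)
Your proposal is correct and follows essentially the same route as the paper's own proof: the same triangle inequality via $\linInt D_t^2\gamma$, the same inverse estimate with $p=\infty$, $q=2$ (using $\linInt D_t^2\gamma_h=D_t^2\gamma_h$), the same substitution of \cref{thm:approximationErrorSecondDeriv} and \cref{thm:quadraticConvergenceI}, the same absorption for small $h$, and the same bootstrap back into \cref{thm:approximationErrorSecondDeriv} for the two $L^2$-bounds. Your explicit remark that the a priori finiteness of $\|D_t^2\gamma_h\|_{L^\infty}$ must be checked before absorbing is a sensible addition that the paper leaves implicit.
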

\begin{proof}
Using \cref{thm:LinearInterpolation} we have
\begin{equation*}
\|D_t^2\gamma_h\|_{L^\infty}
\leq\|D_t^2\gamma_h-\linInt D_t^2\gamma\|_{L^\infty}+\|\linInt D_t^2\gamma\|_{L^\infty}
\leq\|D_t^2\gamma_h-\linInt D_t^2\gamma\|_{L^\infty}+2\|D_t^2\gamma\|_{L^\infty}.
\end{equation*}
By \cref{thm:propertiesLinearInterpolation} and $D_t^2\gamma_h=\linInt D_t^2\gamma_h$ we have
\begin{align*}
	\|D_t^2\gamma_h-\linInt D_t^2\gamma\|_{L^{\infty}}&\leq c\left(1+h^{-\frac{1}{2}}\right)\|D_t^2\gamma_h-\linInt D_t^2\gamma\|_{L^{2}}.
\end{align*}
Using \cref{thm:approximationErrorSecondDeriv} and \cref{thm:quadraticConvergenceI} this turns into 
\begin{align*}
	\|D_t^2\gamma_h-\linInt D_t^2\gamma\|_{L^{\infty}}&\leq c\left(h^2+h^{\frac{3}{2}}\right)(\|D_t^2\gamma_h\|_{L^\infty}+1)\;,
\end{align*}
and thus by \cref{thm:LinearInterpolation}
\begin{equation*}
\|D_t^2\gamma_h\|_{L^\infty}
\leq2ch^{\frac32}(1+\|D_t^2\gamma_h\|_{L^\infty})+2\|D_t^2\gamma\|_{L^\infty}.
\end{equation*}
Taking $h$ small enough such that $ch^{\frac32}\leq\frac16$, we arrive at the desired bound for $\|D_t^2\gamma_h\|_{L^\infty}$.
This bound in turn can be applied in \cref{thm:approximationErrorSecondDeriv} together with \cref{thm:quadraticConvergenceI} to yield the desired bound on $\|D_t^2\log_{\gamma_h}\!\gamma\|_{L^2}$ and $\|D_t^2\gamma_h-\linInt D_t^2\gamma\|_{L^2}$.
\end{proof}

\begin{remark}[Boundedness of $\dot\gamma_h$ for linear splines]
Note that a slightly weaker boundedness result than \cref{thm:boundFirstDeriv} for linear splines could have been obtained
following exactly the same argument as the one above for cubic splines.
In that case, one would have used a piecewise constant interpolation operator $\constInt$ on vector fields $v\in\lift\gamma\mfd$
which would simply parallel transport $v(t_{k-1})$ along $\gamma((t_{k-1},t_k))$
and which would satisfy analogous properties to those of $\linInt$ in \cref{thm:propertiesLinearInterpolation}.
In the subsequent estimate of
\begin{equation*}
\dot\gamma_h-\constInt\dot\gamma
=-D_t\log_{\gamma_h}\!\gamma
+\left[\dot\gamma_h+\partial_1\log_{\gamma_h}\!\gamma(\dot\gamma_h)\right]
+\left[\partial_2\log_{\gamma_h}\!\gamma(\dot\gamma)-\constInt(\partial_2\log_{\gamma_h}\!\gamma(\dot\gamma))\right]
\end{equation*}
one would this time exploit the orthogonality property
$\int_0^1(D_t\log_{\gamma_h}\!\gamma,w)\,\d t=0$ for any piecewise constant vector field along $\gamma_h$.
\end{remark}

With the above preparations we can now turn to the final convergence result.
\Cref{thm:BoundSecondDeriv} and \cref{thm:quadraticConvergence} already prove the desired result $\|d(\gamma_h,\gamma)\|_{L^2}\leq ch^4$ in the $L^2$-norm;
for the $L^\infty$-norm (or any other $L^p$-norm) we will proceed by bounding $\|D_t^4\log_{\gamma_h}\!\gamma\|_{L^\infty}$.

\begin{corollary}[Quartic convergence]
For $\gamma$ four times differentiable, $p\in[1,\infty]$, and $h$ small enough (depending on $\mfd$ and $\gamma$) the cubic spline interpolation satisfies
\begin{equation*}
\|D_t^2\log_{\gamma_h}\!\gamma\|_{L^p}\leq ch^2,
\qquad
\|d(\gamma_h,\gamma)\|_{L^p}
\leq ch^4,
\end{equation*}
where the constant $c>0$ only depends on $\mfd$, $\gamma$, and $p$.
\end{corollary}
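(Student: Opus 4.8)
The plan is to first dispatch the $L^2$ estimate and then reduce every $L^p$ bound to the $L^\infty$ case. For $p=2$ the second inequality is immediate: \cref{thm:BoundSecondDeriv} gives $\|D_t^2\log_{\gamma_h}\!\gamma\|_{L^2}\leq ch^2$, and summing the interval estimate of \cref{thm:quadraticConvergence} with $p=2$ over all $I_i$ yields $\|d(\gamma_h,\gamma)\|_{L^2}\leq ch^2\|D_t^2\log_{\gamma_h}\!\gamma\|_{L^2}\leq ch^4$. Since $[0,1]$ has unit measure, $\|\cdot\|_{L^p}\leq\|\cdot\|_{L^\infty}$ for every $p\in[1,\infty]$, so both claimed bounds follow once they are established in the $L^\infty$-norm. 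To reach the $L^\infty$-bound I would mimic the Euclidean argument of \cref{thm:perturbedSpline} applied to the transported error $g=\pi_{\gamma_h}\log_{\gamma_h}\!\gamma:[0,1]\to T_{\gamma(0)}\mfd$, a curve in a fixed tangent space for which $|g(t)|=d(\gamma_h(t),\gamma(t))$ by the isometry of parallel transport (valid for $h$ small by \cref{thm:quadraticConvergenceII}). Its interpolation data vanish, $g(t_i)=0$, and using $\dot g=\pi_{\gamma_h}D_t\log_{\gamma_h}\!\gamma$ from \cref{thm:rules}\eqref{enm:transportedDerivative} together with $\partial_1\log_pq=-\mathrm{Id}$, $\partial_2\log_pq=\mathrm{Id}$ at coinciding points and the Hermite conditions $\dot\gamma_h=\dot\gamma$ at $t=0,1$, one finds $\dot g(0)=\dot g(1)=0$. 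Thus $g$ satisfies the hypotheses of \cref{thm:perturbedSpline} provided I can bound $\partial_t^4g$ on each subinterval.

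The heart of the proof is therefore the uniform bound $\|\partial_t^4g\|_{L^\infty}=\|D_t^4\log_{\gamma_h}\!\gamma\|_{L^\infty}\leq C$. I would obtain it by expanding $D_t^4\log_{\gamma_h}\!\gamma$ with the covariant chain rule and tracking, for each term, which covariant derivative of $\log$ it carries and which derivatives of $\gamma$ and $\gamma_h$ it contracts. All factors built from $\gamma$ are bounded since $\gamma$ is four times differentiable, and the velocity $\dot\gamma_h$ and second derivative $D_t^2\gamma_h$ are bounded by \eqref{eqn:velocityCubic} and \cref{thm:BoundSecondDeriv}. The two dangerous contributions are the highest-order terms in $\gamma_h$. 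The fourth derivative $D_t^4\gamma_h$ enters only paired with the first-order factor $\partial_1\log_{\gamma_h}\!\gamma$, and by the optimality condition \cref{thm:optCond}\eqref{enm:optCond} it equals $-\Rm(D_t^2\gamma_h,\dot\gamma_h)\dot\gamma_h$, hence is bounded. The third derivative $D_t^3\gamma_h$ enters only paired with second-order covariant derivatives of $\log$; by \cref{thm:logDerivs} these are $O(d(\gamma_h,\gamma))=O(h^2)$ thanks to \cref{thm:quadraticConvergenceII}, while a Landau--Kolmogorov interpolation inequality on the length-$h$ interval $I_i$ (using that $D_t^2\gamma_h$ and $D_t^4\gamma_h$ are bounded) gives $\|D_t^3\gamma_h\|_{L^\infty(I_i)}\leq Ch^{-1}$, so the product is $O(h)$. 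Verifying that $D_t^3\gamma_h$ indeed never appears against a zeroth- or first-order $\log$-factor is the delicate bookkeeping step, and this is the main obstacle: it requires organizing the iterated covariant chain rule so that every gain of one derivative on $\gamma_h$ beyond second order is matched either by the optimality reduction (for order four) or by a small second-order factor of $\log$ (for order three).

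Granting this, \cref{thm:perturbedSpline} yields $\|d(\gamma_h,\gamma)\|_{L^\infty}=\|g\|_{L^\infty}\leq ch^4$, which is the second claimed bound for $p=\infty$ and hence for all $p$. For the first claim I would feed this back into an interpolation inequality: on each $I_i$ the rescaled error $G(\tau)=g(t_{i-1}+h\tau)$ satisfies $\|G\|_{L^\infty}\leq ch^4$ and $\|G^{(4)}\|_{L^\infty}=h^4\|\partial_t^4g\|_{L^\infty(I_i)}\leq Ch^4$, so the bounded-interval estimate $\|G''\|_{L^\infty}\leq C(\|G\|_{L^\infty}+\|G^{(4)}\|_{L^\infty})$ gives $\|\partial_t^2g\|_{L^\infty(I_i)}=h^{-2}\|G''\|_{L^\infty}\leq Ch^2$. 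Since $|\partial_t^2g|=|D_t^2\log_{\gamma_h}\!\gamma|$, this proves $\|D_t^2\log_{\gamma_h}\!\gamma\|_{L^\infty}\leq ch^2$, and once more $\|\cdot\|_{L^p}\leq\|\cdot\|_{L^\infty}$ settles all remaining exponents.
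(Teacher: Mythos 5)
Your proposal is correct and follows essentially the same route as the paper: the same choice $g=\pi_{\gamma_h}\log_{\gamma_h}\!\gamma$ fed into \cref{thm:perturbedSpline}, the same treatment of the two critical terms (the Euler--Lagrange identity for $D_t^4\gamma_h$, and the pairing of $D_t^3\gamma_h=O(h^{-1})$ with second derivatives of $\log$ of size $O(d(\gamma_h,\gamma))=O(h^2)$), and the same Gagliardo--Nirenberg step for the $D_t^2\log_{\gamma_h}\!\gamma$ bound. The bookkeeping you flag as the main obstacle does work out exactly as you anticipate (and is simply asserted in the paper): since $D_t\log_{\gamma_h}\!\gamma=\partial_1\log_{\gamma_h}\!\gamma(\dot\gamma_h)+\partial_2\log_{\gamma_h}\!\gamma(\dot\gamma)$, a term of $D_t^4\log_{\gamma_h}\!\gamma$ containing $D_t^3\gamma_h$ must spend two of the remaining three derivatives on the argument $\dot\gamma_h$ and hence exactly one on the $\log$-factor, so it necessarily carries a second-order derivative of $\log$.
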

\begin{proof}
We just show the $L^\infty$ estimate, which is the strongest among all $p$,
and apply \cref{thm:perturbedSpline} to $g=\pi_{\gamma_h}\log_{\gamma_h}\!\gamma$ to obtain the desired result.
To this end it remains to show uniform boundedness of $|\partial_t^4g|=|D_t^4\log_{\gamma_h}\!\gamma|$ on each interval $I_k=(t_{k-1},t_k)$. Now
\begin{equation*}
D_t^4\log_{\gamma_h}\!\gamma
=\partial_1\log_{\gamma_h}\!\gamma(D_t^4\gamma_h)
+c_1\partial_1^2\log_{\gamma_h}\!\gamma(D_t^3\gamma_h,D_t\gamma)
+c_2\partial_2\partial_1\log_{\gamma_h}\!\gamma(D_t^3\gamma_h,D_t\gamma_h)
+r
\end{equation*}
with some integers $c_1,c_2$, where $r$ is a term containing derivatives of the logarithm up to fourth order
evaluated in directions of derivatives of $\gamma$ up to fourth order and of $\gamma_h$ up to second order.
Thus, $|r|$ can be bounded by a constant (depending on $\mfd$ and $\gamma$).
Consequently, exploiting \cref{thm:logDerivs} it remains to show uniform boundedness of $|D_t^4\gamma_h|$ and $d(\gamma_h,\gamma)|D_t^3\gamma_h|$,
where by \cref{thm:quadraticConvergenceII} the latter is no larger than $h^2|D_t^3\gamma_h|$ up to a constant factor
(in fact, below we will even obtain boundedness of $h|D_t^3\gamma_h|$).
By \cref{thm:optCond}\eqref{enm:optCond} as well as \cref{thm:aPrioriBounds}\eqref{enm:aPrioriBoundsCubic} and \cref{thm:BoundSecondDeriv} we have
\begin{equation*}
|D_t^4\gamma_h|=|\Rm(D_t^2\gamma_h,\dot\gamma_h)\dot\gamma_h|\leq C
\end{equation*}
for a constant $C>0$ depending on $\mfd$ and $\gamma$.
Using the scaled Gagliardo--Nirenberg--Sobolev inequality or an inverse inequality, we then obtain
\begin{equation*}
\|D_t^3\gamma_h\|_{L^\infty(I_k)}
\leq c\|D_t^4\gamma_h\|_{L^\infty(I_k)}^{1/2}\|D_t^2\gamma_h\|_{L^\infty(I_k)}^{1/2}
+ch^{-1}\|D_t^2\gamma_h\|_{L^\infty(I_k)}
\leq ch^{-1},
\end{equation*}
which finally proves boundedness $|\partial_t^4g|<C$ for a constant $C$ only depending on $\mfd$ and $\gamma$.

The estimate of $\|D_t^2\log_{\gamma_h}\!\gamma\|_{L^\infty}$ now follows again from the the scaled Gagliardo--Nirenberg--Sobolev inequality.
Indeed, there exists $c>0$ such that on each interval $I_k=(t_{k-1},t_k)$
\begin{equation*}
\|D_t^2\log_{\gamma_h}\!\gamma\|_{L^\infty(I_k)}
\leq c\|D_t^4\log_{\gamma_h}\!\gamma\|_{L^\infty(I_k)}^{1/2}\|\log_{\gamma_h}\!\gamma\|_{L^\infty(I_k)}^{1/2}+ch^{-2}\|\log_{\gamma_h}\gamma\|_{L^\infty(I_k)}.
\end{equation*}
Using $\|d(\gamma_h,\gamma)\|_{L^\infty}\leq Ch^4$ as well as the uniform boundedness of $D_t^4\log_{\gamma_h}\!\gamma$, the estimate turns into
\begin{equation*}
\|D_t^2\log_{\gamma_h}\!\gamma\|_{L^\infty(I_k)}
\leq c(\sqrt C\|D_t^4\log_{\gamma_h}\!\gamma\|_{L^\infty(I_k)}^{1/2}+C)h^{2}.
\qedhere
\end{equation*}
\end{proof}

\section*{Acknowledgements}
B.\,Wirth's research was supported by the Alfried Krupp Prize for Young University Teachers awarded by the Alfried Krupp von Bohlen und Halbach-Stiftung.
He also acknowledges support by the Deutsche Forschungsgemeinschaft (DFG, German Research Foundation)
under Germany's Excellence Strategy EXC 2044--390685587, Mathematics Münster: Dynamics--Geometry--Structure,
and under the Collaborative Research Centre 1450, InSight, University of M\"unster.

\bibliographystyle{plain}
\bibliography{notes}

\begin{thebibliography}{1}

\bibitem{Ci78}
Philippe~G. Ciarlet.
\newblock {\em The finite element method for elliptic problems}.
\newblock North-Holland Publishing Co., Amsterdam-New York-Oxford, 1978.
\newblock Studies in Mathematics and its Applications, Vol. 4.

\bibitem{dB74}
Carl de~Boor.
\newblock Bounding the error in spline interpolation.
\newblock {\em SIAM Review}, 16(4):531--544, 1974.

\bibitem{dB76}
Carl de~Boor.
\newblock A bound on the {$L_{\infty }$}-norm of {$L_{2}$}-approximation by
  splines in terms of a global mesh ratio.
\newblock {\em Math. Comp.}, 30(136):765--771, 1976.

\bibitem{DoDuWa75}
Jim Douglas, Jr., Todd Dupont, and Lars Wahlbin.
\newblock Optimal {$L_{\infty }$} error estimates for {G}alerkin approximations
  to solutions of two-point boundary value problems.
\newblock {\em Math. Comp.}, 29:475--483, 1975.

\bibitem{hardering_diss}
Hanne Hardering.
\newblock {\em Intrinsic Discretization Error Bounds for Geodesic Finite
  Elements}.
\newblock PhD thesis, Freie Universit{\"a}t Berlin, 2015.

\bibitem{HeRuWi19}
Behrend Heeren, Martin Rumpf, and Benedikt Wirth.
\newblock Variational time discretization of {R}iemannian splines.
\newblock {\em IMA J. Numer. Anal.}, 39(1):61--104, 2019.

\bibitem{karcher:1977}
H.~Karcher.
\newblock Riemannian center of mass and mollifier smoothing.
\newblock {\em Comm. Pure Appl. Math.}, 30:509--541, 1977.

\end{thebibliography}

\end{document}